\newtheorem{theorem}{Theorem}
\newtheorem{lemma}[theorem]{Lemma}
\newtheorem{remark}[theorem]{Remark}
\newtheorem{mytheorem}{Theorem}
\newcommand{\rn}[1]{\mathbb{R}^{#1}}
\newcommand{\re}{ \mathbb{R}}
\newcommand{\beq}{\begin{equation}}
\newcommand{\bea}[1]{\begin{array}{#1} }
\newcommand{\eeq}{ \end{equation}}
\newcommand{\ea}{ \end{array}}
\newcommand{\ep}{\epsilon}
\newcommand{\es}{\emptyset}
\newcommand{\al}{\alpha}
\newcommand{\ga}{\gamma}
\newcommand{\de}{\delta}
\newcommand{\ds}{\displaystyle}
\newcommand{\ts}{\textstyle}
\newcommand{\ran}{\rangle}
\newcommand{\lan}{\langle}
\newcommand{\la}{\lambda}
\newcommand{\La}{\Lambda}
\newcommand{\ar}{\partial}
\newcommand{\si}{\sigma}
\newcommand{\om}{\omega}
\newcommand{\Om}{\Omega}
\newcommand{\be}{\beta}
\newcommand{\ph}{\phi}
\newcommand{\he}{\theta}
\newcommand{\Ph}{\Phi}
\newcommand{\hs}[1]{\mbox{$ \hspace{#1}$}}
\newcommand{\sem}{\setminus}
\newcommand{\ze}{\zeta}
\newcommand{\ti}{\tilde}
\renewcommand*{\backref}[1]{}
\renewcommand*{\backrefalt}[4]{%
 \ifcase #1 (Not cited.)%
   \or        (Cited on page~#2.)%
    \else      (Cited on pages~#2.)%
    \fi}  
\begin{document} 

\title{On a  Theorem of  Wolff  Revisited} 

\dedicatory{Dedicated to the memory of Thomas Wolff}

\author[M. Akman]{Murat Akman}
\address{{\bf Murat Akman}\\
Department of Mathematical Sciences, University of Essex\\
Wivenhoe Park, Colchester, Essex CO4 3SQ, UK
} \email{murat.akman@essex.ac.uk}

\author[J. Lewis]{John Lewis}
\address{{\bf John Lewis} \\ Department of Mathematics \\ University of Kentucky \\ Lexington, Kentucky, 40506}
\email{johnl@uky.edu}

\author[A. Vogel]{Andrew Vogel}
\address{{\bf Andrew Vogel}\\ Department of Mathematics, Syracuse University \\  Syracuse, New York 13244}
\email{alvogel@syracuse.edu}


\keywords{gap series,  $p$-harmonic measure, $p$-harmonic function, radial  limits,  Fatou theorem} 
\subjclass[2010]{35J60,31B15,39B62,52A40,35J20,52A20,35J92}
\begin{abstract}
  We   study  $p$-harmonic functions,  $ 1 < p\neq 2 < \infty$, in     $ \rn{2}_+  =  \{ z = x + i y : y > 0,  - \infty < x < \infty \}  $  and   
  $ B ( 0, 1 )   =   \{ z :   |z| < 1 \}$.  We first  show    
  for  fixed   $ p$, $1 <  p\neq 2  <   \infty$, and for all large integers  $N\geq N_0$  that   there  exists  $p$-harmonic function,
  $ V = V ( r  e^{i\he} )$,   which is $  2\pi/N  $ 
  periodic in the $ \he $ variable,    and  Lipschitz continuous on  $  \ar  B (0, 1)$ with   Lipschitz norm
   $\leq  c N$ on $  \ar B ( 0, 1 )$ satisfying    $V(0)=0$ and    $    c^{-1}  \leq    
  \int_{-\pi}^{\pi} V  (  e^{i\he} )  d \he    \leq  c$.  
 In case $2<p<\infty $ we  give   a   more or less explicit  example of  $V$ and our work  is an extension of a result of  Wolff in \cite[Lemma 1]{W}  on    
$  \rn{2}_+ $  to   $ B (0, 1).$  Using our first result, we extend  the work of Wolff in \cite{W} on failure of Fatou type theorems  for   $ \rn{2}_+ $  to $ B (0, 1)$ for $p$-harmonic functions, $1< p\neq 2<\infty$.  Finally, we  also  outline   the    modifications  needed  for extending the   work of  Llorente,  Manfredi,  and Wu in  \cite{LMW}
 regarding failure of subadditivity of  $p$-harmonic measure on $ \ar \rn{2}_+   $   to  $\ar  B (0, 1)$.    
 \end{abstract}

\maketitle
\setcounter{tocdepth}{1}
\tableofcontents

\section{Introduction}  
\label{sec1}    
Throughout this  paper we mix  complex and real notation,  so   $ z = x+iy$ and  
$\bar z =  x - i y$ whenever $x, y \in \re$ where $i =  \sqrt{-1}$.   Moreover, we let $ \rn{2}_+
= \{   z =   x + i y : y > 0 \}  $   and  $  B ( z_0, \rho  )  =  \{ z : | z - z_0 |   <    \rho \} $  
whenever  $ z_0 \in \rn{2}$ and $\rho  > 0. $   We  consider  for    fixed $ p$, $1 < p\neq 2 < \infty$,   
weak solutions $u$   (called  $p$-harmonic functions)    to   $p$-Laplace equation
\begin{align}
 \label{1.1} 
\mathcal{L}_p u   :=   \nabla  \cdot  ( | \nabla u |^{p-2}   \nabla u )  =  0 
\end{align}
 on  $ B  (0, 1 ) $ or  $  \rn{2}_+ $ (see  section  \ref{sec2} for  definition of  a  $p$-harmonic function).  
 In  \eqref{1.1}, $  \nabla  u$  denotes the  gradient of $u$  and  $  \nabla \cdot $  
 denotes  the divergence operator.       In   1984  Wolff brilliantly  used ideas  from  harmonic analysis and  PDE  to  prove Fatou theorem fails for $p$-harmonic functions for $2<p<\infty$. 
\begin{theorem}[{\cite[Theorem 1]{W}}]
\label{thm1.1} 
 If  $ p  > 2,  $   then there exist   bounded  weak  solutions of  $ \mathcal{L}_p \hat u  = 0 $ in 
  $  \rn{2}_+  $  such that $\{ x \in \re  : {\ds  \lim_{ y \to 0 } \hat  u ( x + i  y ) }$ exists\}   
  has  Lebesgue measure  zero.  Also    there exist positive  bounded weak solutions of $ \mathcal{L}_p \hat v = 0$  such that  $ 
\{ x \in \re  :  {\ds \limsup_{y \to 0 } \,   \hat v   (x + i  y)  > 0 \}}  $  has Lebesgue measure $0$.    
\end{theorem}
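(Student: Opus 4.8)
The plan is to follow Wolff's strategy: reduce Theorem~\ref{thm1.1} to a quantitative \emph{building block} --- a single $p$-harmonic function on a periodic strip with a ``mean value defect'' that is impossible when $p=2$ --- and then iterate rescaled copies of it across a lacunary sequence of heights shrinking to the real axis, in analogy with lacunary Fourier series in $L^\infty(\re)$ that fail to have radial limits a.e. So the first step is to prove the following half-plane precursor of the function $V$ described in the abstract: for every $p>2$ there is $N_0=N_0(p)$ so that for each integer $N\ge N_0$ there is a bounded $p$-harmonic function $W$ on $\{0<y<1\}$, periodic in $x$ with period $1/N$, with oscillatory boundary trace of amplitude $\asymp 1$, whose average over a period is pinned between two positive constants, and whose trace on the midline $\{y=\tfrac{1}{2}\}$ still oscillates by at least a fixed amount $\de_0=\de_0(p)>0$. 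The role of $p>2$ is exactly this last clause: for $p=2$ a $1/N$-periodic function on a strip collapses to its mean like $e^{-2\pi N}$ over an $O(1)$ height, so no such $W$ exists, whereas for $p>2$ the nonlinearity lets a fixed proportion of the oscillation survive the descent. For $p>2$ this is Wolff's \cite[Lemma~1]{W}, for which a more or less explicit ansatz can be written down --- a separation-of-variables profile, the oscillation carried in $x$ and the $p$-Laplacian forcing only a mild, $N$-independent damping --- and one verifies the amplitude, mean, and survival bounds by a continuity/perturbation argument in $N$ together with the strong maximum principle, interior gradient estimates (e.g.\ Tolksdorf \cite{T}), and Harnack's inequality for $p$-harmonic functions.

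Next, given the building block, one constructs $\hat u$ by stacking rescaled (and suitably reflected) copies of $W$ in the successive strips $S_k=\{y_{k+1}<y<y_k\}$ for a rapidly decreasing sequence $y_k\downarrow0$, with the ratios $y_{k+1}/y_k$ and the scales $N_k\to\infty$ chosen lacunarily, matching traces across the interfaces $\{y=y_k\}$. Because $p$-harmonic functions do not superpose one does not add the blocks but instead solves, inductively, a Dirichlet problem in each new strip whose top data is the trace produced by the previous stage and whose bottom data is a prescribed profile oscillating at scale $N_k$; one must then check, via the comparison principle and gradient estimates, that the modification at scale $k$ perturbs everything above height $y_k$ by at most $\e_k$ with $\sum_k\e_k<\infty$, so that $\hat u=\lim_k\hat u_k$ exists, is bounded, and is a genuine weak solution of $\mathcal{L}_p\hat u=0$ in $\rn{2}_+$. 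This convergence-and-weak-solution bookkeeping, together with the building block, is the technical core. By construction, writing $h_k$ for the midheight of $S_k$, the trace $\hat u(\cdot+ih_k)$ has oscillation $\ge\de_0$ on every period interval of the $k$-th generation.

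The null-set claim is then a divergence-of-lacunary-series (Borel--Cantelli) argument. The construction makes the traces $\hat u(\cdot+ih_k)$ behave, across $k$, like the partial sums of a lacunary series: passing from scale $k$ to $k+1$ changes the trace by an amount of magnitude $\ge\de_0$ on a subset of relative density $\ge\eta_0>0$ of every generation-$k$ period interval, and --- because the heights $y_k$ and the scales $N_k$ are lacunarily separated --- these increments are \emph{quasi-independent}: conditioned on the trace above height $y_k$, the location of the large-increment set at the next scale is essentially equidistributed in each generation-$k$ interval. A Borel--Cantelli/Paley--Zygmund argument then shows that for a.e.\ $x$ the sequence $k\mapsto\hat u(x+ih_k)$ fails to converge, and since $h_k\downarrow0$ this forces $\lim_{y\to0}\hat u(x+iy)$ to exist only on a set of measure zero. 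For the positive statement one runs the same scheme but keeps $\hat v$ between two fixed positive constants throughout and uses the block multiplicatively: at each scale, on a set of relative density $\ge\eta_0$ in every generation-$k$ interval, $\hat v(\cdot+ih_{k+1})\le\la\,\hat v(\cdot+ih_k)$ with $\la=\la(p)<1$ fixed --- possible precisely because the period-average of the block stays bounded below, so mass is redistributed rather than destroyed. Borel--Cantelli then gives $\liminf_{y\to0}\hat v(x+iy)=0$ for a.e.\ $x$, and an additional maximum-principle argument, built into the choice of blocks, upgrades this to $\lim_{y\to0}\hat v(x+iy)=0$ a.e., so $\{x:\limsup_{y\to0}\hat v(x+iy)>0\}$ is null.

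The hard part, in my view, is the building block itself: producing, for $p>2$, a periodic $p$-harmonic profile on a strip whose oscillation is \emph{not} exponentially damped over an $O(1)$ height while its period-average stays pinned between two positive constants, and showing that the survival rate $\de_0$ and the average bounds are uniform in the period $1/N$ for all large $N$. The secondary difficulty is the quasi-independence needed for Borel--Cantelli --- that a $p$-harmonic trace cannot ``see'' the fine structure of a block inserted far below it except through its average --- which is where the localization and Harnack estimates for the $p$-Laplacian carry the load. Everything else (the stacking, the weak-solution convergence, and the upgrade to a genuine vertical limit in the positive case) is careful but essentially routine use of the comparison principle, interior gradient estimates, and Harnack's inequality.
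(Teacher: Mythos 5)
Your high-level plan (a quantitative building block that is impossible at $p=2$, iteration across a lacunary family of scales, an a.e.\ divergence argument) does track Wolff's strategy, which the paper follows in \S\ref{sec4} when proving the disk analogue, Theorem~\ref{thmB}. But your building block, as stated, does not exist, and this is not a cosmetic slip --- the wrong mechanism propagates through the whole proposal. You ask for a $1/N$-periodic, bounded $p$-harmonic $W$ on a strip of height $1$ whose trace on the midline $\{y=1/2\}$ ``still oscillates by at least a fixed amount $\de_0(p)>0$,'' and you attribute this survival to $p>2$. Lemma~\ref{lem2.5} (Wolff's Lemma~1.3) rules this out for \emph{every} $p\in(1,\infty)$: a bounded $\eta$-periodic $p$-harmonic function on $\rn{2}_+$ converges to its limiting constant like $\exp(-y/(c\eta))$, so with $\eta=1/N$ the oscillation at $y=1/2$ is $O(e^{-cN})$, exponentially small uniformly over scales. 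What survives for $p\neq 2$ is not the oscillation but the \emph{mean}: Wolff's Lemma~1 (Theorem~\ref{thm1.2}, re-proved by hand in \S\ref{sec3}) gives a bounded, $1$-periodic, Lipschitz $p$-harmonic $\Phi$ with $\Phi(x+iy)\to 0$ as $y\to\infty$ yet $\int_0^1\Phi(x)\,dx\neq 0$. For $p=2$ linearity forces the boundary mean to equal the limit at $\infty$; for $p\neq 2$ it need not, and this mean-value defect is the nonlinear fingerprint. Correspondingly, your claim that ``$\hat u(\cdot+ih_k)$ has oscillation $\ge\de_0$ on every period interval of the $k$-th generation'' is false, and the Borel--Cantelli step you build on it collapses.

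Secondarily, your strip-by-strip Dirichlet stacking and ``quasi-independence'' heuristic are not how the paper (or Wolff) runs the iteration. The paper prescribes a single boundary datum --- a damped lacunary series $\si=R+\sum a_jL_j\ti\Ph_j$ on $\ar B(0,1)$ (on $\re$ in Wolff's case) --- takes one Dirichlet extension, and relies on the localization Lemma~\ref{lem4.1} (Wolff's Lemma~1.6): the extension of $qf+g$, with $q$ of period $2\pi/\nu$, equals $f\hat q+g$ up to $\ep$ for $1-r<A\nu^{-\al}$ and equals $\hat g$ up to $\ep$ for $1-r>A\nu^{-\al}$, with $\al=1-2/p$. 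That is the precise version of your quasi-independence, and it is combined with a Menshov--Rademacher type $L^2$ maximal inequality for lacunary series (Lemma~\ref{lem4.2}) and the combinatorics of the damping factors $L_j$ (Lemmas~\ref{lem4.3} and \ref{lem4.4}), not with Paley--Zygmund. Your multiplicative treatment of the positive example $\hat v$ is closer in spirit to the product construction used for Theorem~\ref{thmC} than to the additive damping used for Theorem~\ref{thmB}. Replacing your building block with the correct mean-value-defect block, and routing the divergence argument through the localization lemma and the lacunary $L^2$ bound, would bring the proposal in line with what the paper actually does.
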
    

The  key to his proof  and  the only obstacle in extending  Theorem \ref{thm1.1} to $ 1 < p < \infty $    
was the  validity of  the  following  theorem for $ 1 < p < 2, $   stated  as  Lemma 1 in  \cite{W}.  
\begin{theorem}[{\cite[Lemma 1]{W}}]
\label{thm1.2}   
 If  $ p > 2$  there exists a  bounded  Lipschitz function $ \Ph $  on  
the closure of $  \rn{2}_+ $  with   $  \Ph ( z + 1 )  =  \Ph ( z )$ for $z \in \rn{2}_+$,   $\mathcal{L}_p  \Phi  = 0   $ 
weakly  on $  \rn{2}_+$, $   \int_{ (0,1) \times (0, \infty) }  | \nabla  \Ph |^p   \, dx dy   <  \infty,  $  
and    
\begin{align}
\label{1.2} 
  {\ds   \lim_{y \to \infty} \Phi ( x + i  y ) = 0}\, \, \mbox{for}\, \, x \in \re,   \quad  \mbox{but}\quad     \int_0^1  \,  \Ph ( x )  dx   \neq 0.   
\end{align}
\end{theorem}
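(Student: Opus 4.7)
The plan is to build $\Phi$ as the bounded $p$-harmonic extension to $\rn{2}_+$ of a cleverly chosen $1$-periodic boundary datum $g$, and to isolate the role of the nonlinearity. The key algebraic fact is that integrating $\mathcal{L}_p u = 0$ in $x$ over one period yields the conservation law
\[
\int_0^1 |\nabla u(x,y)|^{p-2}\, u_y(x,y)\,dx \equiv \mathrm{const},
\]
which coincides with $\partial_y \int_0^1 u(x,y)\,dx$ only in the linear case $p=2$. Hence for $p\neq 2$ the vertical mean is not a conserved quantity, and there is no a priori obstruction to the two requirements $\int_0^1 \Phi(x,0)\,dx \neq 0$ and $\Phi(x,y)\to 0$ as $y\to\infty$ holding simultaneously.

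Concretely, for continuous $1$-periodic $g$ and $T>0$, let $U_{g,T}$ minimize $\int |\nabla u|^p$ over $W^{1,p}$ functions on the cylinder $(0,1)_{\mathrm{per}}\times(0,T)$ with Dirichlet data $g$ on $\{y=0\}$ and $0$ on $\{y=T\}$. Comparison with constant barriers gives $\|U_{g,T}\|_\infty \leq \|g\|_\infty$ uniformly in $T$, and Tolksdorf's boundary regularity \cite{T} provides uniform $C^{1,\alpha}$ estimates on compact subsets of the closure of $\rn{2}_+$. Extracting a subsequential limit $T\to\infty$ produces a Lipschitz, $1$-periodic, bounded $p$-harmonic function $U_g$ on the closed upper half-plane of finite global $p$-energy. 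A Harnack/oscillation argument combined with the energy bound shows $U_g(x,y)\to M[g]$ uniformly in $x$ as $y\to\infty$ for some constant $M[g]$. Using $p$-harmonicity of $-U_g$, $\lambda U_g$, and $U_g+c$, the functional $M$ on $C^0_{\mathrm{per}}$ is continuous, $1$-homogeneous and odd in $g$, and satisfies the shift relation $M[g+c]=M[g]+c$ for constants $c\in\re$.

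The theorem then reduces to producing $g$ with $\int_0^1 g\,dx \neq 0$ and $M[g]=0$, for then $\Phi:=U_g$ does the job. By the shift relation, this is equivalent to producing a mean-zero $g$ with $M[g]\neq 0$: one sets $g' := g - M[g]$ and takes $\Phi := U_{g'}$. For $p=2$ a Poisson-kernel computation on the cylinder gives $M[g] = \int_0^1 g\,dx$, so $M$ vanishes precisely on the mean-zero functions --- which is exactly why the statement must fail in the linear case. The main obstacle is thus to prove that for $p\neq 2$ the nonlinear functional $M$ does not vanish identically on the mean-zero subspace. I would attack this either by a quantitative asymptotic expansion of the $p$-harmonic extension of a mean-zero datum such as $\cos(2\pi x)+\alpha\cos(4\pi x)$ around a suitable linear profile (whose linearization is the constant-coefficient elliptic operator $\partial_{xx}+(p-1)\partial_{yy}$, hence amenable to explicit Fourier computation), producing a non-vanishing higher-order-in-amplitude correction to the identity $M=\int$; or, following the announcement in the paper of a ``more or less explicit example'' for $2<p<\infty$, via an ODE-based construction using a separated-variable Ansatz $\Phi(x,y) = f(y) + h(y)\cos(2\pi x) + \cdots$, which reduces the matter to a coupled nonlinear system for the coefficient functions solvable by shooting. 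Certifying this nonlinearity-driven gap between $M$ and the linear functional $g\mapsto\int_0^1 g$ is the crux; once it is in hand, Steps 1--2 deliver $\Phi$ with all the properties stated in the theorem.
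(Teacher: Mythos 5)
Your reduction is sound and nicely organized: defining the limiting-value functional $M$ on $1$-periodic data, establishing its $1$-homogeneity, oddness and the shift relation $M[g+c]=M[g]+c$, and observing that the theorem is equivalent to exhibiting a mean-zero $g$ with $M[g]\neq 0$ --- this is a correct reformulation, and you are right that for $p=2$ the conservation law forces $M=\int_0^1 g\,dx$ so the reformulation detects exactly the obstruction. However, the proof has a genuine gap exactly where you flag it: you do not actually prove that $M$ fails to coincide with the mean for $p>2$, you only sketch two programs (a Fourier perturbation around a linear profile, or an ODE shooting Ansatz) for attacking that question. Both sketches are plausible starting points (the first is close in spirit to Wolff's original Fredholm-perturbation argument) but neither is carried to the point where one could check it, so the central nonlinear fact on which everything hinges is unestablished.

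The paper's ``hands on'' proof for $p>2$ closes precisely this gap by a quantitative scaling argument rather than a perturbative one, and it is worth contrasting. Take boundary data a $1$-periodic bump of width $\sim t$ with $t$ small; its mean is $\sim t$. The paper compares the $p$-harmonic extension $\hat u$ of a single bump with the homogeneous $p$-harmonic function $v(re^{i\theta})=(t/r)^{\hat\lambda}\hat\phi(\theta-\pi/2)$, where $\hat\lambda=\hat\lambda(p)$ is the exponent in \eqref{3.5} obtained from Krol's separation-of-variables ODE, and deduces via the boundary Harnack inequality (Lemma \ref{lem2.4}) that $\hat u(i)\approx t^{\hat\lambda}$ and hence $\int_0^1\Psi(x+i)\,dx\gtrsim t^{\hat\lambda}$ for the periodized extension $\Psi$, while $\int_0^1\Psi(x+is)\,dx\lesssim t$ for small $s$. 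The key algebraic input is \eqref{3.13}: $\hat\lambda(p)<1$ exactly when $p>2$. Thus $t^{\hat\lambda}\gg t$ for $t$ small, and since the limit $\xi=\lim_{y\to\infty}\Psi$ cannot simultaneously sit near $t^{\hat\lambda}$ and near $t$, one of the four candidates $\pm(\Psi(\cdot+i)-\xi)$, $\pm(\Psi(\cdot+is)-\xi)$ must have nonzero boundary mean. In your language: the exponent $\hat\lambda<1$ is the certificate that $M$ assigns a mean-zero bump-minus-constant a nonzero value, and producing that certificate --- rather than positing its existence --- is the work the proposal still needs to do. Your framework would accept the paper's computation as the missing middle step, but as written the argument is not complete.
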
  
 Theorem \ref{thm1.2} was  later  proved for $ 1 < p < 2, $    by the second author of this article
  in   \cite{L1}  (so  Theorem \ref{thm1.1}  is  valid   for  $  1 < p\neq 2 <  \infty). $   
  Wolff remarks above the statement of   his Lemma 1,  that ``Theorem \ref{1.1}     
  should generalize to  other domains  but  the  arguments are easiest in a 
  half space since  $ \mathcal{ L}_p $   behaves  nicely under  Euclidean operations''.

 In fact Wolff  makes extensive use of  the fact that $ \Ph ( N  z + z_0 )$, $z=x+iy\in \rn{2}_+$,  $ N $  a  
 positive integer,  $ z_0   \in \rn{2}_+,   $ is  $p$-harmonic in $ \rn{2}_+ ,     $ 
 and  $ 1/N $  periodic in $x$,  with  Lipschitz norm  $ \approx N $  on $ \re  = \ar \rn {2}_+$.    
Also he  used     functional analysis-PDE  type arguments,  involving the  
Fredholm alternative and perturbation of  certain $p$-harmonic functions (when $2<p<\infty$)
to get $\Phi $ satisfying \eqref{1.2}.     
 
In this paper we first give  in   Lemma  \ref{lem3.1},   a  hands on example of  a  $ \Ph $  for which  
 Theorem \ref{thm1.2} is  valid.    We then use this  example  and  basic properties of  
 $p$-harmonic functions to  give  a  more or less explicit construction   of   $  V  =  V ( \cdot, N, p) $  
 for $2<p<\infty$   in the following theorem.  
\begin{mytheorem} 
\label{thmA}  
Given $ p$,  $1 < p\neq 2 <  \infty$, there  exist  $N_0$  and  a constant $ c_1 \geq 1$, 
all depending only on $ p$,   such that if   $ N  \geq N_0 $ is a positive integer,  then  
there is a  $ p $-harmonic function $  V $  in  $ B ( 0, 1) $ with continuous boundary values satisfying  
\begin{align} 
\label{1.3} 
\begin{split}
&(a) \hs{.2in} - c_1  \leq   V ( t e^{i\he} ) =  V ( t e^{i ( \he + 2\pi/N) } )    \leq c_1 \quad \mbox{for} \, \,   0 \leq t  \leq  1 \, \, \mbox{and}\, \,  \he  \in \re, \\
 &     (b)  \hs{.2in}   { \ds \int_{ B(0, 1 )} }    |\nabla V|^p  dx dy \, \leq   \, c_1  \, N^{p - 1}, \\
 &    (c)  \hs{.2in}     V ( 0 ) = 0  \mbox{ and }  c_1  \,    
 { \ds   \int_{ -  \pi}^{ \pi}    V ( e^{i\he} )   d \he}    \geq  1,  \\
 & (d) \hs{.2in}   V|_{\ar B ( 0, 1 )}   \mbox{ is Lipschitz   with norm }
  \leq c_1  N. 
\end{split}
\end{align} 
  \end{mytheorem}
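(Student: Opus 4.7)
My approach is to transfer the half-plane function $\Phi$ supplied by Lemma~\ref{lem3.1} (for $2 < p < \infty$; for $1 < p < 2$ use the analogue from \cite{L1}) to the disk by solving a Dirichlet problem whose boundary data is a suitably rescaled trace of $\Phi$. After adding a constant and rescaling, assume $\Phi$ is bounded, Lipschitz on $\overline{\rn{2}_+}$, $1$-periodic in $x$, $p$-harmonic on $\rn{2}_+$, has $\int_0^1 \Phi(x)\,dx = 1$, and $\Phi(x+iy) \to 0$ uniformly in $x$ as $y \to \infty$. Define $f_N(e^{i\he}) := \Phi(N\he/(2\pi))$; this is $2\pi/N$-periodic and Lipschitz on $\ar B(0,1)$ with constant $\leq cN$. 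Let $V'$ be the unique $p$-harmonic function in $B(0,1)$ with $V'|_{\ar B(0,1)} = f_N$; uniqueness together with invariance under $\he \mapsto \he + 2\pi/N$ forces $V'$ to be $2\pi/N$-periodic. Finally set $V := V' - V'(0)$, which is $p$-harmonic, satisfies $V(0) = 0$, and inherits the periodicity.

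\textbf{Easy properties and energy bound.} Property \eqref{1.3}(a) follows from the maximum principle: $\|V'\|_\infty \leq \|\Phi\|_\infty$, so $\|V\|_\infty \leq 2\|\Phi\|_\infty$. Property (d) is the Lipschitz bound on $V|_{\ar B(0,1)} = f_N - V'(0)$, directly inherited from $\Phi$. For (b), test $V$ against the radial competitor $W(re^{i\he}) := \eta(r)(f_N(e^{i\he}) - V'(0))$, where $\eta$ is a piecewise linear cutoff supported on $[1 - 1/N, 1]$ that rises from $0$ to $1$. Since $|\eta'| \leq N$, $|\eta| \leq 1$, and $|\ar_\he f_N| \leq cN$, one checks $|\nabla W|^p \leq cN^p$ on a support of area $\leq c/N$, yielding $\int |\nabla W|^p \leq cN^{p-1}$; the $p$-harmonic minimality of $V$ among functions with the boundary trace $f_N - V'(0)$ then gives \eqref{1.3}(b).

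\textbf{Main obstacle: the mean-value bound (c).} Since $\int_{-\pi}^\pi V(e^{i\he})\,d\he = \int_{-\pi}^\pi f_N\,d\he - 2\pi V'(0) = 2\pi - 2\pi V'(0)$, condition (c) reduces to $V'(0) \leq 1 - \eta_0$ for some fixed $\eta_0 > 0$ and all $N \geq N_0$. No such bound can come from $L^p$-estimates or the maximum principle alone: at $p = 2$, Poisson's formula forces $V'(0) = 1$ exactly, killing the integral. The estimate must genuinely exploit the $p \neq 2$ defect of the mean value property quantified by Theorem~\ref{thm1.2}. The plan is to use $\Phi$ itself as a barrier on an annular shell $1 - L/N \leq r \leq 1$, where $L = L(N) \to \infty$ with $L/N \to 0$. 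In the boundary-layer coordinates $(s,t) = (\he, N(1-r)/(2\pi))$ this shell is nearly a flat Euclidean strip at the scale $1/N$, so $\Phi(Ns/(2\pi) + it)$ pulled back to the disk is approximately $p$-harmonic, matches $f_N$ on $\ar B(0,1)$, and is bounded by $\sup_{y \geq L/(2\pi)} |\Phi(x+iy)|$ on the inner circle $\{r = 1 - L/N\}$. A curvature correction, lower order in $1/N$, turns these approximations into genuine $p$-harmonic super- and sub-solutions on the shell; applying the maximum principle on $B(0, 1 - L/N)$ then sandwiches $V'(0)$ between quantities that vanish as $N \to \infty$ by the decay of $\Phi$. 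This gives $V'(0) \to 0$ and hence (c) for all $N \geq N_0$.
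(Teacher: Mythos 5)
Your decomposition — pull $\Phi$ back to $\ar B(0,1)$, solve the Dirichlet problem, subtract the value at the center — is a genuinely different route from the paper, which either patches a sector construction through Schwarz reflection ($2<p<\infty$, Section~3.2) or perturbs the homogeneous solution via the linearized $p$-Laplacian (general $p$, Section~3.3). Your handling of (a), (b), (d) is sound. The problem is (c), which carries all the weight, and whose barrier argument is circular: to invoke Lemma~\ref{lem2.0} in the shell $A=\{1-L/N\le r\le 1\}$, the supersolution $\Phi_+$ must dominate $V'$ on \emph{both} components of $\ar A$, and on the inner circle $\{r=1-L/N\}$ — where Lemma~\ref{lem2.6} shows $V'$ is within $O(e^{-cL})$ of the unknown constant $V'(0)$ — that is exactly what you are trying to establish. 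The subsequent maximum principle on $B(0,1-L/N)$ is fine, but it presupposes the bound at the inner circle that the shell comparison was supposed to produce. Extending the barrier to all of $B(0,1)$ (so only the outer boundary matters) is not automatic either: gluing a radial tail onto the shell barrier must preserve the supersolution inequality across the gluing circle, and the flux condition there faces the indefinite-sign radial derivative of the pullback of $\Phi$. Moreover, \emph{a curvature correction, lower order in} $1/N$ understates the difficulty: in the linearization, a radial $\psi(1-r)$ with $\psi''\sim -K$ yields roughly $-KN^{p-2}$ through the principal part, enough to absorb the curvature defect of size $N^{p-1}$ once $K$ is of order $N$, but the drift term $\psi'\,(\nabla\cdot A)_r$ is then of comparable or larger magnitude and of indefinite sign, so the naive radial absorption does not close.

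The underlying claim $V'(0)\to 0$ as $N\to\infty$ does appear to be correct, and would prove (c), but the mechanism needs to be a blow-up argument rather than a barrier. Set $W_N(X,Y):=V'\bigl((1-Y/N)\,e^{2\pi iX/N}\bigr)$; this is $1$-periodic in $X$, uniformly bounded, has uniformly bounded $R^{1,p}(S(1))$ energy by your part (b), and solves a PDE whose coefficients converge to those of the flat $p$-Laplacian as $N\to\infty$. Lemmas~\ref{lem2.1}--\ref{lem2.3} give compactness, the discussion following \eqref{2.6} gives uniqueness of the finite-energy periodic solution with boundary trace $\Phi|_{\re}$, and so $W_N$ converges locally uniformly to (a rescaling of) $\Phi$. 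Combining this with Lemma~\ref{lem2.6}, which gives $|W_N(X,Y)-V'(0)|\le c\,(1-Y/N)^{N/(2\pi c)}\to c\,e^{-Y/(2\pi c)}$, and with the decay $\Phi(\cdot+iY)\to 0$, yields $V'(0)\to 0$. That route would close (c) — and is in fact quantitatively stronger than Theorem~\ref{thmA}, giving $\int_{-\pi}^{\pi}V(e^{i\he})\,d\he\to 2\pi$ rather than merely a fixed lower bound — but it is not the argument you sketched.
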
 

We  were  not able  to find  a  more or less  explicit  example  for which    Theorem \ref{thmA}  
holds  when  $ 1 < p < 2. $  Instead  for  $ 1 < p\neq 2 < \infty, $  we  also  use   a  finesse type argument  
 to  eventually    obtain   Theorem \ref{thmA}   from  the perturbation method used in proving  
 Theorem \ref{thm1.2}  and a    limiting type argument.       In this proof  of  Theorem \ref{thmA}  
 we also interpret rather loosely the phrase  ``$ c_1 $ depends only on $p$''.
 However  constants   will  always be    independent of  $ N  \geq N_0 . $  
 We shall make heavy use of Wolff's  arguments in proving  Theorem \ref{thm1.2},   
 as well as  arguments of  
Varpanen  in   \cite{V},   who    adapted  Wolff's  perturbation  argument    for  constructing solutions
 to a  linearized $p$-harmonic  periodic   equation in $ \rn{2}_+$   to  certain  periodic $p$-harmonic 
 functions in the  $ \he  $  variable,   defined on $ B (0, 1). $   In  section \ref{sec4}    we   use 
 Theorem  \ref{thmA} and   modest changes  in  Wolff's  argument   to  obtain the following   
 analogue  of   Theorem  \ref{thm1.1}.         
\begin{mytheorem}  
\label{thmB}  
If  $2<p<\infty$,   then there exist bounded  weak  solutions of  $ \mathcal{L}_p \hat u  = 0 $ in 
  $  B ( 0, 1)  $  such that   $ \{ \he  \in \re  : {\ds  \lim_{ r \to 1 } \hat  u ( r e^{i\he})}$ exists\}   has  Lebesgue measure  zero.  
  Also    there exist  bounded  positive  weak solutions of $ \mathcal{L}_p \hat v = 0$  such that  $ 
\{ \he  \in \re  :  {\ds \limsup_{r \to 1 } \,   \hat v   (r e^{i\he}) > 0 \}}  $  has Lebesgue measure 0.    
\end{mytheorem}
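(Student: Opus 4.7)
The plan is to follow Wolff's original construction in $\rn{2}_+$ almost verbatim, substituting the disk-based building blocks $V_k := V(\,\cdot\,, N_k, p)$ from Theorem \ref{thmA} (suitably rotated) for the rescaled periodic functions $\Phi(N_k z + z_0)$ that played the analogous role in the half-plane. The geometric point is that $V_k$ has angular period $2\pi/N_k$ on $\ar B(0,1)$, Lipschitz norm at most $c_1 N_k$ there, and satisfies $V_k(0)=0$ together with $\int_{-\pi}^{\pi} V_k(e^{i\he})\,d\he \geq 1/c_1$. Hence its natural oscillation scale near $\ar B(0,1)$ is $1/N_k$, while it has essentially relaxed to its center value $0$ by the time one enters $\{|z|\leq 1 - C/N_k\}$ for $C$ large (by interior regularity and Harnack). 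This exactly mirrors the half-plane situation in which $\Phi(Nz)$ is $1/N$-periodic in $x$ and tends to $0$ as $y \to \infty$: rotations in the $\he$-variable replace $x$-translations, and $1-|z|$ plays the role of $y$.

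With a rapidly increasing lacunary sequence $N_1 < N_2 < \cdots$, I would construct the two examples as in \cite{W} and \cite{V}. For the positive $\hat v$, set boundary data $g(\he) := C_0 + \sum_k a_k V_k(e^{i(\he+\al_k)})$ with small positive $a_k$ chosen so that the sum converges uniformly, $C_0$ makes $g$ strictly positive and bounded, and let $\hat v$ be its $p$-harmonic extension. For $\hat u$ whose radial limits fail to exist, take $g(\he) := \sum_k \ep_k a_k V_k(e^{i(\he+\al_k)})$ with signs $\ep_k = \pm 1$ and rotations $\al_k$ chosen so that on each arc of length $2\pi/N_k$ the partial sums through level $k$ already exhibit oscillation bounded below by a fixed positive constant. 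Boundedness of $\hat u$ and $\hat v$ follows from the maximum principle, and positivity of $\hat v$ from $g>0$; the combinatorics of choosing $a_k$, $\ep_k$, $\al_k$ is essentially identical to Wolff's gap-series choice, since the $V_k$ have the same two crucial features (periodicity $2\pi/N_k$ and boundary integral bounded below) as the half-plane $\Phi(N_k z)$.

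The principal obstacle, and the only point that requires more than cosmetic changes, is the nonlinear \emph{freezing} step: one must show that the $p$-harmonic extension of the gap series is, on an arc of length $2\pi/N_k$ in the annulus $|z|\approx 1-1/N_k$, well-approximated by the partial sum $\sum_{j \leq k} \ep_j a_j V_j$, up to an error controlled by (i) the Lipschitz bound \eqref{1.3}(d), which gives the coarse $V_j$, $j<k$, oscillation at most $c\, a_j N_j/N_k$ across such an arc, and (ii) the interior decay of the fine $V_j$, $j > k$, toward their mean values. In Wolff's half-plane proof this is carried out via translation invariance, Caccioppoli/reverse-Hölder estimates, and the affineness of $\Phi(N_j z)$ on $1/N_k$-windows when $j<k$; on the disk one replaces translation by rotation and $y$ by $1-|z|$, and appeals to interior $C^{1,\al}$ regularity of $p$-harmonic functions from \cite{T} together with the boundary Harnack principle from \cite{AKSZ} to transfer Wolff's half-plane interior estimates to annular neighborhoods of $\ar B(0,1)$. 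Once the freezing step is in place, failure of Fatou follows exactly as in \cite[Theorem 1]{W}: infinitely many oscillations of amplitude bounded below on the lacunary scales $1/N_k$ prevent $\lim_{r\to 1}\hat u(re^{i\he})$ from existing for a.e.\ $\he$ in the first example, while in the second the same averaging mechanism forces $\limsup_{r\to 1}\hat v(re^{i\he}) = 0$ almost everywhere despite $\int_{-\pi}^{\pi} \hat v(e^{i\he})\,d\he$ being uniformly positive.
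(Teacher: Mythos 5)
Your geometric setup (rotation replacing translation, $1-|z|$ replacing $y$, the Lipschitz bound $c_1N_k$, decay of $V_k$ past radius $1-C/N_k$) is right, and you correctly single out the nonlinear ``freezing'' lemma --- the analogue of Wolff's Lemma 1.6 for the disk --- as the main new ingredient; the paper proves exactly such a statement (Lemma \ref{lem4.1}) and uses it as you describe. But the gap-series architecture you propose is wrong and would not prove Theorem \ref{thmB}.

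The problem is most visible in your construction of $\hat v$. You take boundary data $g(\he)=C_0+\sum_k a_k V_k(e^{i(\he+\al_k)})$ with the series converging \emph{uniformly} and $g>0$ bounded, and then take the $p$-harmonic extension. But if $g$ is continuous (which uniform convergence of a series of continuous functions gives you) and $\hat v$ is its $p$-harmonic extension, then by Lemma \ref{lem2.2} $\hat v$ extends continuously to $\bar B(0,1)$ with boundary values $g$, so $\lim_{r\to 1}\hat v(re^{i\he})=g(e^{i\he})>0$ for \emph{every} $\he$; the set where $\limsup_{r\to 1}\hat v>0$ has full measure, not measure zero. Similarly, for the divergence example, if $\sum a_k<\infty$ your $g$ is continuous and $\hat u$ has radial limits everywhere, while if $\sum a_k=\infty$ the series need not be bounded and the maximum principle does not give boundedness of $\hat u$. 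There is no choice of $a_k,\ep_k,\al_k$ alone that resolves this tension.

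What you are missing is the multiplicative cut-off weights $L_j$ and, with them, the fact that the $p$-harmonic function $\ti\si$ is constructed as the locally uniform limit of the $p$-harmonic extensions $\hat\si_j$ of the \emph{partial sums} $\si_j=R+\sum_{l\le j}a_l L_l\,\ti\Ph_l$, not as the extension of a single boundary function. The $L_j$ are built by a delicate induction (Lemmas \ref{lem4.3} and \ref{lem4.4}, following Wolff's Lemmas 2.12--2.13): whenever a partial sum threatens to exceed a threshold $2^n$ (for boundedness) or to force the gap-series partial sums below $2^{-n}$ (for positivity), the weight is halved on the offending interval. This is precisely what makes it possible to have $\sup_j\|\si_j\|_\infty<\infty$ yet $\si$ diverge a.e.\ (for $\hat u$), or $\si_j>0$ and bounded yet $\si_j\to 0$ a.e.\ (for $\hat v$). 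The freezing lemma then shows $\ti\si(re^{i\he})\approx\si_{k(r)}(e^{i\he})$ on the annulus $1-A_kN_{k+1}^{-\al}<r<1-A_{k+1}N_{k+2}^{-\al}$, so the radial behaviour of $\ti\si$ is that of the sequence $(\si_j)$ rather than of the naive boundary series --- this nonlinear mismatch is the entire mechanism behind the failure of Fatou. Without the $L_j$ machinery and the partial-sum definition of $\ti\si$, that mechanism disappears, and the theorem cannot be reached.
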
    

Next  for fixed $ p  > 1, $  and  $E$  a subset of  $ \ar B ( 0, 1 ), $   let  $ \mathcal{C} (E), $  
denote the  class of  all non-negative  $p$-superharmonic functions  $\ze$  on $ B ( 0, 1) $ 
 (i.e.,  $\mathcal{L}_p \ze  \leq 0$ weakly  in  $  B ( 0, 1 )$)    with   
\begin{align}
\label{1.4} 
{\ds  \liminf_{\stackrel{z \in B(0,1)}{z\to e^{i\he}}}}\, \ze ( z )   \geq 1 \quad  \mbox{for all}\, \,   e^{i\he} \in E.  
\end{align}
Put   $ \om_p ( z_0, E )  =  \inf  \{ \ze (z_0) : \ze \in  \mathcal{C} (E)\} $  when  $ z_0 \in B ( 0, 1). $   Then 
$ \om_p (z_0,  E ) $  is usually  referred to as the  $p$-harmonic measure of  $ E $  relative to $ z_0 $
 and $ B (0, 1 ). $      In   section  \ref{sec5}  we  use  Theorem   \ref{thmA}  and  
 follow  closely   Llorente,  Manfredi, and Wu  in generalizing their work, \cite{LMW},   
 on  $ p $-harmonic measure in  $  \rn{2}_+ $   to  $ B (0, 1 ) . $  We prove  
\begin{mytheorem}  
\label{thmC}  
If  $ 1 < p\neq 2 < \infty $  there exist finitely many sets 
$ E_1, E_2, \dots, E_{\kappa}    \subset  \ar B (0, 1), $  such that  
\begin{align}
 \label{1.5}   
 \om_p  (0, E_k ) = 0, \quad  \om_p ( 0,  \ar B (0, 1 ) \sem E_k ) = 1 \, \, \mbox{for}\, \, 1 \leq k \leq \kappa,  \quad \mbox{and} \quad  \bigcup_{k=1}^{\kappa}    E_k   =   \ar B (0, 1 ).  
 \end{align}
 Furthermore,  $ \ar B ( 0, 1) \sem E_k  $  has  one Lebesgue  measure 0  for  $ 1 \leq k \leq \kappa. $   
\end{mytheorem}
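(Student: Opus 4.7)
The plan is to follow closely the Llorente--Manfredi--Wu scheme from \cite{LMW}, using Theorem~\ref{thmA} as the disk analogue of Wolff's Lemma~1. In the half-space \cite{LMW} extracts the sets $E_k$ from the boundary behavior of gap-series constructions built out of the periodic $p$-harmonic function of \cite[Lemma 1]{W}; since Theorem~\ref{thmA} delivers precisely the corresponding object on $B(0,1)$, the bulk of \cite{LMW} should transfer after geometric adjustments reflecting the curvature of $\partial B(0,1)$.

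Concretely, I would first choose a very rapidly increasing sequence $\{N_j\}_{j \geq 1}$ with $N_j \geq N_0$ and $N_{j+1} \geq C N_j$ for a suitable large $C = C(p)$, and set $V_j := V(\cdot, N_j, p)$ as in Theorem~\ref{thmA}, so that $V_j(0) = 0$, $V_j$ is $2\pi/N_j$-periodic in $\theta$, $V_j|_{\partial B(0,1)}$ is Lipschitz of norm $\lesssim N_j$, and the boundary mean of $V_j$ is bounded below. Next I would use a Wolff-style gap-series construction, analogous to the one driving Theorem~\ref{thmB}: combine the $V_j$'s at nested annular scales $1 - \rho_j \leq |z| < 1 - \rho_{j+1}$ (with $\rho_j \to 0$ chosen to match the frequency jumps), and control the combination through the boundary Harnack principle \cite{AKSZ} and standard Carleson-type estimates for $p$-harmonic functions. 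From the resulting function $\Psi$ one reads off a partition of $\partial B(0,1)$ into finitely many sets $E_1,\dots,E_\kappa$, defined by the finitely many admissible ``types'' of radial lim sup / lim inf behavior. Because Fatou fails Lebesgue-almost everywhere along the lines of Theorem~\ref{thmB}, the complements $\partial B(0,1)\setminus E_k$ will automatically be Lebesgue null, and by construction $\bigcup_k E_k = \partial B(0,1)$.

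The main obstacle, and the place where I expect most of the effort to go, is verifying $\omega_p(0,E_k) = 0$. For this I would, for each $\varepsilon > 0$, build a nonnegative $p$-superharmonic barrier $\eta_{k,\varepsilon}$ on $B(0,1)$ with $\eta_{k,\varepsilon}(0) \leq \varepsilon$ and $\liminf_{z \to \xi}\eta_{k,\varepsilon}(z) \geq 1$ for all $\xi \in E_k$, forcing $\omega_p(0,E_k)\leq \varepsilon$. The construction uses an explicit superposition of rotated and suitably rescaled copies of the $V_j$'s, aggregated by pointwise minima and by $p$-superharmonic envelopes rather than by literal addition; here the quantitative inputs $V_j(0) = 0$, the Lipschitz bound $\leq c_1 N_j$ on $\partial B(0,1)$, the energy bound $\int |\nabla V_j|^p \leq c_1 N_j^{p-1}$, and the mean lower bound in~(\ref{1.3})(c) are all needed simultaneously, and they are tuned exactly as in \cite{LMW} precisely so that the barrier is large on $E_k$ yet small at the origin. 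The delicate point, inherited from \cite{LMW}, is that the nonlinearity of $\mathcal{L}_p$ forbids naive summation, so each aggregation step has to be justified via comparison principles and the boundary Harnack estimates cited above; since Theorem~\ref{thmA} was engineered to mirror the half-space input of \cite{LMW} quantitatively, these justifications should go through with only the technical modifications alluded to in the paper's statement.
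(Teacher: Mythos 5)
Your outline correctly identifies the high-level plan (run the Llorente--Manfredi--Wu scheme with Theorem~\ref{thmA} replacing Wolff's half-space Lemma~1, using Lemma~\ref{lem4.1}-type estimates and an inductive choice of $(N_j)$), but it misses the specific mechanism that makes the LMW argument work, and in one place it invokes a step that does not actually follow.

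The heart of the LMW construction, reproduced in Section~\ref{sec5}, is a \emph{multiplicative} gap series, not a sum or a superharmonic-envelope aggregation of the $V_j$. One sets $q_j^k(e^{i\theta}) = \Phi_j(-e^{i(\theta+2k\pi/\kappa)})$ for $k = 1,\dots,\kappa$ (a fixed finite number of rotations) and then forms $f_j^k := \prod_{l=1}^j (1+q_l^k)$. Normalizing so that $\int_{-\pi}^{\pi}\log(1+\Phi_j)\,d\theta \geq c_2^{-1}$ as in~\eqref{5.1}, one gets $\prod_{k=1}^{\kappa} f_j^k(e^{i\theta}) > \Lambda^{\kappa j}$ for all $\theta$ and some fixed $\Lambda > 1$, and \emph{this} product identity plus pigeonholing is what forces the sets $E_k := \{e^{i\theta} : f_j^k(e^{i\theta}) > \Lambda^j \text{ infinitely often}\}$ to cover $\partial B(0,1)$. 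Your description of $E_k$ as ``types of radial limsup/liminf behavior'' does not give a finite cover or a mechanism to verify $\bigcup_k E_k = \partial B(0,1)$; the exponential growth via products is essential and absent from your sketch.

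Similarly, the proof that $\omega_p(0,E_k) = 0$ does not proceed through minima of $p$-superharmonic barriers. One instead works with the $p$-harmonic extensions $\hat f_j^k$ of the products, establishes via Lemma~\ref{lem4.1} and the tent estimates~\eqref{5.13}--\eqref{5.31} that $\hat f_N^k \gtrsim \Lambda^j$ on the boundary of suitably modified tent regions $T_j^*$ sitting over $G_j^k$, and then applies the boundary maximum principle in $B(0,1) \setminus \bigcup_{j=n}^N \bar T_j^*$ to the single $p$-harmonic function $\bar C\,\Lambda^{-n}\hat f_N^k$. This gives $\omega_p(0,\bigcup_{j \geq n}G_j^k) \lesssim \Lambda^{-n}$, which is the decay that lets one pass to $E_k$. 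Taking pointwise minima of rotated copies of $V_j$ would not obviously produce such exponential decay, and the quantitative inputs you list from Theorem~\ref{thmA} are used through the product structure, not directly as barrier envelopes.

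Finally, the claim that $|\partial B(0,1)\setminus E_k| = 0$ is ``automatic'' from the failure of Fatou in Theorem~\ref{thmB} is a genuine gap. In the paper this is a separate probabilistic argument: setting $h_j^k = \tau_j(-e^{i(N_j\theta + 2k\pi/\kappa)}) - m_j$ with $m_j$ the average of $\log(1+a_{lj})$, one uses that $(h_j^k)$ are orthogonal in $L^2(\partial B(0,1))$ (because $N_{j+1}/N_j$ is divisible by $\kappa$) and a Kaufman--Wu type strong law of large numbers to get $\sum_{l\leq j} h_l^k = O(j^{3/4})$ a.e., whence $\log f_j^k \geq j\log\Lambda + O(j^{3/4}) \to \infty$ a.e., so a.e.\ $\theta$ lies in $E_k$. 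Theorem~\ref{thmB} concerns different boundary data and does not imply this; without the orthogonality/LLN step the full-measure statement is unproved.
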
 
   
As for the plan of this paper,  in  section \ref{sec2}  we give some  definitions and state some basic properties 
 of  $p$-harmonic functions.  As outlined above,  in  sections  \ref{sec3}, \ref{sec4}, \ref{sec5},     we prove  
 Theorems  \ref{thmA},    \ref{thmB},   \ref{thmC}, respectively.   Finally, in  section \ref{sec6}, we  make  closing remarks.

\section{Basic estimates and definitions for $p$-harmonic functions}
\label{sec2}  
\setcounter{equation}{0} 
 \setcounter{theorem}{0}
 
 In this section we  first introduce some notation, then give some definitions,  and  finally state  some fundamental estimates for 
   $p $-harmonic functions when $ p $ is fixed, $ 1 < p < \infty. $ As in the introduction we  set 
$ B ( z_0, \rho )  = \{ z:\,  | z - z_0 |  < \rho \}$ and $\rn{2}_+   = \{ z = x + i y :\,  y > 0 \}. $     
Concerning constants, unless otherwise stated, in this section, and throughout the paper,
$ c $ will denote a  positive constant  $ \geq 1$, not
necessarily the same at each occurrence, depending only on $ p. $ Sometimes we write $ c = c (p) $
 to indicate this dependence.  Also   $ A  \approx  B $  means  $  A/B $ is bounded above and below 
 by  constants depending only on  $p.$  Let  $ d ( E_1, E_2 )$  denote the distance between the sets 
 $ E_1$ and $E_2. $ For short we write  $ d ( z,  E_2  ) $ for  $ d  ( \{z\}, E_2). $  Let $\mbox{diam}(E)$, 
 $\bar E$, and $\ar E $  denote  the diameter, closure, and boundary  of  $ E$ respectively. 
We also write
$ { \ds \max_{E}   \hat v, \,  \min_{E} \hat v } $ to denote  the
  essential supremum and infimum  of $  \hat v  $ on $ E$
whenever $ E \subset  \rn{2} $ and  $ \hat v $ is defined on $ E$.
  
If $ O  \subset \mathbb R^{2} $ is open and $ 1  \leq  q  \leq  \infty, $ then by   $
W^{1 ,q} ( O ) $ we denote the space of equivalence classes of functions
$ h $ with distributional gradient $ \nabla h, $ both of which are $q$-th power integrable on $ O. $  Let  
 \[
 \| h \|_{1,q} = \| h \|_q +  \| \, | \nabla h | \, \|_{q}
 \]
be the  norm in $ W^{1,q} ( O ) $ where $ \| \cdot \|_q $ is
the usual  Lebesgue $ q $ norm  of functions in the Lebesgue space $ L^q(O).$  Next let $ C^\infty_0 (O )$ be
 the set of infinitely differentiable functions with compact support in $
O $ and let  $ W^{1,q}_0 ( O ) $ be the closure of $ C^\infty_0 ( O ) $
in the norm of $ W^{1,q} ( O  ). $   Let  $ \lan \cdot, \cdot \ran $  denote the standard inner product on $ \rn{2}. $  
Given an  open set 
 $ O  $  and $ 1 < p < \infty, $  we say that $ \hat v  $ is $p$-harmonic in $ O $ provided $ \hat v \in W^ {1,p} (G) $ for each open $ G $ with  $ \bar G \subset O $ and   
	\begin{align}
	\label{2.1}
		\int    | \nabla \hat  v |^{p-2}   \lan    \nabla \hat v,  \nabla \he 
 \ran \, dx dy  = 0 \quad \mbox{whenever} \, \,\he \in W^{1, p}_0 ( G ).
			\end{align} 

	We  say that $  \hat v  $ is a $p$-subsolution  ($p$-supersolution) in $O$ 
	provided    $  \hat v  \in W^{1,p} (G) $ whenever $ G $ is as above and  \eqref{2.1} holds with
$=$ replaced by $\leq$ ($\geq$) whenever $  \theta  \in W^{1,p}_{0} (G )$ with $\theta \geq 0$.  
We begin our statement of lemmas with the following maximum principle.  
\begin{lemma}  
\label{lem2.0}  
Given $1<p<\infty$, if  $  \hat v $ is  a $p$-subsolution and  $ \hat h $ is  a  $p$-supersolution in $ O $ with 
 $ \max ( \hat v -  \hat h, 0 )  \in  W_0^{1, p} (G) , $ 
 whenever $ G $ is an open set with  $  \bar G  \subset O, $   then   $  { \ds \max_O ( \hat v -  \hat h)}  \leq 0 . $ 
\end{lemma}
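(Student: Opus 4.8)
The plan is to run the classical comparison argument for the $p$-Laplacian. Put $w = \max(\hat v - \hat h, 0)$ and fix an arbitrary ball $G = B(z_0, r)$ with $\bar G \subset O$. Since such balls cover $O$, it suffices to show $w \equiv 0$ on each of them, for then $\hat v \le \hat h$ a.e.\ in $O$, i.e.\ $\max_O(\hat v - \hat h) \le 0$. On $G$ one has $\hat v, \hat h \in W^{1,p}(G)$, hence $w \in W^{1,p}(G)$, and by hypothesis $w \in W^{1,p}_0(G)$; also $w \ge 0$. Thus $w$ may be used as the test function $\theta$ in \eqref{2.1} both for the $p$-subsolution $\hat v$ (with $=$ weakened to $\le$) and for the $p$-supersolution $\hat h$ (with $=$ weakened to $\ge$).

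Next I would subtract these two inequalities. Writing $\mathcal{A}(\xi) = |\xi|^{p-2}\xi$ for $\xi \in \rn{2}$, this gives
\[
\int_G \lan \mathcal{A}(\nabla \hat v) - \mathcal{A}(\nabla \hat h),\, \nabla w \ran\, dx\, dy \le 0,
\]
where the integrand is summable by H\"older's inequality, since $|\mathcal{A}(\nabla\hat v)| = |\nabla\hat v|^{p-1}$ and $|\mathcal{A}(\nabla\hat h)| = |\nabla\hat h|^{p-1}$ lie in $L^{p/(p-1)}(G)$ while $\nabla w \in L^p(G)$. Using the Sobolev chain rule $\nabla w = (\nabla\hat v - \nabla\hat h)\,\chi_{\{\hat v > \hat h\}}$ a.e.\ in $G$ (see \cite{GT}), the last display reduces to
\[
\int_{G \cap \{\hat v > \hat h\}} \lan \mathcal{A}(\nabla\hat v) - \mathcal{A}(\nabla\hat h),\, \nabla\hat v - \nabla\hat h \ran\, dx\, dy \le 0.
\]
The vector field $\mathcal{A}$ is strictly monotone: $\lan \mathcal{A}(\xi) - \mathcal{A}(\eta),\, \xi - \eta \ran \ge 0$ for all $\xi, \eta \in \rn{2}$, with equality only if $\xi = \eta$; this is elementary and holds for every $1 < p < \infty$ (through one inequality when $p \ge 2$ and a different one when $1 < p < 2$; see \cite{HKM}). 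Hence the integrand above is nonnegative, so it vanishes a.e.\ on $G \cap \{\hat v > \hat h\}$, which forces $\nabla\hat v = \nabla\hat h$ a.e.\ there; together with $\nabla w = 0$ a.e.\ on $\{\hat v \le \hat h\}$ this gives $\nabla w = 0$ a.e.\ in $G$.

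Finally, $w \in W^{1,p}_0(G)$ has vanishing gradient and $G$ is a bounded ball, so the Poincar\'e inequality yields $\|w\|_{L^p(G)} \le C(G)\,\|\nabla w\|_{L^p(G)} = 0$, whence $w \equiv 0$ on $G$; letting $G$ range over all such balls gives $w = 0$ a.e.\ in $O$, which is the assertion. The only points needing care should be the verification that $w$ is a legitimate test function --- the chain rule for $\nabla \max(\cdot, 0)$ of Sobolev functions, and the $L^{p/(p-1)}$--$L^p$ duality that makes the integrals finite --- and the strict monotonicity of $\mathcal{A}$. For the latter, only the qualitative implication ``equality $\Rightarrow \xi = \eta$'' is used, which is uniform in $p$, so I do not anticipate a genuine obstacle, and the write-up should be short.
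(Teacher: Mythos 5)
Your proof is correct: it is the classical Minty--Browder monotonicity argument for the $p$-Laplacian comparison principle, using $w=(\hat v-\hat h)^+$ as a nonnegative test function in both weak inequalities, the Sobolev chain rule for the positive part, strict monotonicity of $\xi\mapsto|\xi|^{p-2}\xi$, and Poincar\'e to conclude $w\equiv 0$. The paper gives no argument of its own but simply cites \cite[Lemma 3.18]{HKM}, whose proof is precisely this one, so you have in effect reproduced the reference's proof.
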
 

\begin{proof}
A proof of this lemma can be found in \cite[Lemma 3.18]{HKM}.
\end{proof}

%
\begin{lemma}
\label{lem2.1}
Given $ p, 1 < p < \infty,   $   let
 $ \hat v $  be  $p$-harmonic in $B (z_0 ,4\rho)$ for some $\rho>0$ and $z_0  \in \rn{2}$. Then  
\begin{align}
 	\label{2.2} 
\begin{split}
&(a)  \hs{.2in}  {\ds  \max_{B (z_0,  \rho/2)}  \, \hat v  -     \min_{B ( z_0, \rho/2 ) }  \hat v     \leq  c \left( \rho^{ p - 2} \,\int_{B ( z_0, \rho)} \, | \nabla \hat v |^{ p } \, dx dy \right)^{1/p}   \, \leq \, c^2  \, (\max_{B ( z_0,
 2 \rho )} \hat  v   -   \min_{B ( z_0, 2 \rho ) }  \hat v  ).} \\
&	\mbox{ Furthermore, there exists $\ti \al  = \ti \al (p)  \in (0,1)$ such that if $ s \leq  \rho$  then} \\ 			
& (b) \hs{.2in}   {\ds  \max_{B(z_0,s) }  \hat v  - \min_{B ( z_0, s) } \hat v  \leq  c \left( \frac{s}{\rho} \right)^{\ti \al} \,  \left(\max_{B (z_0, 2 \rho )}  \, \hat v  -     \min_{B ( z_0, 2\rho ) }  \hat v \right) .} \\  
&(c) \hs{.2in}  \mbox{ If  $ \hat v \geq 0 $ in $ B ( z_0, 4 \rho), $  then }      {\ds  \max_{B(z_0, 2\rho ) }  \hat v  \leq   c  \min_{B ( z_0, 2 \rho ) } \hat v }. 
\end{split}
\end{align}
   \end{lemma}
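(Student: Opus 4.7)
The plan is to follow the standard De Giorgi--Moser machinery for quasilinear elliptic equations, as collected in \cite[Chapters 3, 6--7]{HKM} and \cite[Chapter 8]{GT}, so all three parts should come from well-established energy and iteration arguments. Since the lemma is scale invariant, by translation and the dilation $z \mapsto (z-z_0)/\rho$ I may assume throughout that $z_0 = 0$ and $\rho = 1$, and transfer back at the end by homogeneity.

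For part (a), the right-hand inequality is a Caccioppoli-type estimate. I would fix a cutoff $\phi \in C_0^\infty(B(0, 2))$ with $\phi \equiv 1$ on $B(0, 1)$ and $|\nabla\phi| \leq c$, set $k = \tfrac{1}{2}(\max_{B(0,2)}\hat v + \min_{B(0,2)}\hat v)$, and test \eqref{2.1} with $\theta = (\hat v - k)\phi^p \in W^{1,p}_0(B(0,2))$. Expanding $\nabla\theta$ and applying Young's inequality to absorb the $|\nabla\hat v|^{p-1}|\nabla\phi|$ term yields
\[
\int_{B(0,1)} |\nabla \hat v|^p\, dxdy \leq c\int_{B(0, 2)} |\hat v - k|^p\, dxdy \leq c\,\Bigl(\max_{B(0,2)}\hat v - \min_{B(0,2)}\hat v\Bigr)^p,
\]
which rescales to the claimed right-hand inequality. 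For the left-hand inequality, both $\hat v - \min_{B(0,1)}\hat v$ and $\max_{B(0,1)}\hat v - \hat v$ are nonnegative $p$-subsolutions, so the classical $L^\infty$--$L^p$ estimate obtained by Moser iteration controls their supremum on $B(0,1/2)$ by their $L^p$ average on $B(0,1)$; subtracting the averages and applying the Poincar\'e--Sobolev inequality replaces that $L^p$ norm by the energy integral of $\hat v$.

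For (b) I would iterate the oscillation decay coming from combining the sup-estimate for $p$-subsolutions with the weak infimum estimate for $p$-supersolutions, which produces a constant $\lambda = \lambda(p) \in (0,1)$ with
\[
\max_{B(0, 1/2)}\hat v - \min_{B(0, 1/2)}\hat v \leq \lambda \Bigl(\max_{B(0,1)}\hat v - \min_{B(0, 1)}\hat v\Bigr).
\]
A dyadic iteration on the radii $s = 2^{-j}\rho$ then gives $\tilde\alpha = \log(1/\lambda)/\log 2 \in (0,1)$ together with the claimed H\"older decay. For (c), I would test \eqref{2.1} with $\theta = \phi^p (\hat v + \e)^{1-p}$ for small $\e > 0$ (letting $\e \downarrow 0$ at the end using $\hat v \geq 0$), obtaining a logarithmic Caccioppoli inequality; combining the resulting weak Harnack estimate for the infimum with the sup-estimate for subsolutions applied to $\hat v$ yields the Harnack inequality on $B(0, 2)$.

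The only real technical point --- and the main obstacle throughout --- is that when $p \neq 2$ the $p$-Laplace operator is degenerate (for $p>2$) or singular (for $1<p<2$), so the Moser iteration must be carried out with care near $\{\nabla \hat v = 0\}$, typically via regularization and passage to the limit. However this is done in full detail in \cite{HKM}, \cite{GT}, and \cite{T}, and since the present setting is planar and the estimates are purely local, those arguments specialize directly and there is nothing further to verify.
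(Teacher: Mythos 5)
Your outline is correct and is essentially the same route the paper takes: the paper's proof of this lemma is simply a citation to Chapter~6 of \cite{HKM}, which contains exactly the Caccioppoli estimate, De Giorgi--Moser sup-bound, oscillation decay and Moser--Harnack argument you sketch. One small caveat: for these particular estimates the degeneracy of $\mathcal{L}_p$ at $\{\nabla \hat v = 0\}$ is not actually an obstacle (the energy and iteration arguments work directly with $W^{1,p}$ test functions and never divide by $|\nabla \hat v|$; that degeneracy only becomes delicate for the $C^{1,\hat\gamma}$ regularity of Lemma~\ref{lem2.3}), and the $\epsilon$-regularization in part (c) is there to avoid dividing by $\hat v$ itself, not by $|\nabla \hat v|$.
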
  
\begin{proof}
 For  a proof of  Lemma \ref{lem2.1},   see  chapter 6 in   \cite{HKM}.   Here \eqref{2.2}  $(c)$ is called  Harnack's inequality.  
\end{proof}
\begin{lemma} 
\label{lem2.2} 
Let  $  \Om  =  B ( 0, 1)  $  or   $  \rn{2}_+ $  and $ 1 < p < \infty. $   Let $ z_0   \in  \ar \Om $   and suppose  
 $  \hat v $ is  $p$-harmonic  in   $  \Om \cap  B  ( z_0, 4 \rho  )$ for $0 <  \rho  <\mbox{diam}(\Om)$   
  with $ \hat h  \in  W^{1,p}  ( \Om \cap   B  ( z_0, 4 \rho  ) ) $   and  
  $ \hat v  - \hat h  \in  W^{1,p}_0  ( \Om \cap   B  ( z_0, 4 \rho  ) ). $   
  If    $  \hat h $ is continuous on $  \ar  \Om \cap  B  ( z_0, 4 \rho  )$  
  then $ \hat v $ has a continuous extension to  $  \bar \Om  \cap  B ( z_0, 4\rho ), $ 
   also denoted  $ \hat v, $  with  $ \hat v \equiv \hat  h $ on  $  \ar \Om 
\cap  B ( z_0, 4\rho ). $    If  
\[  
|\hat h ( z )  -  \hat h (w) | \leq    M'   | z - w |^{\hat \si} \quad \mbox{whenever}\quad    z, w \in \ar  \Om \cap B ( z_0,  4 \rho),  
\]  
for some $  \hat \si \in (0, 1], $  and $  1  \leq  M'  <  \infty, $ then there exists $  \hat \si_1 \in  (0, 1], $  depending only on $ \hat \si$ and $p $,  such that   
\begin{align}
\label{2.3a}   
 | \hat v ( z ) -  \hat v ( w) |  \leq  2 M' \rho^{ \hat \si}  +   \,    \,  (| z - w |/ \rho)^{ \hat \si_1}     \, \,    \max_{  \Om \cap  \ar B ( z_0, 2 \rho)} \, | \hat v|
\end{align}
whenever $z, w   \in \Om \cap B ( z_0,  \rho   )$. 
 
If  $   \hat h \equiv 0  \mbox{ on } \ar  \Om \cap  B ( z_0, 4 \rho)$,   $\hat v   \geq 0 $ in  $  B ( z_0, 4 \rho )$,    $\hat c  \geq  1,  $  and  $ z_1  \in  \Om  \cap B ( z_0, 4 \rho  ) $ with $ \hat c \,  d ( z_1, \ar \Om ) \geq    \rho, $  then there exists  $  \ti c,   $  depending only on $ \hat c $ and  $p$,    such that  
\begin{align}
 \label{2.3} 
\hs{.85in} (+) \hs{.2in}   \max_{B (z_0, 2 \rho )} \hat v  \leq  \ti c \left( \rho^{ p - 2} \,\int_{B ( z_0, 3 \rho)} \, | \nabla \hat v |^{ p } \, dx dy \right)^{1/p}   \leq   (\ti c)^2  \, \hat  v ( z_1 ).
\end{align} 
Furthermore, using \eqref{2.3a}, it follows for $ z, w  \in  \bar \Om  \cap B ( z_0, 2 \rho )$ that
\[
(++)   \hs{.2in}       | \hat v ( z  ) - \hat  v ( w ) |   \leq  c\,  \hat v (z_1)   \left( \frac{ | z  -  w |}{\rho } \right )^{ \hat \si_1}  . 
\]
\end{lemma}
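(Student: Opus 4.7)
The plan is to establish the three parts of Lemma \ref{lem2.2} in order: continuous extension up to the boundary, the Hölder estimate \eqref{2.3a}, and the Carleson-type bounds $(+)$ and $(++)$. Both $B(0,1)$ and $\rn{2}_+$ satisfy a uniform exterior ball condition, so every boundary point is regular for the $p$-Laplacian by the Wiener criterion (see \cite[Chapter 6]{HKM}). Thus the Dirichlet problem on $\Om \cap B(z_0, 4\rho)$ with continuous boundary data has a weak solution extending continuously to $\bar \Om \cap B(z_0, 4\rho)$; since $\hat v - \hat h \in W^{1,p}_0(\Om \cap B(z_0, 4\rho))$, Lemma \ref{lem2.0} applied against smooth approximations to $\hat h$ identifies $\hat v$ with this Dirichlet solution, so $\hat v$ extends continuously with $\hat v \equiv \hat h$ on $\ar \Om \cap B(z_0, 4\rho)$.

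For \eqref{2.3a}, fix $z, w \in \Om \cap B(z_0, \rho)$ and set $r = |z-w|$. If $r \leq \tfrac{1}{4} d(z, \ar\Om)$ then $B(z, 2r) \subset \Om \cap B(z_0, 4\rho)$ and Lemma \ref{lem2.1}(b) gives the bound directly. Otherwise pick $q \in \ar \Om$ with $|z - q| \leq 4r$ and reduce to controlling the oscillation of $\hat v$ on $\Om \cap B(q, Cr)$. The strategy is to iterate on a dyadic sequence of balls $B(q, 2^{-j}\rho)$, showing
\[
\underset{\Om \cap B(q, 2^{-j}\rho)}{\mathrm{osc}}\; \hat v \;\leq\; C\bigl(M'(2^{-j}\rho)^{\hat\sigma} + 2^{-j\hat\sigma_1}\, M_0 \bigr), \quad M_0 := \max_{\Om \cap \ar B(z_0, 2\rho)}|\hat v|,
\]
each step invoking Lemma \ref{lem2.0} to compare $\hat v$ with $\hat h(q) \pm[M'(2^{-j}\rho)^{\hat\sigma} + \epsilon_j M_0]$. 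The barriers are built from a positive $p$-harmonic (or $p$-superharmonic) function that vanishes on the relevant portion of $\ar\Om$: on $\rn{2}_+$ the linear function $y$ itself suffices, and on $B(0,1)$ a smooth adapted barrier is obtained via Lemma \ref{lem2.1} on small boundary balls where $\ar B(0,1)$ is close to flat. The exponent $\hat\sigma_1 \in (0,1]$ is then fixed by the geometric decay rate and $\hat\sigma$.

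For $(+)$, extend $\hat v$ by zero outside $\Om$; since $\hat v \geq 0$ and $\hat v \equiv 0$ on $\ar \Om \cap B(z_0, 4\rho)$, the extension is a non-negative $p$-subsolution on all of $B(z_0, 4\rho)$. Moser iteration applied to this subsolution (equivalently, the first inequality of Lemma \ref{lem2.1}(a) adapted across the boundary) gives $\max_{B(z_0, 2\rho)}\hat v \leq c(\rho^{p-2}\int_{B(z_0, 3\rho)}|\nabla \hat v|^p\, dx\, dy)^{1/p}$. For the second inequality, a Caccioppoli estimate with cutoff between $B(z_0, 3\rho)$ and $B(z_0, 4\rho)$ yields
\[
\rho^{p-2}\int_{B(z_0, 3\rho)}|\nabla \hat v|^p\, dx\, dy \leq c\, \bigl(\max_{B(z_0, 4\rho)}\hat v\bigr)^p,
\]
and then the boundary Carleson estimate furnishes $\max_{B(z_0, 4\rho)}\hat v \leq c\,\hat v(z_1)$. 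The Carleson estimate itself follows from \eqref{2.3a} with $\hat h \equiv 0$ and a finite Harnack chain via Lemma \ref{lem2.1}(c): if the supremum is attained at a point $z^*$ with $d(z^*, \ar\Om) \geq c_0 \rho$, the chain connects $z^*$ to $z_1$; otherwise \eqref{2.3a} forces $\hat v(z^*)$ to be small compared with $\max_{\Om \cap \ar B(z_0, 4\rho)}\hat v$, and after finitely many iterations one reduces to the interior case.

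Finally $(++)$ is immediate from $(+)$ combined with \eqref{2.3a} applied with $M'=0$ and the bound $M_0 \leq c\,\hat v(z_1)$ furnished by $(+)$. The main obstacle is the dyadic iteration behind \eqref{2.3a}: one has to mix the Hölder contribution from $\hat h$ with the interior decay from Lemma \ref{lem2.1}(b) while producing barriers that work uniformly in both $B(0,1)$ and $\rn{2}_+$, and one must verify that the geometric ratio in the iteration can be chosen strictly less than $1$ with constants depending only on $p$ and $\hat\sigma$, thereby yielding $\hat\sigma_1$ as claimed.
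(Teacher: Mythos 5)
The paper does not actually prove Lemma \ref{lem2.2}: it cites Theorem 6.44 of \cite{HKM} for the H\"older estimate \eqref{2.3a} and the Carleson-type estimates of \cite{AS} for \eqref{2.3}. Your proposal reconstructs the standard arguments behind those citations, and the framework you lay out (exterior ball boundary regularity, a barrier/dyadic iteration for \eqref{2.3a}, zero extension to a subsolution together with Harnack chaining for $(+)$) is essentially the content of those references, so in spirit you and the paper agree.

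One step fails as written, though. To close the second inequality in $(+)$ you apply Caccioppoli between $B(z_0,3\rho)$ and $B(z_0,4\rho)$, arriving at the quantity $\max_{B(z_0,4\rho)}\hat v$, and then claim that ``the boundary Carleson estimate furnishes $\max_{B(z_0,4\rho)}\hat v\leq c\,\hat v(z_1)$.'' But the hypothesis makes $\hat v$ $p$-harmonic only in the open set $\Om\cap B(z_0,4\rho)$, with boundary trace $\hat h$ on $\Om\cap\ar B(z_0,4\rho)$ that need not vanish there and is entirely uncontrolled by $\hat v(z_1)$; by the maximum principle $\max_{B(z_0,4\rho)}\hat v$ is governed by that outer trace, so it can be arbitrarily large compared with $\hat v(z_1)$. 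A Carleson bound in terms of $\hat v(z_1)$ is only available on a ball of radius strictly less than $4\rho$. You must take the Caccioppoli cutoff supported inside, say, $B(z_0,7\rho/2)$, and first establish $\max_{\Om\cap B(z_0,7\rho/2)}\hat v\leq c\,\hat v(z_1)$ directly by the boundary-decay/Harnack-chain iteration (which uses only the data in $\Om\cap B(z_0,4\rho)$); only then does the gradient bound close. Two smaller points worth stating: passing from a Moser $\sup$-bound on the zero extension to the middle gradient quantity in $(+)$ uses a Poincar\'e inequality, valid because the extension vanishes on a set of measure $\approx\rho^2$ inside $B(z_0,3\rho)$; and in the interior case of \eqref{2.3a}, Lemma \ref{lem2.1}(b) produces the ratio $|z-w|/d(z,\ar\Om)$ rather than $|z-w|/\rho$, so it must be combined with the boundary decay at scale $d(z,\ar\Om)$ before the stated power of $|z-w|/\rho$ emerges.
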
 
 \begin{proof}  
 For  the   proof of  \eqref{2.3a}   see  Theorem 6.44 in  \cite{HKM}.        Here
\eqref{2.3} $(+)$ is sometimes referred to as Carleson's  inequality,     see   \cite{AS}.
\end{proof} 
\begin{lemma}   
\label{lem2.3}	
Let $p,  \hat v, z_0,\rho,$ be as in Lemma \ref{lem2.1}.
Then	 $ \hat  v $ has a  representative locally in  $ W^{1, p} (B(z_0, 4\rho))$,    with H\"{o}lder
continuous partial derivatives in $B(z_0,4\rho) $  (also denoted $\hat v $), and there exist $ \hat  \ga \in (0,1]$ and $c \geq 1 $,
depending only on $ p, $ such that if $ z,  w  \in B (  z_0,  \rho/2 ), $     then  
\begin{align}
	\label{2.4} 	
\begin{split}	
&	(\hat a) \hs{.1in} \, \, 
 c^{ - 1} \, | \nabla \hat v  ( z ) - \nabla \hat v ( w ) | \, \leq \,
 ( | z -  w  |/ \rho)^{\hat  \ga }\, {\ds \max_{B (  z_0 ,\rho )}} \, | \nabla \hat  v | \leq \, c \,  \rho^{ - 1} \, ( | z -  w |/ \rho )^{\hat  \ga  }\, {\ds \max_{B (z_0, 2 \rho )}}  | \hat v |.  \\
&   \mbox{Also $ \hat v $  has distributional  second partials with } \\
&   (\hat b)  \, \,  \, {\ds \int_{B(z_0, \rho) \cap  \{ \nabla \hat v \not = 0 \} }}  \,   |\nabla \hat v  |^{p-2} \, \left(  |\hat v_{x x}|^2 (z)  + 
  |\hat v_{yy}|^2 (z)     +  |\hat v_{x y}|^2 (z)  \right) dx dy 
\leq   c \, \rho^{-p} \, {\ds  \max_{B (z_0, 2 \rho )}  | \hat v |} .\\
&(\hat c) \hs{.1in} \mbox{ If  $  \nabla \hat v ( z_0 )  \not = 0, $     then   $ \hat v $  is infinitely differentiable  in  $ B (  z_0, s ) $  for some  $ s > 0. $}  
\end{split}
\end{align}
 \end{lemma}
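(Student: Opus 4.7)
The plan is to derive all three parts of Lemma \ref{lem2.3} by invoking the classical regularity theory for $p$-harmonic functions, as developed in \cite{T} and summarized in \cite{HKM}.

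First, for part $(\hat a)$, I would invoke Tolksdorf's interior $C^{1,\hat \ga}$-regularity theorem for weak solutions of $\mathcal{L}_p \hat v = 0$ (see \cite{T} and \cite[Chapter 6]{HKM}). The rescaled version of that estimate on $B(z_0, \rho)$ yields the first inequality in $(\hat a)$ directly. For the second inequality, I would combine the above with the standard pointwise gradient bound $\max_{B(z_0,\rho)} |\nabla \hat v| \leq c\,\rho^{-1}\max_{B(z_0,2\rho)}|\hat v|$, which follows from a Caccioppoli-type argument plus the De Giorgi--Nash--Moser iteration applied to $|\nabla \hat v|^p$ (a $p$-subsolution); both ingredients are in \cite[Chapter 6]{HKM}.

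For part $(\hat b)$, the approach would be to differentiate $\mathcal{L}_p \hat v = 0$ formally in the $x_k$-direction, $k=1,2$, obtaining a linear elliptic equation satisfied by $\partial_k \hat v$ on the open set $\{\nabla \hat v \neq 0\}$. Testing this linearized equation against $\eta^2 \partial_k \hat v$ for a cutoff $\eta \in C_0^\infty(B(z_0, \rho))$ with $\eta \equiv 1$ on $B(z_0, \rho/2)$, and summing over $k$, should produce the weighted $L^2$ second-derivative bound stated. The principal obstacle, and the step requiring real care, is justifying these manipulations across the degeneracy set $\{\nabla \hat v = 0\}$; the standard remedy is to first approximate $\hat v$ by solutions $\hat v_\e$ of the regularized uniformly elliptic equation
\[
\nabla \cdot\bigl((\e + |\nabla \hat v_\e|^2)^{(p-2)/2}\,\nabla \hat v_\e\bigr) = 0,
\]
establish the estimate uniformly in $\e > 0$, and then pass to the limit using the $\e$-uniform $C^{1,\hat \ga}$ bounds from part $(\hat a)$ applied to $\hat v_\e$ to secure the locally uniform convergence $\nabla \hat v_\e \to \nabla \hat v$.

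For part $(\hat c)$, the idea is that the Hölder continuity of $\nabla \hat v$ obtained in $(\hat a)$ implies that when $\nabla \hat v(z_0) \neq 0$, there exists $s > 0$ such that $|\nabla \hat v| \geq \tfrac{1}{2}|\nabla \hat v(z_0)| > 0$ throughout $B(z_0, s)$. On this ball the $p$-Laplace equation can be rewritten in nondivergence form as $\sum_{i,j} a_{ij}(\nabla \hat v)\,\partial_i \partial_j \hat v = 0$ with $a_{ij}(\xi) = \delta_{ij} + (p-2)\,\xi_i\xi_j/|\xi|^2$, a smooth and uniformly elliptic coefficient matrix on the region $\{|\xi| \geq \tfrac{1}{2}|\nabla \hat v(z_0)|\}$. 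A standard linear Schauder bootstrap (see \cite[Chapter 6]{GT}) then upgrades $\hat v$ from $C^{1,\hat \ga}$ to $C^\infty$ on $B(z_0, s)$. The overall proof is essentially an orchestrated citation of well-established results; the only genuinely technical step is the $\e$-regularization argument used to justify $(\hat b)$ across the degeneracy set.
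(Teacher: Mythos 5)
Your proposal is correct and follows essentially the same route as the paper, which simply cites Tolksdorf \cite{T} for $(\hat a)$ and $(\hat b)$ and notes that $(\hat c)$ follows from those two plus Schauder estimates from \cite{GT}. You have merely unpacked what those citations contain — the $C^{1,\hat\ga}$ interior estimate, the regularized testing argument for the weighted second-derivative bound, and the nondivergence-form Schauder bootstrap near a noncritical point — which is the same chain of reasoning the authors intend.
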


 \begin{proof}
  For  a  proof of \eqref{2.4} $(\hat a), (\hat b), $   see  for example  \cite{T}.  Now \eqref{2.4}    $ (\hat c)$ 
 follows from  $  (\hat a),  ( \hat b ), $  and Schauder type estimates (see \cite{GT}). 
 \end{proof}

\begin{lemma} 
\label{lem2.4}  
Let  $ x_0 \in \re$,  $\rho > 0$, $1 < p < \infty$,  and suppose $ \hat u$ and $\hat v$ are non-negative  
$p$-harmonic functions  in   $  \rn{2}_+ \cap  B  ( x_0, 4 \rho ) $    with  continuous  boundary values
$  \hat  v \equiv \hat u \equiv  0 $ on $ \re  \cap  B ( x_0, 4 \rho ).  $   There exists  $ c = c (p) $   such that  
\begin{align}
\label{2.5}   
\frac{\hat u (z)}{\hat v (z)}  \leq c   \frac{\hat u ( x_0 + \rho i)}{\hat v ( x_0 +  \rho i)}  
\quad \mbox{whenever}\quad   z \in  \rn{2}_+ \cap  B (x_0, 2\rho ). 
\end{align}
  Also $ \hat v $  has a $ p$-harmonic  extension to   
$ B ( x_0, 4 \rho) $  obtained  by  requiring    $ \hat v ( z )  = - \hat v  (\bar z )$ for $z  \in  B ( x_0, 4 \rho) \sem \rn{2}_+ .$    
\end{lemma}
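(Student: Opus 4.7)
The plan is to establish the two claims of Lemma~\ref{lem2.4} in reverse order: first the odd-reflection principle, and then the ratio bound \eqref{2.5}, which I would deduce by invoking the boundary Harnack principle of Aikawa--Kilpel\"ainen--Shanmugalingam--Zhong \cite{AKSZ}.

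\emph{Reflection.} Since $\hat v$ has continuous (hence trivially H\"older) zero boundary values on $\re \cap B(x_0,4\rho)$, the boundary $C^{1,\al}$ regularity theory of Tolksdorf \cite{T} yields $\hat v \in C^{1,\al}(\overline{\rn{2}_+}\cap B(x_0,3\rho))$. Because $\hat v\equiv 0$ on $\re \cap B(x_0,4\rho)$, differentiating tangentially gives $\partial_x\hat v(x,0)=0$, so $\nabla\hat v(x,0)=(0,\partial_y\hat v(x,0))$. Setting $\hat v(z):=-\hat v(\bar z)$ for $\operatorname{Im}z<0$, one checks $\nabla\hat v$ matches on both sides of $\re$, so the combined function is in $C^1(B(x_0,4\rho))$. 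To verify weak $p$-harmonicity, I would take $\he\in C_0^\infty(B(x_0,4\rho))$, split $\int_{B(x_0,4\rho)}|\nabla\hat v|^{p-2}\lan\nabla\hat v,\nabla\he\ran\,dxdy$ into the contributions from $B(x_0,4\rho)\cap\rn{2}_+$ and $B(x_0,4\rho)\cap\{\operatorname{Im}<0\}$, integrate by parts in each half (legitimate by the $C^{1,\al}$ regularity), and observe that the two boundary integrals over $\re$, equal to $\pm\int |\partial_y\hat v|^{p-2}\partial_y\hat v\,\he\,dx$, cancel because the outward normals to the two halves are opposite while $\partial_y\hat v$ is common.

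\emph{Ratio bound \eqref{2.5}.} With the reflection in hand, the cleanest route is to cite \cite{AKSZ}: after a standard localization replacing $\rn{2}_+\cap B(x_0,4\rho)$ by a bounded $C^{1,1}$ domain that coincides with $\rn{2}_+$ in a neighborhood of $\overline{B(x_0,3\rho)}$, the boundary Harnack principle of \cite{AKSZ} for positive $p$-harmonic functions vanishing on the smooth boundary portion, applied at the corkscrew point $A_0:=x_0+\rho i$, delivers \eqref{2.5} with a constant depending only on $p$. One can also approach this bound more directly: Carleson's inequality (Lemma~\ref{lem2.2}~$(+)$) yields $\max_{\rn{2}_+\cap B(x_0,3\rho)}\hat u\le c\,\hat u(A_0)$ and similarly for $\hat v$, and the gradient estimate of Lemma~\ref{lem2.3}~$(\hat a)$ applied to the reflected $\hat u$ gives the upper comparison $\hat u(z)\le c\rho^{-1}\hat u(A_0)\operatorname{Im}z$ on $\rn{2}_+\cap B(x_0,2\rho)$ via integration of $\partial_y\hat u$ from the axis.

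The main obstacle in any self-contained proof is the matching lower comparison $\hat v(z)\ge c^{-1}\rho^{-1}\hat v(A_0)\operatorname{Im}z$, a quantitative Hopf-type estimate that is genuinely nonlinear: linear arguments such as subtracting a multiple of $y=\operatorname{Im}z$ fail because differences of $p$-harmonic functions are not $p$-harmonic. Avoiding this delicate step is the reason I would invoke \cite{AKSZ}, whose box-iteration argument produces the two-sided ratio estimate directly in the smooth setting.
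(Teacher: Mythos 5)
Your proposal is correct and follows essentially the same route as the paper: the paper's proof of Lemma~\ref{lem2.4} consists of citing~\cite{AKSZ} for the ratio estimate~\eqref{2.5} and invoking Schwarz reflection for the odd extension, which is exactly what you do. Your elaboration of the reflection step via boundary $C^{1,\alpha}$ regularity and cancellation of the two flux integrals over $\re$, and your sketch of the upper half of the comparison, fill in details the paper leaves implicit but do not change the underlying approach.
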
  

\begin{proof}  
Here \eqref{2.5} in   Lemma    \ref{lem2.4}   follows  from essentially 
barrier estimates for  non-divergence form  PDE.   See for example \cite{AKSZ}.  
The extension  process for $ \hat v $   is generally referred to  as  Schwarz reflection.  
\end{proof}        
    
Next  given  $\eta > 0$ and $x_0 \in \re$ let  
\[
S  (x_0,  \eta ) :=    \{ z = x+ i y :\, \,  | x - x_0 | < \eta/2, \, \,  0 < y < \infty \}.
\]     
For short we write $ S (\eta)  $  when  $ x_0 = 0. $   For fixed $p$, $ 1 < p < \infty$, let   $  R^{1, p } (S (\eta)) $  
denote the Riesz space of  equivalence classes of functions $ f $ 
on $ \rn{2}_+  $  with  $ f ( z + \eta  )  = f ( z )  $  when $ z \in \rn{2}_+  $  and  norm  
\begin{align}
\label{2.6} \| f \|_* =   \| f \|_{*, p}   =    \left( \int_{S ( \eta )}   | \nabla  f |^p  \, dx dy \right)^{1/p}   <  \infty. 
\end{align}
Also let $ R^{1,p}_0 ( S ( \eta )) $    denote functions in  $ R^{1,p} ( S ( \eta ) ) $ which can be approximated 
arbitrarily closely   in the norm of  $ R^{1,p} ( S ( \eta ))  $  by   functions in  this space which are  
infinitely differentiable and vanish in    an   open    neighbourhood  of $ \re. $      
  It is well known,   see \cite[section 1]{W},   that  given  $ f  \in  R^{1, p} (S (\eta)) $  there exists a  unique 
$p$-harmonic function  $ \ti v $   on  $ \rn{2}_+ $   with   $ \ti v ( z + \eta  ) = \ti v ( z )$ for $z \in  
\rn{2}_+  $  with $ \ti v   - f  \in R_0^{1,p} (S(\eta)).   $  In fact the usual minimization argument 
yields that $ \ti v $ has minimum norm among all functions $ h $  in  $ R^{1,p} ( S ( \eta ) ) $ with $ h - f 
\in R_0^{1,p} (S (\eta)).$    Uniqueness of $ \ti v  $ is  a  consequence of  the maximum  principle  in  Lemma  \ref{lem2.0}.   

Next we state 
\begin{lemma}   
\label{lem2.5} 
Given $ 1 < p < \infty, $ let  $ \hat v $  be  $p$-harmonic in  $ \rn{2}_+ $   and  $ \hat  v \in  R^{1,p} ( S ( \eta ) ). $  Then there exists $  c = c (p) $ and $ \xi  \in \re $  such that  
\begin{align}
\label{2.7}   
| \hat v  ( z ) -  \xi |  \leq c\,   \liminf_{t \to 0} \left(  \max_{ \re \times \{t\} }  \hat v  -   \min_{\re \times \{t\} }   \hat  v  \right)   \,  \exp \left(  -  \frac{ y}{c \eta}  \right) 
\end{align}
 whenever $ z = x + i y \in  \rn{2}_+ . $    
 \end{lemma}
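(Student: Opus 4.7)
The idea is to show that the horizontal oscillation $\omega(t):=\sup_{\re\times\{t\}}\hat v-\inf_{\re\times\{t\}}\hat v$ decays exponentially in $t$, and to take $\xi$ to be the common limit of $\sup_{\re\times\{t\}}\hat v$ and $\inf_{\re\times\{t\}}\hat v$ as $t\to\infty$. Interior H\"older regularity (Lemma~\ref{lem2.3}) together with the $\eta$-periodicity in $x$ makes $\hat v(\cdot,t)$ a continuous $\eta$-periodic function of $x$, so the supremum and infimum are attained and $\omega(t)$ is a finite continuous function of $t>0$.

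The central step is a one-step decay: there exist $C_0=C_0(p)>0$ and $\theta=\theta(p)\in(0,1)$ with $\omega(t+C_0\eta)\le\theta\,\omega(t)$ for all $t>0$. First I would establish that $M(t):=\sup_{\re\times\{t\}}\hat v$ is non-increasing and $m(t):=\inf_{\re\times\{t\}}\hat v$ is non-decreasing in $t$ by applying the maximum principle (Lemma~\ref{lem2.0}) to $\hat v$ on bounded strips $\{t_1<y<t_2\}$ (where $\hat v$ is bounded by interior continuity and periodicity), using the finite energy hypothesis $\hat v\in R^{1,p}(S(\eta))$ together with the interior estimates (Lemmas~\ref{lem2.1}, \ref{lem2.3}) to rule out unbounded growth as $y\to\infty$. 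After translating by a constant so that $m(t)=0$, monotonicity of $m$ gives $\hat v\ge0$ on $\{y\ge t\}$. For the choice $C_0=5$ and any $x_0\in\re$, the ball $B((x_0,t+\tfrac{5\eta}{2}),2\eta)$ lies inside $\{y>t\}$, so Harnack's inequality (Lemma~\ref{lem2.1}(c)) with $\rho=\eta/2$ yields
\[
\max_{B((x_0,t+\tfrac{5\eta}{2}),\eta)}\hat v\le c\,\min_{B((x_0,t+\tfrac{5\eta}{2}),\eta)}\hat v.
\]
Varying $x_0$ and using $\eta$-periodicity to cover the full horizontal line then gives $M(t+\tfrac{5\eta}{2})\le c\,m(t+\tfrac{5\eta}{2})$. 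A dichotomy on whether $m(t+\tfrac{5\eta}{2})\ge \omega(t)/(2c)$ (in which case $\omega(t+\tfrac{5\eta}{2})\le(1-1/(2c))\omega(t)$) or $m(t+\tfrac{5\eta}{2})<\omega(t)/(2c)$ (in which case $M(t+\tfrac{5\eta}{2})<\omega(t)/2$, so $\omega(t+\tfrac{5\eta}{2})\le\omega(t)/2$) produces the decay with $\theta=\max\{1/2,\,1-1/(2c)\}<1$.

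Iterating the one-step bound gives $\omega(t+nC_0\eta)\le\theta^n\omega(t)$, and letting $t\to0^+$ along a sequence realizing the liminf produces the exponential estimate
\[
\omega(y)\le C\,\bigl(\liminf_{s\to0^+}\omega(s)\bigr)\,\exp(-y/(c\eta)),\qquad y>0,
\]
for $c=c(p)$ and $C=C(p)$. Since $M$ is non-increasing, $m$ is non-decreasing, and $M-m\to0$, both $M(y)$ and $m(y)$ converge to a common limit $\xi\in\re$, with $m(y)\le\xi\le M(y)$ for all $y>0$. Consequently $|\hat v(z)-\xi|\le M(y)-m(y)=\omega(y)$, which combined with the exponential bound above yields \eqref{2.7}.

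The main obstacle is the monotonicity of $M,m$, equivalently the extension of the maximum principle of Lemma~\ref{lem2.0} from bounded strips to the unbounded half-plane $\{y>t\}$; this is where the hypothesis $\hat v\in R^{1,p}(S(\eta))$ is crucial, since it is used (via the interior gradient estimate of Lemma~\ref{lem2.3}(a) and the oscillation estimate of Lemma~\ref{lem2.1}(a), both fed by the finite energy on each period cell) to preclude pathological growth of $\hat v$ at infinity. Once this is in place, the Harnack-chain one-step decay and its iteration are the standard Phragm\'en--Lindel\"of-type content.
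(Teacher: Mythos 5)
Your overall strategy — establish a one-step geometric contraction of the horizontal oscillation $\omega(t)=M(t)-m(t)$ via the maximum principle plus a Harnack chain, iterate, and identify $\xi$ as the common limit of $M$ and $m$ — is essentially the same as in the cited proof (Wolff's Lemma~1.3), which the paper itself only references as using ``$\eta$ periodicity of $\hat v$ and facts about $p$-harmonic functions similar to Lemmas~\ref{lem2.0} and \ref{lem2.1}.'' Your Harnack/dichotomy computation, given the monotonicity of $M$ and $m$, is correct.

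The one place where your sketch is imprecise is exactly the step you flag as the ``main obstacle,'' and the imprecision is worth naming. Applying Lemma~\ref{lem2.0} on a bounded strip $\{t_1<y<t_2\}$ gives only $M(t)\le\max\bigl(M(t_1),M(t_2)\bigr)$ and $m(t)\ge\min\bigl(m(t_1),m(t_2)\bigr)$ for $t_1<t<t_2$, i.e.\ quasi-convexity/quasi-concavity, not monotonicity. Moreover, ``ruling out unbounded growth as $y\to\infty$'' is not the relevant issue: finite energy together with Lemma~\ref{lem2.1}$(a)$ does give $\omega(y)\to 0$, and quasi-convexity plus $\omega\to 0$ forces $M(y)$ and $m(y)$ to converge to a common $\xi$, so boundedness is not in doubt. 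What is actually needed is to exclude the scenario in which $\hat v$ approaches $\xi$ monotonically from one side (say $M(t)<\xi$ for all $t$), which is perfectly compatible with boundedness but would destroy the claimed monotonicity of $M$ and hence the key positivity $\hat v-m(t)\ge 0$ on $\{y>t\}$. Ruling this out amounts to the Phragm\'en--Lindel\"of-type boundary maximum principle for the periodic finite-energy class $R^{1,p}$, namely that a $p$-harmonic $u\in R^{1,p}$ on a half-strip attains its supremum on the finite boundary; this is the $R^{1,p}$ version of Lemma~\ref{lem2.0} that the paper invokes implicitly in the discussion following \eqref{2.6} (uniqueness of the variational solution), not Lemma~\ref{lem2.0} on bounded subdomains. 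Once that form of the maximum principle is in hand, $M$ is non-increasing, $m$ is non-decreasing, and the remainder of your argument (Harnack chain, dichotomy with $\theta=\max\{1/2,1-1/(2c)\}$, iteration, and passage to the $\liminf$) goes through and reproduces \eqref{2.7}.
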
   
 
 \begin{proof} 
 Lemma \ref{lem2.5}  is proved in Lemma 1.3 of \cite{W} using $ \eta$  periodicity of  $\hat v$ and   facts about $p$-harmonic  functions   similar to 
  Lemmas  \ref{lem2.0} and \ref{lem2.1}.    
\end{proof}   

Finally, we state  (without proof)  an analogue  of  Lemma  \ref{lem2.5} for  $  B ( 0, 1) . $       

\begin{lemma}   
\label{lem2.6} 
Given $ 1 < p < \infty, $ let  $ \hat v $  be  $p$-harmonic in  $ B ( 0, 1), $     $ \hat  v \in  W^{1,p} ( B ( 0, 1)  ),  $  and  $  \hat v ( r e^{i\he} ) =  \hat v ( r e^{i (\he + \eta)}),  $ when  $ z = r e^{i\he} \in B ( 0, 1) $  and  $ 2\pi/\eta$ is a   positive integer.   Then there exists $  c = c (p)  \geq 1 $ such that  
\begin{align}
\label{2.8}   | \hat v  ( r e^{i\he} ) -  \hat v (0)  |  \leq  c\,  \liminf_{t \to 1}  (\max_{ B (0, t) } \hat v  -   \min_{B(0, t)} \hat  v  )   \, r^{\frac{ 1}{c \eta}} .  
\end{align}
\end{lemma}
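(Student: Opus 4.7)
The plan is to mirror the proof of Lemma \ref{lem2.5} in polar coordinates, with $N$-fold rotational symmetry ($N = 2\pi/\eta$) of $\hat v$ replacing translation-periodicity, and polynomial decay in $r$ replacing exponential decay in $y$. Without loss of generality assume $\hat v(0) = 0$. Set $M(t) = \max_{\bar B(0,t)} \hat v$, $m(t) = \min_{\bar B(0,t)} \hat v$, and $\omega(t) = M(t) - m(t)$; by the maximum principle (Lemma \ref{lem2.0}), $M$ is nondecreasing, $m$ is nonincreasing, $M(t) = \max_{|z|=t}\hat v$ and similarly for $m$, so $\omega$ is nondecreasing. Since $|\hat v(re^{i\theta}) - \hat v(0)| \leq \omega(r)$, it suffices to bound $\omega(r)$ by $c\,\omega(1^-)\,r^{1/(c\eta)}$, where $\omega(1^-) = \liminf_{t\to 1}\omega(t)$.

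The core of the argument is a one-scale oscillation estimate: there exist constants $\eta_0, \mu \in (0,1)$ depending only on $p$ such that for all $\eta \leq \eta_0$ and $r \in (0,1)$,
\[
\omega\bigl(r(1-\eta)\bigr) \leq \mu\,\omega(r).
\]
I would prove this by exploiting the $N$-fold symmetry: pick $p^+$ with $|p^+|=r(1-\eta)$ and $\hat v(p^+) = M(r(1-\eta))$. The rotated point $e^{i\eta}p^+$ is also a maximizer, so on the closed arc from $p^+$ to $e^{i\eta}p^+$ (one full angular period of length $\eta$), $\hat v$ attains the minimum $m(r(1-\eta))$ at some $p^-$, with $|p^+ - p^-| \leq r\eta$. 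Apply Harnack's inequality (Lemma \ref{lem2.1}(c)) to the nonnegative $p$-harmonic function $u = \hat v - m(r)$ on $B(0,r)$ along an $O(1)$-length Harnack chain from $p^+$ to $p^-$ using balls of radius $\rho = r\eta/4$ centered on the chord $[p^+,p^-]$; this choice ensures $B(z_0, 4\rho) \subset B(0, r)$ while still covering a chord of length $\leq r\eta$ by $O(1)$ balls. This produces $u(p^+) \leq C_1\,u(p^-)$, i.e., $M(r(1-\eta)) - m(r) \leq C_1\bigl(m(r(1-\eta)) - m(r)\bigr)$ with $C_1 = C_1(p)$. The symmetric application to $M(r) - \hat v$ yields $M(r) - m(r(1-\eta)) \leq C_1\bigl(M(r) - M(r(1-\eta))\bigr)$. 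Summing the two and rearranging gives the claim with $\mu = (C_1-1)/(C_1+1) < 1$.

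Iterating, $\omega(r(1-\eta)^k) \leq \mu^k\omega(r)$. For $0 < s \leq r < 1$, pick the largest $k$ with $r(1-\eta)^k \geq s$; since $-\log(1-\eta) \leq 2\eta$ for $\eta \leq 1/2$, we get $k \geq \log(r/s)/(2\eta) - 1$, hence
\[
\omega(s) \leq C\,(s/r)^{\log(1/\mu)/(2\eta)}\,\omega(r) = C\,(s/r)^{1/(c\eta)}\,\omega(r)
\]
for some $c = c(p)$. Letting $r \to 1^-$ gives the conclusion when $\eta \leq \eta_0$. When $\eta > \eta_0$, the exponent $1/(c\eta)$ is bounded above by a $p$-dependent constant, so the universal H\"older estimate (Lemma \ref{lem2.1}(b)) gives the claim directly after enlarging $c$ so that $\tilde\alpha \geq 1/(c\eta_0)$.

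The main obstacle is the balancing of the two competing constraints on the Harnack chain radius $\rho$: $\rho$ must satisfy $4\rho \leq r\eta$ so that the ball $B(z_0,4\rho)$ still fits inside $B(0,r)$, yet $\rho$ must be $\gtrsim r\eta$ so that only $O(1)$ balls are needed to span the chord of length $\lesssim r\eta$; otherwise the Harnack constant itself inflates with $1/\eta$ and destroys the linear scaling. The symmetric choice $\rho \asymp r\eta$ meets both requirements simultaneously and is precisely what produces the sharp $1/\eta$ rate in the final exponent.
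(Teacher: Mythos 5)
The paper states Lemma \ref{lem2.6} without proof, describing it only as the $B(0,1)$ analogue of Lemma \ref{lem2.5}, whose proof in Wolff's paper rests on periodicity together with Harnack-type estimates; your argument is precisely that adaptation, and it is correct. The one-scale oscillation decay $\omega(r(1-\eta))\leq\mu\,\omega(r)$ via the pair of Harnack chains on $\hat v-m(r)$ and $M(r)-\hat v$, with $\rho\asymp r\eta$ to balance containment in $B(0,r)$ against $O(1)$ chain length, is sound; the resulting algebra $\omega_2\leq\frac{C_1-1}{C_1+1}\,\omega_1$, the geometric iteration producing the $r^{1/(c\eta)}$ rate, the passage $r\to1^-$ to recover the $\liminf$, and the fallback to the interior H\"older estimate of Lemma \ref{lem2.1}$(b)$ when $\eta$ is bounded below all check out. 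A minor point worth noting in writeup: choose $\rho$ slightly smaller than $r\eta/4$ (say $r\eta/5$) so $B(z_0,4\rho)$ lies strictly inside $B(0,r)$ rather than merely in its closure, and observe that the existence of $p^\pm$ with angular separation at most $\eta$ uses that the restriction of $\hat v$ to each circle $|z|=r(1-\eta)$ is $\eta$-periodic, hence attains its global extrema on any arc of length $\eta$.
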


\setcounter{equation}{0} 
 \setcounter{theorem}{0}
   \section{Proof of  Theorem \ref{thmA}} 
   \label{sec3}  
   In this section we prove  Theorem \ref{thmA} and as stated in the introduction we  
   give  two proofs  of Theorem \ref{thmA} when $2<p<\infty$.           
 An important role in each proof  is played by  homogeneous  $ p$-harmonic functions of  the form:   
\begin{align}   
    \label{3.1} 
      z= re^{i\he}  \,  \to \,  r^{\la}  \, \phi ( \he )\quad \mbox{for}\, \,   |\theta|  <  \alpha  \, \, \mbox{and} \, \, r>0, 
    \end{align}    
    satisfying  $\phi (0) = 1$, $\phi  ( \alpha )  = 0$,  $\ph ( \he )  = \ph ( -  \he )$,  $ \ph' < 0 $ on $ (0, \al]$, and  $ \ph \in C^\infty ( [- \al, \al ]) $     with   $\la   =  \la(\al) \in ( - \infty, \infty ).  $    Regarding \eqref{3.1},   Krol'   in \cite{K}    (see also \cite{A})  used \eqref{3.1} and  
    separation of variables  to show  for $ 1 < p < \infty, $                
\begin{align*}
0 =&  \frac{d}{d\he}  \left\{ [ \la^2 \phi^2 (\he) + (\ph')^{2} (\he) ]^{(p-2)/2}\, \ph' ( \he ) \right\}\,   \\
 &+  \la [ \la (p-1) + (2-p)  ]   [ \la^2 \phi^2 (\he) + (\ph')^{2} (\he)  ]^{(p-2)/2}  \ph ( \he )  .   
\end{align*}  
 Letting  $ \psi = \ph' / \ph $ in the above equation and proceeding  operationally  he obtained, the first order equation  
\begin{align} 
\label{3.2}     
\begin{split}
  0 = &       ( (p-1) \psi^2 + \la^2 )  \,  \psi'      \\
           & +  ( \la^2  + \psi^2 )   [(p-1)  \, \psi^2   +    \la^2   (p-1) + \la (2-p) ].
\end{split}
\end{align}
   Separating  variables  in \eqref{3.2}  one gets    
\begin{align}
 \label{3.2a}  
  \frac{  \la  d \psi}{ \la^2 + \psi^2 }     -  \frac{ (\la - 1 ) \, d \psi }{ \la^2  + \psi^2  + 
\la (2-p)/(p-1) } +  d \he = 0.
\end{align}
Integrating  \eqref{3.2a} and using   
 $\psi (0) = 0$  we obtain  for  $ 0 \leq  |\he| < \al $  
  that  
\begin{align}
 \label{3.3} 
 (\la/|\la|)   \arctan (\psi /\la) -  \frac{ \la - 1}{ \sqrt{ \la^2 + \la ( 2 - p)/(p-1)}}   \arctan \left( \frac{ \psi}{\sqrt{ \la^2 + \la ( 2 - p)/(p-1)}} \right)   = -  \he  
  \end{align}
   Letting    $ \he  \to   \al $  from the left  and using   $ \psi ( \pm \al ) = - \infty$  we get   
\begin{equation}  
\label{3.4} 
\pm 1  -     \frac{ \la - 1}{\sqrt{ \la^2 + \la (2 - p)/(p-1) }} =\frac{2 \al}{\pi}    
\end{equation}  
where $ + 1 $ is taken  if  $ \la > 0 $ and  $ - 1 $ if  $ \la < 0. $  Using the   
quadratic  formula  it  is  easily   seen   that  for fixed  $  \al   \in  (0,  \pi ]   $   
each  equation  has exactly one  $  \la  $  satisfying it  and  $  \la  > 0 $  if  the  +  sign is  taken while 
$ \la < 0 $ if the   -   sign is  taken   in   \eqref{3.4}.  
Using  these  values of  $  \la $  it follows that the operational 
argument can now be made  rigorous by reversing the  steps  leading to  \eqref{3.4}.  
 Then  \eqref{3.2}, $\psi (0) = 0,$   and  calculus  imply   that  
 $  \psi $  is decreasing  and negative  on  $ (0,  \al)  $.   
 Integrating   $ \psi $  over   $ [ 0, \he )$,  $\he < \al$, and  exponentiating   
 it follows that  $  \ph >  0 $  is decreasing  on  $ ( 0, \al  )$ with $ \ph (\al) =  0. $ 
Symmetry and  smoothness  properties of  $  \ph $  listed above 
can  be  proved using  ODE  theory  or  Lemma  \eqref{2.4} $(\hat c)$ and Schwarz reflection.         

To avoid confusion  later on   let   $ -  \hat \la $  denote the value of  $ \la $  in   \eqref{3.4}  
with    $-1$  taken, $\al = \pi/2,$  and   let $ \hat \ph $  
correspond to $ - \hat \la $ as in  \eqref{3.1}  for given $ p$,  $1 < p < \infty.$    
After some computation  we  obtain from  \eqref{3.4} that    
 \begin{align} 
 \label{3.5}       
 \hat \la  = \hat \la(p) =  (1/3) \left( - p  + 3  + 2  \sqrt{ p^2 - 3p + 3 } \,  \, \,  \, \right)  / (p - 1 ) .  
 \end{align}
 
\subsection{Proof of  Theorem \ref{thm1.2}} 
In this section we provide a hands on proof of Theorem \ref{thm1.2} when $2<p<\infty$. To this end, 
given  $  0 < t < 10^{-10},  $ let     $ a(\cdot)$  be  a   $ C^\infty $  smooth function on  $ \re $
 with  compact support in  $ ( - t, t )$, $0 \leq a  \leq 1$ with   $a  \equiv  1 $  on  $ ( - t/2, t/2 ), $  
 and   $ | \nabla a | 
\leq   10^5 /t .$   Let     $ f ( z )   = a(x) a(y) $  when $ z = x + iy \in \rn{2} $   and  
 for fixed $ p$, $1  <  p \neq 2< \infty$   let  $ \hat u $  be  the  unique   $p$-harmonic 
 function on $  \rn{2}_+ $   with  $ 0 \leq \hat u \leq 1$ satisfying 
\begin{align}
 \label{3.6}    
  \int_{ \rn{2}_+ }   |\nabla \hat u |^p   dx dy   \leq  \int_{ \rn{2}_+ }   |\nabla  f |^p   dx dy   \leq   c \, t^{ 2 - p},
 \end{align} 
and $\hat u  - f  \in   W_0^{1, p} (\rn{2}_+ \cap  B ( 0, \rho ) ) $  whenever  $  0 < \rho < \infty.  $ 
Existence and uniqueness of  $ \hat u $  
  follows with  slight modification  from  the usual calculus of  variations argument  for bounded domains (see \cite{E}).   
       We  assert    that  
    there exists $  \be_* \in  ( 0, 1]$    such that if    $  z, w \in B (0, \rho) \cap \mathbb{\bar R}_+^2 , $ then  
\begin{align}
 \label{3.7}  
  |  \hat u ( z )  -  \hat u (w) |  \leq   c   \left( \frac{|z-w| }{ \rho  } \right)^{\be_*} 
 \quad  \mbox{and} \quad    |\hat u(z)|  \leq  c   \left( \frac{t}{|z|} \right)^{\be_*}\,  \, \mbox{for} \, \,  z \in \mathbb{\bar R}_+^2 . 
  \end{align}
The left hand inequality in  \eqref{3.7}  follows  from  Lemma \ref{lem2.2}.   To  prove the  right hand 
inequality in   \eqref{3.7} observe from  the  boundary maximum principle in Lemma \ref{lem2.0} 
 and $  0 \leq \hat u  \leq 1, $  that 
$ {\ds \max_{\ar B(0, r)} \hat u }$  is  decreasing for     $ r  \in ( t,  \infty). $  Using this  fact 
and   Harnack's inequality in  Lemma \ref{lem2.1} $(c)$   applied to $ {\ds  \max_{\ar B(0, r)} \hat u   - \hat u} $, 
and  \eqref{2.3} $(++)$  we  deduce the existence of  $  \he \in (0, 1) $ with
\[  
\max_{\ar B(0, 2r)} \hat u  \leq   \he   \max_{\ar B(0, r)} \hat u. 
\] 
 Iterating this inequality we  get the right hand inequality in  \eqref{3.7}. 

 Next we  claim that    
\begin{align}
 \label{3.9} 
 \hat  u (i )   \approx    t^{\hat \la}
  \end{align}
where   $ \hat \la $ is as in   \eqref{3.5}.  To prove   \eqref{3.9},   let  $  z  = r e^{i\he}$ for $r > 0$ and $0 \leq \he \leq \pi$, and put 
\begin{align}
 \label{3.10} 
 v ( z ) =  v(re^{i\he}) = (t/r)^{\hat \la} \, \hat  \ph ( \he - \pi/2 )
 \end{align}
 where $\hat \la$ and $\hat \ph$ as defined before \eqref{3.5}. Then $ v $ is $p$-harmonic in  $ \rn{2}_+ $ with $ v \equiv  0 $  on $ \re\sem \{0\}$   and $ v ( i t) = 1. $  
Also from  Harnack's inequality and   
\eqref{2.3}    of   Lemma  \ref{lem2.2} with $ \hat v  = 1 - \hat u$,  $\hat u,  $  we find that 
$ \hat u (i t) \approx 1. $   In  view of the boundary values of  $  \hat u$, $v $  and  $ \hat u (it) \approx  v (it) = 1,$ 
as well as  Harnack's inequality in \eqref{2.2} $(c)$,   we  see that  Lemma  \ref{lem2.4}  can be applied to get    
\begin{align} 
\label{3.11}   
 \hat u/v   \approx 1  
 \end{align}
     in  $   \rn{2}_+  \cap [ B ( 0, 4t ) \sem B ( 0, 2t)]. $    
From      \eqref{3.7} for $  \hat u,  v, $   and  $ \hat \la > 0 $     we find first that  
$ \hat u(z), v(z) \to 0 $  as  $ z  \to  \infty $ in      $  \rn{2}_+  $    and thereupon from 
Lemma \ref{lem2.0}  that    \eqref{3.11}  holds in  $ \rn{2}_+ \sem \bar B (0,2t). $    
Since $ v ( i)  = t^{\hat \la} $   we conclude  from \eqref{3.11} that  claim  \eqref{3.9} is true.

Finally  observe from  \eqref{3.5} that     for $ 1 < p < \infty$             
\begin{align} 
\label{3.12}  
\begin{split}
(3/2)  (p-1)^2 (p^2 - 3p + 3)^{1/2} \,   d \hat \la/ dp &=  (p-1)( p - 3/2) - (p^2 - 3p + 3) - \sqrt{ p^2 - 3p + 3 }  \\
&= p/2 - 3/2   - \sqrt{ p^2 - 3p + 3 }  < 0.
\end{split}
\end{align}
    Indeed,  the inequality in the second line in \eqref{3.12} is clearly true if  $ p \leq 3 $ and  for $ p > 3 $ is true because  
\[ 
( p - 3 )^2 <  4  ( p^2 - 3p + 3 )  \mbox{ or   }  0  < 3 ( p^2 - 2p + 1) = 3 (p-1)^2. 
\]  
  Since  $ \hat \la ( 2 ) = 1 $  we  see that  
\begin{align}
 \label{3.13} 
  \hat  \la (p) > 1 \mbox{ for }  1 < p < 2  \quad \mbox {and}\quad \hat \la ( p ) < 1 \mbox{ for }  p > 2. 
 \end{align}

Let  $ \ti a $ denote the one periodic  extension of $ a|_{[-1/2, 1/2]}$   to  $ \re. $ 
 That is  $ \ti a  ( x + 1 ) = \ti a ( x )$ for $x \in \re$ and  $ \ti a = a $ on $ [-1/2, 1/2]. $ 
 Also  let   $  \Psi  $   be  the  $p$-harmonic function on  
$ \rn{2}_+ $  with 
\begin{align}
 \label{3.14} 
 \begin{split}
 & (a) \hs{.2in}    \Psi ( z + 1 )  = \Psi ( z ), \quad \mbox{whenever} \, \, z  \in \rn{2}_+, \\
&(b)  \hs{.2in}  \Psi  -  \ti a(x)  a(y)  \in  R^{1,p}_0 ( S(1))   \quad \mbox{and} \quad 0 \leq \Psi \leq 1  \mbox{ in } \rn{2}_+ ,   \\
&(c)  \hs{.2in}   { \ds  \int_{S(1)}  | \nabla \Psi|^p  dx dy   \leq  c \, t^{2-p}  <  \infty,   } \\
&(d)  \hs{.2in}    { \ds  \lim_{ y \to \infty} \Psi ( x + i y )  = \xi  \quad \mbox{whenever}\, \, x \in \re. }  
\end{split}
\end{align}
Existence of  $  \Psi $ satisfying $ (a)-(d)$ of \eqref{3.14}  follows  from the discussion after  \eqref{2.6}, 
and \eqref{2.7} of   Lemma \ref{lem2.5}  (see also \eqref{3.6} for $(c)$).       
Comparing  boundary values  of  
 $ \hat u$ and $\Psi$  we see  that  $  \hat u \leq  \Psi $  on   $ \re. $  Using  this fact and Lemma  \ref{lem2.0}  we find in  view of    \eqref{3.7} that 
\begin{align}
 \label{3.15} 
  \hat u    \leq \Psi \quad   \mbox{in}\quad    \rn{2}_+ .  
 \end{align}
 From   \eqref{3.15}, \eqref{3.9},   and  Harnack's  inequality for  $  \hat u,  $   we have   
\begin{align}  
\label{3.16} 
\int_{0}^{1}  \Psi (x +  i  ) \, dx  =  \int_{-1/2}^{1/2}  \Psi (x + i  ) dx  \geq \int_{-1/2}^{1/2} \hat  u (x + i ) \,  dx \approx t^{\hat \la}. 
\end{align}
   Also from \eqref{3.14} and  \eqref{2.3a}   we obtain     
\begin{align}
 \label{3.17} 
 \int_{0}^{1}  \Psi (x + s i  ) \, dx    =   \int_{-1/2}^{1/2}  \Psi (x + s i  ) \, dx  \leq    c \,  t 
 \end{align} 
for some small $ s > 0.$  Thus
\begin{align} 
\label{3.18}   
\int_{0}^{1}  \Psi (x + s i  ) \, dx   \leq  c  t^{ 1 - \hat \la}  \int_{0}^{1}  \Psi (x + i ) dx  
\end{align}  
where $ c $  depends only on  $ p . $   Recall from  \eqref{3.13}  that  $ \hat \la  < 1 $  if  $ p  > 2 . $   
 So  from  \eqref{3.18} and  \eqref{3.14} $(d)$   we see for $ t > 0  $  sufficiently  small  that  

\begin{lemma} 
\label{lem3.1}   
Theorem \ref{thm1.2}   is  valid for  one of the four functions       $ \Ph (z )  = \pm ( \Psi ( z  + i)  -  \xi )  $         
or, for $ s > 0 $  small enough,  $   \Ph (z )  =   \pm ( \Psi  ( z  + is ) - \xi )$ whenever $z \in \rn{2}_+$. 
\end{lemma}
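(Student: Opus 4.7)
The plan is to take the candidate $\Phi$ to be one of $\pm(\Psi(z+i) - \xi)$ or $\pm(\Psi(z+is)-\xi)$, verify all of the conditions in Theorem \ref{thm1.2} that hold automatically, and then use the separation of scales between \eqref{3.16} and \eqref{3.17} together with the inequality $\hat\la < 1$ (see \eqref{3.13}) to rule out the possibility that both integrals vanish.

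First I would check the cheap conditions for $\Phi(z) = \pm(\Psi(z+ia) - \xi)$ with $a \in \{1, s\}$. One-periodicity in $x$ and the weak $p$-harmonicity on $\rn{2}_+$ are inherited from the corresponding properties of $\Psi$ by the translation invariance of $\mathcal{L}_p$. Boundedness $|\Phi|\le 2$ follows from $0 \le \Psi \le 1$. The energy bound
\[
\int_{(0,1)\times(0,\infty)} |\nabla \Phi|^p\, dx\, dy \;\le\; \int_{S(1)} |\nabla \Psi|^p\, dx\, dy \;\le\; c\, t^{2-p}
\]
is just \eqref{3.14}(c) restricted to the shifted strip, and $\lim_{y\to\infty}\Phi(x+iy)=0$ is exactly \eqref{3.14}(d). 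For the Lipschitz property on $\overline{\rn{2}_+}$, the point is that $\Phi(x+iy)$ only samples $\Psi$ at heights $\ge a > 0$; on such a half-plane $\Psi$ lives in the interior of its domain of $p$-harmonicity, so the interior $C^{1,\hat\ga}$ estimate \eqref{2.4}$(\hat a)$ applied on balls of radius $\approx a$, together with $\|\Psi\|_\infty \le 1$, gives $|\nabla \Psi| \le c(a,p)$ on $\{y\ge a\}$, which is more than Lipschitz on the boundary.

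The crux is producing a $\Phi$ with $\int_0^1 \Phi(x)\, dx \neq 0$. Here the strategy is to argue by contradiction: suppose that both
\[
\int_0^1 \bigl(\Psi(x+i)-\xi\bigr)\, dx = 0 \quad\text{and}\quad \int_0^1 \bigl(\Psi(x+is)-\xi\bigr)\, dx = 0.
\]
Then $\xi$ equals both $\int_0^1 \Psi(x+i)\, dx$ and $\int_0^1 \Psi(x+is)\, dx$. But \eqref{3.16} forces $\xi \ge c^{-1} t^{\hat\la}$, while \eqref{3.17} forces $\xi \le c\, t$, so that
\[
c^{-1} t^{\hat\la} \;\le\; \xi \;\le\; c\, t, \qquad\text{i.e.}\qquad c^{-2} \le t^{1-\hat\la}.
\]
Since $p>2$ gives $\hat\la < 1$ by \eqref{3.13}, choosing $t>0$ sufficiently small (depending only on $p$ and the constants in \eqref{3.16}, \eqref{3.17}) produces a contradiction. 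Consequently at least one of $\int_0^1(\Psi(x+i)-\xi)\, dx$, $\int_0^1(\Psi(x+is)-\xi)\, dx$ is nonzero; choose this integer together with a sign $\pm$ to define $\Phi$, and all the hypotheses of Theorem \ref{thm1.2} are met.

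The only genuinely delicate point I anticipate is tracking how $s$ enters \eqref{3.17}: one needs $s$ small enough that $\int_0^1 \Psi(x+is)\,dx$ is controlled by the boundary data magnitude $\approx t$ via the interior H\"older estimate \eqref{2.3a} applied at height $\approx t$, rather than by $\xi$ (which is the limit as $y\to\infty$). Everything else is bookkeeping around Lemmas \ref{lem2.0}--\ref{lem2.3}. Note that the argument really uses $\hat\la < 1$, which is exactly what fails at $p=2$ (Fatou's theorem does hold for harmonic functions) and why the analogous construction for $1 < p < 2$ requires the finesse/perturbation approach mentioned in the introduction.
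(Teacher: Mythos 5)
Your proposal is correct and takes essentially the same route as the paper: the paper proves Lemma \ref{lem3.1} by combining \eqref{3.16}, \eqref{3.17}, and \eqref{3.14}$(d)$ into the ratio bound \eqref{3.18}, then invoking $\hat\la<1$ from \eqref{3.13} to conclude the two averages cannot both equal $\xi$ once $t$ is small, which is exactly your contradiction argument phrased additively rather than as a ratio. The extra bookkeeping you supply (translation invariance, periodicity, the energy bound inherited from \eqref{3.14}$(c)$, the Lipschitz bound from interior regularity at height $\ge a$) is consistent with what the paper leaves implicit.
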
 
  
This  completes the hands on  proof  of  Theorem \ref{thm1.2}  when $ p > 2.  $

\begin{remark} 
\label{rmk1.2}  
The above   proof of  Theorem \ref{1.2}  fails when $1<p<2 $ as now $ \hat \la  > 1,  $   
so  $ t^{1 - \hat \la }  \to \infty  $ in \eqref{3.18}  as $ t \to  0. $     In short,  
our hands on example  could still be valid  for $ 1 < p  < 2, $    
but  in this case one needs  to make  a  better estimate than \eqref{3.18}.    
\end{remark}    

\subsection{Hands on proof of  Theorem  {\ref{thmA}}  when $2<p<\infty$}  

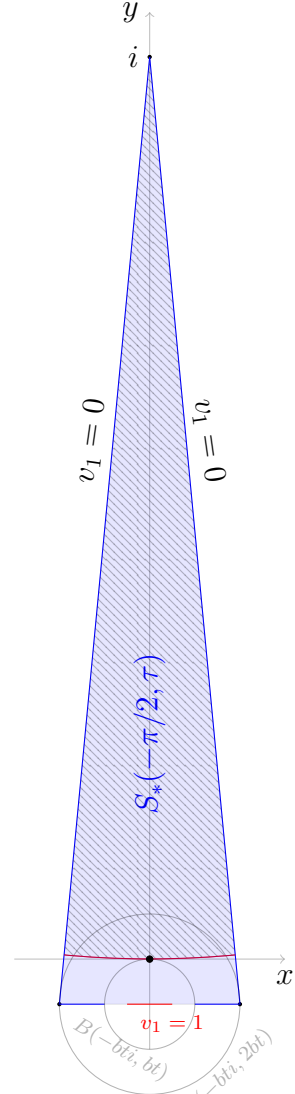
\begin{wrapfigure}{r}{0.4\textwidth}
\vspace{-20pt}
  \begin{center}
\begin{tikzpicture}[scale=1.2]
\coordinate (A) at (0,10);
\coordinate (b-) at (-1,-.5);
\coordinate (b+) at (1,-.5);

\draw[->, gray, opacity=.5] (-1.5,0)--(1.5,0);
\node[below] at (1.5,0) {$x$};
\draw[->, gray, opacity=.5] (0,-1)--(0,10.5);
\node[left] at (0,10.5) {$y$};
\draw[fill] (A) circle (.5pt) node[left] {$i$};
\draw[fill] (b-) circle (.5pt); 
\draw[fill] (b+) circle (.5pt); 

\draw[fill=blue, opacity=.1] (A)--(b+)--(b-)--cycle;
\node[blue, rotate=90] at (0,2.5) {$S_{*}(-\pi/2, \tau)$};
\draw[red] (-.25, -.5)--(.25,-.5) node[below] {\tiny $v_1=1$};

\draw[blue] (A)--(b+)--(.5,-.5);
\draw[blue] (-.5,-.5)--(b-)--(A);

\node[rotate=-82] at (.65, 5.75) {$v_1=0$};
\node[rotate=82] at (-.65, 5.75) {$v_1=0$};
\draw[opacity=.3] (0,-.5) circle (.5cm);
\node[opacity=.3,below, rotate=-30] at (-.2,-.8) {\tiny $B(-bti, bt)$};
\draw[opacity=.3] (0,-.5) circle (1cm);
\node[opacity=.3,below, rotate=45] at (.7,-1.1) {\tiny $B(-bti, 2bt)$};

\begin{scope}
\clip (A)--(b+)--(b-)--cycle;
\draw[purple, pattern=north west lines, opacity=.5] (A) circle (10cm);
\draw[purple] (A) circle (10cm);
\end{scope}
\draw[fill] (0,0) circle (1pt);
\end{tikzpicture}
  \end{center}
  \vspace{-20pt}
  \label{figure1}
  \caption{Domain $T$ and $S_{*}(-\pi/2, \tau)$}
\end{wrapfigure}

 \begin{proof} 
To   provide  examples  
 in  $ B ( 0, 1 ), $    satisfying  Theorem  \ref{thmA},  we need  to make somewhat  
 better estimates than in Lemma  \ref{lem3.1}  since  $p$-harmonic  functions  
 are  not invariant under dilatation in  polar coordinates.    For this purpose let    
 $ 0 <   b < <  t   < <  10^{-10} $.  For  the moment  we allow both  $ b$ and $t $ 
  to vary subject to these requirements but shall later  fix  $ t = t_0 $ and then 
  essentially  choose  $ b_0  < < t_0 $  so that if  $ 0 < b  \leq  b_0,  $  then  
  Theorem \ref{thmA}  is true for  our  examples.  To begin the proof,   let $ T   $  
  be  the triangular region whose boundary consists of    the horizontal  line 
  segment from  $  - b  -  b t \, i $  to   
$ b  - b t i $  and the  line segments joining $  i $  to  $  \pm b  -  b t i$ (see Figure 1).   
 Let    $ v_1 $ be the $p$-harmonic function in   $ T $    with  $ v_1 (z)  -   f ( z/b + t i)   \in W^{1,p}_0 ( T ) $ 
 where $ f $ is defined above   \eqref{3.6}.  Then from Lemma \ref{lem2.2} and  
 translation, dilation invariance of $p$-harmonic functions,  we see that  $ v_1$   
 has continuous boundary values with $  v_1  \equiv  1$ on  the open line 
 segment from   $ - bt/2  - b t i $ to   $  bt/2  - b t i, $ and $ v_1  \equiv 0 $ 
 on  $ \ar T \sem  \bar B ( - b t i,  bt ). $   
From the  definition   of  $ \hat u $ above  \eqref{3.6}   we find that   
\begin{flalign}
 \label{3.19}    
 \qquad v_1 (z )   \leq  \hat u ( z/b   + t i ) &&
 \end{flalign}
 in the  $ W^{1,p} $ Sobolev sense, when   $ z = x + iy  \in \ar T. $  
 Thus by  Lemma  \ref{lem2.0}  this inequality  holds in $ T. $   
 Also from \eqref{3.9},   \eqref{3.19},    and Harnack's inequality   we get            
\begin{flalign}
  \label{3.20}    
 \qquad v_1 ( b i )  \leq  c \, \hat u (  i ) \approx  t^{ \hat \la}. &&
  \end{flalign}    
  On the  other hand  since  both functions in \eqref{3.19}  have the same 
  boundary values  on  $  \ar T  \cap  \{ z = x -  b t i  :  - b   \leq  x \leq b \}  $ 
  it  follows   from \eqref{3.20},  \eqref{lem2.2},  Lemma \ref{lem2.4}, and 
   Harnack's inequality  that   
\begin{align} 
\label{3.21}   
\hat  u ( z/b  + t i)   \leq    c_+   (v_1( z )   +  t^{\hat \la}) \quad  \mbox{for}\, \, z  \in T \cap  \bar B (- i b t, 2 b ). 
\end{align}   
Also  from \eqref{3.11}  and the definition of $ v $  we have   
\begin{align} 
\label{3.22}   
\hat  u (i/\breve{c} )   \geq    2 c_+    t^{\hat \la}  
\end{align}
provided  $ t $ is small enough,  say  $ t \leq t_1,$ and  $\breve c $ is large enough   
where $ \breve c, t_1,  $  depend on  $ c_+ $  so only on $ p  > 2. $ 
   Using  \eqref{3.22}  in \eqref{3.21}  with  $  z   =  \frac{ -  \breve{c} \, t + 1 }{\breve{c}  }\, b  i $    we  obtain first that 
   $  v_1 (   \frac{ -  \breve{c}\,  t+ 1 }{\breve{c}  }\,  b  i )  \geq    t^{\hat \la}, $  and  
   second from Harnack's inequality for $ v_1 $ that   
\begin{align} 
\label{3.23}  v_1 ( b i )  \approx  t^{\hat \la}.  
\end{align}
Next if  $ \he_0 \in \re$ and $\eta  > 0$, we   let 
\[   
   S_* ( \he_0, \eta )  :=  \{ z: z = i + \rho  e^{i\he} :\, \,  0 \leq \rho < 1,\, \,   |\he - \he_0| < \pi  \eta  \}. 
\]    
   From high school geometry we  see that  if   $ \pi \tau   =  \arctan( \frac{ b}{1 +bt}), $ 
    then the rays  $ \he = -\pi/2 \pm \pi \tau $  drawn from $ i $ to  $ \pm  b  -  bt i $  
    make an angle $ \pi  \tau $ with the  $ y $ axis  and consequently   (see Figure 1)
\[ \bar T    \cap    \ar B (  i, 1 )  =  \ar S_* ( - \pi/2, \tau )  \cap  \ar B ( i, 1 ).  \] 
Given   $ N $   a  large  positive integer  choose  $ b $  so that $  \tau = N^{-1}  \approx  b. $    
We  claim  that  
\begin{align} 
\label{3.24}  
\int_{ \bar T \cap \ar B ( i, 1 ) }  v_1 ( z )  |dz|    \leq  c \, b \, t. 
\end{align}
 To prove \eqref{3.24}  we parametrize    $ \bar T \cap  \ar B ( i, 1)   $  by $  z (x)    =  x + i y (x)$ for $  
 - s   \leq   x  \leq  s$ where   $ s \approx b$ (so $y = 1  -  \sqrt{1-x^2}$).  Then from \eqref{3.19},   \eqref{3.11}, $ b < < t, $   
 and the fact that in \eqref{3.10},   $  \hat \ph (  \theta  - \pi/2) \leq  c \min (\theta  ,  \pi - \theta ) $  
 for $\theta \in [0, \pi], $      we see  as in   the proof of  \eqref{3.23}  that if $ 2bt  \leq  |x|  \leq  b$ then $|dz| \approx dx$ and 
\begin{align} 
\label{3.25}   
v_1 ( z (x) )   \leq c v ( z (x)/b + t i  )  \leq   c^2  ( b t)^{\hat \la} \,  |x|^{- \hat \la }  \, \left( \frac{|x|^2  +  b t}{  |x| } \right)  \approx   ( b t)^{1 + \hat \la}  |x|^{- \hat \la -  1  }.
\end{align}   
Thus   
  \[   
  \int_{ \bar T \cap \ar B ( i, 1 ) }  v_1 ( z )  |dz|  \leq c b t + c   \int_{b t}^b  ( b t)^{1 + \hat\la}   x^{- \hat \la   -  1 } dx 
 \leq   c^2   b t ,  \,  
 \]   
 so    \eqref{3.24} is true.    Let   $   \breve h (z)  = v_1 ( z) $ when $  z  \in \bar T  \cap \ar B ( i, 1)  $  
 and extend  $ \breve h $    to   $ \ar B ( i, 1) $  by  requiring that  
$ \breve h ( i + e^{ i \he} )  = \breve h ( i  +  e^{ i (\he + 2 \pi/N)} )$ for $\he  \in \re .$    
Let  $ \breve v $  be  the  $p$-harmonic function in  $ B ( i, 1) $  with  $ \breve v \equiv  \breve h  $ 
on $ \ar B ( i, 1) $ in the   $ W^{1, p} $  Sobolev sense.   From the usual  calculus of  variations argument we see that     
\begin{align} 
\label{3.26}
\begin{split}
&(a') \hs{.2in} 0 \leq  \breve v (i +  t e^{i\he} ) =  \breve v (i +  t  e^{i ( \he + 2\pi/N) } )   \leq 1 \quad \mbox{for}\, \, 0 \leq t  \leq  1, \, \,  \he  \in \re,    \\
&      (b')  \hs{.2in}   { \ds \int_{  S_*  ( - \pi/2, \tau)  }  |\nabla \breve v|^p  dx dy \leq  \int_{ T }   |\nabla v_1 |^p  dx dy \,}  \leq \, c \, (t/N)^{2 - p} .
\end{split}
\end{align}
  
 We   assert  that    
\begin{align}
 \label{3.27} 
 \begin{split}
 &(c')       \hs{.2in}  {\ds   \int_{ -  \pi}^{ \pi}  } \breve  v ( i  +  e^{i\he} ) d \he  \leq  c t,\quad      \mbox{ and }\quad   \breve v (i)   \geq   c^{-1} t^{\hat \la} ,  \\
 & (d') \hs{.2in}   | \breve v ( z ) - \breve v (w) | \leq  c \,(N/t)   \, \,    | z - w |  \quad \mbox{whenever}\, \,  z, w  \in  \ar B ( i, 1) .
 \end{split}
\end{align}
The left-hand inequality in  \eqref{3.27} $ (c') $  follows from  \eqref{3.24},   \eqref{2.3a}  of Lemma \ref{lem2.2},   
$ v_1 = \breve v $ on 
$ \bar T \cap \ar B ( i, 1), $  and  \eqref{3.26}  $ (a')$.  
To prove the  right-hand inequality in \eqref{3.27} $(c'),$ we note that   
$ \breve v \geq c^{-1} t^{\hat \la} $  on  $  \ar  B ( i, 1 -  1/N ), $  as we see from    
Harnack's inequality for $ 
\breve v, $  \eqref{3.23}, $ v_1 \leq  \breve v $ in $\bar T \cap \bar B (  i, 1  ) ,$ 
and  \eqref{3.26}  $(a')$.   This  inequality and the minimum principle for $p$-harmonic
 functions  give the   right-hand inequality in   \eqref{3.27} $(c').$    To prove  \eqref{3.27} $(d'),$   
 let  $  z \in  \bar T \cap \ar B ( i, 1 ), z_0 \in \ar T,  $  and suppose $ | z_0 - z| $  is   
 the distance from  $ z $ to  $ \ar T. $   If   $  | z - z_0 | \geq bt/4,  $  then  $ v_1 $  is  
 $p$-harmonic in  $  B ( z,  bt/4). $  Otherwise   
   from  Schwarz reflection we see that  $ v_1 $ has 
a $p$-harmonic  extension to  $ B ( z_0, b t/2)  $ (also denoted $ v_1). $  
 Thus in either case $ v_1 $ is $p$-harmonic in $ B ( z, bt/4) $  so  from \eqref{2.4} $(\hat a)$   of  Lemma \ref{lem2.3},  we   have 
\begin{align} 
\label{3.28}  
|  \nabla v_1 |   ( z)  \leq   c \, \,  (\max_{B ( z,  bt/4 )} |v_1| )  / bt  \leq  c/bt.  
\end{align}   
Now \eqref{3.28} and $ \breve  v = v_1 $  on $  \bar T \cap  \ar B  ( i, 1) $  give    \eqref{3.27} $ (d'). $  
    We now  choose  $ 0  < t_0  <  t_1 < 1  <    N_0, $ depending only on $ p > 2, $   so that if  $ N $ 
     is a positive integer with $ N  \geq N_0$, then  $  \eqref{3.26}, \eqref{3.27}, $  are valid  with $ t = t_0$ and also, 
\begin{align}
  \label{3.29} 0 \leq  \int_{ -  \pi}^{ \pi}    \breve v ( i + e^{i\he} ) d \he    \leq   \breve v (i)/2.  
  \end{align}
With  $ t_0 $ now fixed, put  
\[ 
 V ( z) =  \breve v(i)  -   \breve v ( z + i)  \quad \mbox{whenever} \, \, z \in B (0, 1). 
 \]     
 Then  from   \eqref{3.26}, \eqref{3.27}, \eqref{3.29}, we 
conclude that Theorem \ref{thmA}  is  valid for fixed $ p > 2$.   
\end{proof}     

\subsection{Finesse Proof of  Theorem A for $1 < p\neq 2 < \infty$} 
  In this section  we  give  a  proof  of  Theorem  \ref{thmA} valid for $ 1 < p < \infty , $  
  modelled on proofs of  Wolff \cite{W}  and  Varpanen \cite{V}, which however does not produce explicit  examples.   
   To this end,  we note  that   Wolff  (see also \cite[Section 3]{DB})   constructed  for fixed $ p$,  $1 < p\neq 2 < \infty$,   
   a  $p$-harmonic  function, $ F, $   of  the  form    
\begin{align}  
 \label{3.29a}     
 \begin{split}  
F (z ) =  F ( x + i y ) =  e^{ - \ga y }  f ( x)  \quad \mbox{for}\, \,  z \in  \rn{2}_+
 \end{split}
 \end{align} 
 where $\ga  > 0$ and $f$ satisfies
 \begin{align*}
 \begin{cases}
  f ( x  +  2\pi   )  = f  ( x  ) = f ( - x ), \\
  f ( \pi/2 - x )  = -  f ( \pi/2 + x ), \\
 f ( 0 ) = 1 \, \, \mbox{and} \, \,  f (  \pm \pi/2 ) = 0,\\
 f' (0) = 0\, \, \mbox{and}\, \, f'<0.
 \end{cases}
 \end{align*}
 Using $   F$ and $p-$harmonicity,   and separation of variables, it follows from  \eqref{3.29a}, as in   \eqref{3.2a},      that if  
$  \si (x)  = f' ( x )/ f ( x)$ whenever $x \in [0, \pi/2]$  then  
\begin{align} 
\label{3.30} 
-   \frac{d\si}{dx}  =   (p-1)  \frac{ ( \ga^2  +  \si^2)^2 }{  \ga^2  +  (p - 1) \si^2}, \quad  \si  ( 0 ) =  0\, \, \mbox{and}\, \,  \si (\pi/2) = - \infty 
\end{align} 
where the last equality means as a limit from the left.   Integrating \eqref{3.30}  we  get        
\begin{align}
\label{3.31}  
\frac{ p}{ 2 (p - 1) \ga }  \arctan (\si  (x) /\ga)   -  \frac{(p-2) \si  (x)  }{2 (p-1) (\si^2(x) +  \ga^2) }  =  -  x.  
\end{align}  
where we have used $ \si  (0) = 0. $   Letting  $ x \to  \pi/2 $ it follows from \eqref{3.31} and 
$ \si  ( \pi/2 )  =  - \infty $    that     
\begin{align} 
\label{3.32} 
 \ga = \frac{p}{2 ( p - 1)}.   
\end{align}
Next  we take   the  $ + $  sign  and   $ \al = \pi/(2N) $  in  \eqref{3.4}.     We obtain         
\begin{align}  
\label{3.33}     
1/N   =     1      -  ( 1 -  1/\la ) ( 1 - {\ts  \frac{ (p  - 2)}{\la (p-1)} } )^{-1/2}    \, \,  .
\end{align}    
Now  since  $ \la > 0$ and $N   >  1$,  we  see from   \eqref{3.33}  that  $  \la > 1. $    Using this fact and
 taking logarithmic derivatives of the   right-hand side of  \eqref{3.33} with respect to $ 1/\la, $ 
 we find that it  is    decreasing as  a  function of  $   1/\la . $  Thus  $  \la \to \infty  $  as  $ N  \to \infty.  $   
 Expanding   \eqref{3.33} in powers of $ 1/\la $  we  obtain   
\begin{align}
\label{3.34} 
\begin{split}
1/N  &=    1  -    ( 1 - 1/\la )  [   1   +    {\ts  \frac{  (p  - 2)}{2\la (p-1)} } +    O ( 1/\la^2 ) ]   \\
  &   =     \frac{ p}{2 (p-1) \la}  +  O ( 1/\la^2 )  \, \, \, \mbox{as} \, \, \, \la \to \infty.  
\end{split}
\end{align}  
    From  \eqref{3.34} we conclude that   
\begin{align}  
\label{3.35} 
 \frac{p}{2 (p-1)} N    =  \ga N  =    \la   +  O ( 1 )  \quad \mbox{as}\, \,  N  \to  \infty  
\end{align} 
 where $ \ga $ is as in  \eqref{3.32}.   Now suppose for the rest of the proof of  Theorem \ref{thmA} 
 that $ N \geq 10^{10} $ is   a positive  integer.    Let  
\[
    \la  =  \la ( \pi/(2N), p )\quad \mbox{and}\quad  \ph  =  \ph ( \cdot, \pi/(2N), p  )
\]
 be the value and  function in \eqref{3.1}  corresponding to  $  \al =  \pi/(2N). $   
 Then  $  \ph ( \pm  \pi/(2N) )   = 0 $  so from  Schwarz reflection  with $  \re $   replaced 
 by  $  \he =  (2k-1 ) \pi / (2N)$ for $k = 1, \dots,  N $  (see Lemma  
\ref{lem2.4}) it follows that  $ z = r e^{i\he} \to  r^{\la} \ph ( \he )  $ extends  to a  
$p$-harmonic function in   $  \rn{2}   \sem \{0\}, $  which is $ 2 \pi/N $  periodic in the $ \he $  variable. 
 Moreover since $ \la  > 1  $  in   \eqref{3.33},  we  see that  if  $ G ( z) =  G_N (z)  $ denotes this 
 extension and we define $  G ( 0) = 0, $  then   $ G $   is   $p$-harmonic in $ \rn{2}.  $   
 Let  $ g ( x  )  = g_N ( x )  = \ph ( x/N )$ for $x \in \mathbb{R}$   where we now  regard $ \ph = \ph ( \cdot, N ), $  
 as defined on $ \re. $   Then 
\begin{align} 
 \label{3.36} 
\begin{split} 
& (\al)  \hs{.1in}   g = g_N ( \cdot )  \mbox{ is  $ 2 \pi $  periodic on $ \re, $      $  g ( x )  = g ( - x )$,  $ g ( \pi/2 + x ) =  - g ( \pi/2 - x ),  $}  \\    
&\mbox{ \hs{.23in} for $  x \in \re,$  and  $ g' \leq  0  $  on  $ (0, \pi/2],   g (  \pm \pi/2) = 0, $}\\ 
& (\be)  \hs{.1in}    \mbox{ $    {\ds \max_{\re} }| g | = 1 = g(0) $ and  $ c^{-1 } \leq | g' ( x ) | + |g(x)| \leq  c , x \in \re,$  where $ c = c(p).  $ }  
\end{split}
\end{align}
Here   \eqref{3.36}  $ ( \al ) $  and the left hand inequality in  \eqref{3.36} $ ( \be), $ follow from the properties of $  \ph $ 
 listed after \eqref{3.1}   and discussed after  \eqref{3.4}.      To get the estimate from below in the 
 right hand inequality  of   \eqref{3.36} $ (\be) $ observe from Harnack's inequality and  \eqref{3.36} $(\al)$  
  that we only need prove this inequality for $ x $  near $ \pi/2$. Now comparing  $ G $ to a linear function
   vanishing on the rays $ \he = \pm \pi/2, $  using  Lemma \ref{lem2.4} with $ \hat u = G,  \hat v $  a linear 
   function vanishing  on  the ray $ \he = \pi/2, $ and taking limits as $ z   \to e^{i \pi/2}, $  we deduce  
   $ c^{-1} \leq g' ( \pi/2)  \leq c. $    The rest of  \eqref{3.36}   $(\be)$   follows  from \eqref{3.35} and   Lemma \ref{2.3}. We prove

\begin{lemma} 
\label{lem3.3}  
For fixed $ p$,  $1 < p\neq 2 < \infty$,  let $ f $   be as in \eqref{3.29a} and $ g = g_N  $   as in  \eqref{3.36}.  Then 
   $ g_N^{(k)}  ( x)   \to  f^{(k)} (x) $ as $ N  \to \infty, $   uniformly  on $  \re $ for  $ k = 0, 1, 2, \dots$.  
   \end{lemma}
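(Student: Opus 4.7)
The plan is to reduce the identity $g_N^{(k)}\to f^{(k)}$ to continuous dependence of ODE solutions on a parameter. First I would derive a nondegenerate second order ODE for $g_N$ by substituting $\ph(\he)=g_N(N\he)$ into the angular ODE for $\ph$ recorded just after \eqref{3.1}, clearing the $A^{(p-4)/2}$ factor. Writing $\ga_N=\la/N$ and using \eqref{3.35} so that $\ga_N\to\ga=p/(2(p-1))$ and $(2-p)/N\to 0$, a straightforward rescaling by $N^4$ gives an ODE of the form
\begin{align*}
\bigl((p-1)(g_N')^2+\ga_N^2\,g_N^2\bigr)\,g_N''\;=\;F_N(g_N,g_N'),
\end{align*}
where $F_N$ is a polynomial in $(g_N,g_N')$ whose coefficients are polynomials in $\ga_N$ and $(2-p)/N$. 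An analogous algebraic manipulation of \eqref{3.30} (using $\si=f'/f$ and $\si'=f''/f-\si^2$) shows that in the formal limit $N\to\infty$ the ODE above becomes exactly the ODE that $f$ satisfies, with the same symmetries and with $f(0)=1$, $f'(0)=0$.

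Next I would establish uniform a priori bounds on $g_N$ and all its derivatives on $\re$. From \eqref{3.36}$(\be)$ one has $|g_N|\le 1$ and $c^{-1}\le |g_N(x)|+|g_N'(x)|\le c$, so
\[
(p-1)(g_N')^2+\ga_N^2 g_N^2\;\ge\;\min(p-1,\ga_N^2)\,\bigl((g_N')^2+g_N^2\bigr)\;\ge\;c^{-1}>0
\]
uniformly in $x\in\re$ and $N\ge N_0$. Hence the ODE can be solved for $g_N''$, yielding $|g_N''|\le c$ on $\re$. Differentiating the ODE and inducting, one obtains $|g_N^{(k)}|\le c_k$ on $\re$ for every $k\ge 0$, with $c_k$ depending only on $p$ and $k$, because the leading coefficient retains the same uniform lower bound and $\ga_N,1/N$ remain uniformly bounded.

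To pass to the limit I would use periodicity: $g_N$ is $2\pi$-periodic by \eqref{3.36}$(\al)$ and $f$ is $2\pi$-periodic by the hypotheses in \eqref{3.29a}, so uniform convergence on $[-\pi,\pi]$ gives it on $\re$. Given any subsequence of $\{g_N\}$, the uniform $C^{k+1}$ bounds together with Arzel\`a--Ascoli and a diagonal argument produce a further subsequence converging in $C^k([-\pi,\pi])$ for every $k$ to some $C^{\infty}$ function $h$. Passing to the limit in the ODE, in \eqref{3.36}$(\al), (\be)$, and in the initial data, $h$ satisfies the ODE of $f$ with $h(0)=1$, $h'(0)=0$. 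Since the coefficient $(p-1)(h')^2+\ga^2 h^2$ cannot vanish on $[-\pi,\pi]$ (it would force $h=h'=0$, which contradicts $h=f$ at $0$ together with Picard--Lindel\"of continuation along the set $\{h=f\}$, which is then open, closed and nonempty in $\re$), uniqueness of the initial value problem gives $h\equiv f$. The limit being independent of the subsequence, the full sequence $g_N^{(k)}\to f^{(k)}$ uniformly on $\re$.

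The main obstacle is establishing the uniform $C^k$ bounds up to the zeros $x=\pm\pi/2$ of $g_N$, where the more natural first order object $\ps=\ph'/\ph$ appearing in \eqref{3.2} blows up. This is precisely what forces one to work with the second order ODE for $g_N$ directly, and is resolved by the uniform lower bound $c^{-1}\le |g_N|+|g_N'|$ of \eqref{3.36}$(\be)$, which in turn rested on the boundary Harnack comparison argument (Lemma \ref{lem2.4}) carried out at $e^{\pm i\pi/2}$.
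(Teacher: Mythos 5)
Your proof is correct, and it takes a genuinely different and more elementary route than the paper. The paper establishes Lemma~\ref{lem3.3} by analyzing the two-dimensional $p$-harmonic functions $\ti u_N(z)=G(1+iz/N)$: it factors $\ti u_N=H_N K_N$ with $H_N\to e^{-\ga y}$, extracts a locally uniformly convergent subsequence via the interior estimates of Lemmas~\ref{lem2.1}--\ref{lem2.3} and Ascoli, identifies the limit with $F=e^{-\ga y}f(x)$ by uniqueness of Wolff's separated solution, and then obtains convergence of higher derivatives by noting $\nabla F\neq 0$ forces $\nabla\ti u_N\neq 0$ for large $N$, so that Schauder theory applies. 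You instead stay at the one-dimensional ODE level: after clearing the factor $[\la^2\ph^2+(\ph')^2]^{(p-4)/2}$ and rescaling by $N^4$, $g_N$ satisfies $(\ga_N^2 g_N^2+(p-1)(g_N')^2)g_N''=-\ga_N g_N[\ga_N(2p-3)(g_N')^2+(p-1)\ga_N(\ga_N^2 g_N^2+(g_N')^2)+\text{(terms $O(1/N)$)}]$, whose leading coefficient is bounded below by \eqref{3.36}$(\be)$ and \eqref{3.35}; I checked that the formal limit of this equation agrees with the ODE for $f$ obtained by multiplying out \eqref{3.30} after the substitution $\si=f'/f$, namely $(\ga^2 f^2+(p-1)(f')^2)f''=-\ga^2 f[(2p-3)(f')^2+(p-1)\ga^2 f^2]$. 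The uniform $C^k$ bounds then follow by solving for $g_N''$ and inducting, and the conclusion is reached via Arzel\`a--Ascoli plus ODE uniqueness with the fixed Cauchy data $g_N(0)=1,\ g_N'(0)=0$. Your route avoids invoking any quasilinear Schauder theory for the inductive step and isolates exactly the inputs \eqref{3.35} and \eqref{3.36}; the paper's route is less hands-on but reuses the $p$-harmonic regularity machinery already in place and makes transparent that $\ti u_N\to F$ as genuine $p$-harmonic functions. One small tightening: the parenthetical justifying nonvanishing of $(p-1)(h')^2+\ga^2 h^2$ reads circularly as written; the clean statement is simply that $|g_N|+|g_N'|\geq c^{-1}$ from \eqref{3.36}$(\be)$ passes to the $C^1$ limit, giving $|h|+|h'|\geq c^{-1}$ on $\re$, and then Picard--Lindel\"of applies directly.
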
   
   
 \begin{proof}[Proof of Lemma \ref{lem3.3}]
 Given $ z = x +iy$,  $N $ a large positive integer,  and  $ G $ as defined  below   \eqref{3.35} 
 let  $ \ti u ( z )  = G  ( 1  +  i  z/N)$ for  $z  \in \rn{2}. $ From the definition of  $ \ph $  we see that if $ |z| < N, $   
\begin{align}  
\label{3.37}   
\ti u (z)  = \ti u_N ( x + i y ) =  \left[ (1- y/N)^2  + (x/N)^2 \right]^{\la/2} \,    
\ph \left( \arctan \left[  \frac{ x/N }{ 1 - y/N}  \right]  \right) . 
\end{align}  
Let 
\begin{align*}
 H ( z ) &:= H_N ( x + iy  ) =  \left[ (1- y/N)^2  + (x/N)^2 \right]^{\la/2},\\  
 K ( z ) &:= K_N  ( x + i y )  =     
\ph \left( \arctan \left[  \frac{ x/N }{ 1 - y/N}  \right]  \right)
\end{align*}
so   $  \ti u ( z )  = H ( z ) \, K ( z )$ when $ |z| < N. $    
Fix  $  R  > 100. $  Then from L' Hospital's rule, \eqref{3.37},  and   \eqref{3.35},  \eqref{3.36},     we find  uniformly   for  $ z \in B ( 0, 2R),$ that    
\begin{align}  
\label{3.38}  
 \lim_{N\to \infty}   H ( z  )  \,     =  e^{- \ga y}, \quad  \lim_{N\to \infty}   H_x  ( z  )  = 0,  \quad \mbox{and} \quad  \lim_{N \to \infty}  H_y ( z ) 
= - \ga  \, e^{-\ga y }.
\end{align} 
From \eqref{3.35},   \eqref{3.36}, and the same argument as  above  we see that   if  $ N'  $ is large enough  then 
  $  | \ti u_N |  $  is uniformly  bounded for  $ N \geq N',  $   so from   Lemmas  \ref{lem2.1} - \ref{lem2.3}, there exists  $ 1 < M < \infty $   with  
\begin{align}   
\label{3.39}  
\max_{B(0, 4R)} ( | \ti u_N | + |\nabla \ti u_N | )    \leq M <  \infty 
\end{align} 
for $ N \geq N'. $        From \eqref{3.39},   \eqref{2.4} $(\hat a) $,   and  Ascoli's theorem 
we see that a  subsequence say   $  (\ti u_{N_l}),  ( \nabla \ti u_{N_l} ), $  converges 
uniformly in   $ B (0, 2R ) $  to    $  u,  \nabla u,  $ and $ u $ is $p$-harmonic in  $ B ( 0, 2R).$     
Next we observe  that  $ (|H_N|)  $ is uniformly bounded below  in  $ B (0, 4R)  $  for $ N   \geq N'$ for $ N' $  large enough.   
Using   this  fact, \eqref{3.36},  and  \eqref{3.39} we see that        
\begin{align}  
\label{3.40} 
|K_x | (z)   =  N^{-1}     |\ph' |\left( \arctan \left[ \frac{ x/N }{ 1 - y/N}  \right]  \right)  
\frac{ ( 1 -  y/N) }{ ( 1 - y/N)^2  +  (x/N)^2 }   \leq M' <  \infty
\end{align}
 for $ N \geq N' ,  N   \in  \{ N_l \}. $      
Choosing $ y  = 0 $  in   \eqref{3.40}  and  using  \eqref{3.36},  properties of  $  \arctan$ function  we  deduce    
\begin{align}  
 \label{3.41}  
 |g'_N ( \hat x ) | =  N^{-1} | \ph' ( \hat x/N) |  \leq  2 M' \quad   \mbox{for}\, \,  \hat x  \in [-2R, 2R]. 
\end{align}   
From   \eqref{3.41}    and the  chain rule  it  follows easily that    
\begin{align}  
\label{3.42}   
\lim_{l \to  \infty}  (K_{N_l})_y  (z) = 0   \mbox{ uniformly  in } B ( 0, 2R) . 
\end{align}   
Thus  in  view of    \eqref{3.42},  \eqref{3.38},  we get     $   u  ( z ) = 
e^{- \ga y}  \nu (x)$ for $z \in  B  ( 0, 2 R),  $  so by uniqueness of     $ f $ in \eqref{3.29a} we have $ \nu \equiv f $ in 
$ B (0, 2R). $     Since  every subsequence of   $ ( \ti u_N ) $ converges uniformly to   $ F $  and $ R > 100, $   is  arbitrary  
we  conclude  Lemma  \ref{lem3.3}  when $ k  = 0. $

  Now from  \eqref{3.36} $(\be)$  and uniform convergence of  $ ( \nabla \ti u_N) $  to  $  \nabla F \not = 0 $ 
   on compact subsets of  $ \rn{2},$  we deduce  for $ N \geq N'$ that  $  \nabla  \ti u_N \neq  0 $  
    in $ B ( 0, R ). $     Then from   \eqref{2.4} $ (\hat c) $  we see  first  that  $  \ti u_N $  is infinitely 
    differentiable in  $ B (0, R ), $ for $ N  \geq N' $    and second   from  Schauder type arguments 
     using   \eqref{2.4} $ (\hat a), (\hat b), $  as in  \cite{GT},   that   
\begin{align}  
\label{3.43}    
D^{(l) }    \ti u   \to D^{(l)}  F  =   D^{(l)}   (  e^{-\ga y} f (x) ), \quad \mbox{for}\, \,  l = 0, 1,  \dots 
\end{align}
  uniformly on compact subsets of  $ \rn{2}$ where $ D^{(l)} $  denotes an arbitrary  $l$ th derivative 
  in either $ x $ or $y.$   To finish the proof of  Lemma \ref{lem3.3},   we  proceed by induction.    
  Suppose  by way of induction that  Lemma \ref{lem3.3}   is  valid for $ k  = l, $ a  non-negative integer.     
  Using the product formula for derivatives  and  \eqref{3.37} we find   that  taking   $ m $  partial derivatives in $x$ 
   on  $  H$  gives an expression that is  $ O ( N^{-m/2} ) $ when $ m $ is even  and  $ O ( N^{ - (m+1)/2 } )   $  
   when $ m $  is odd,   for  $  z \in  B ( 0, R )$     as  $  N  \to \infty. $   Also $ n \leq  l  $ derivatives on   $ K $   
    produces an  expression that is   
	$ O ( 1 ) $  in $ B (0, R) $  as  $ N   \to  \infty,   $  thanks to global  $ p$-harmonicity of  $ F. $  
	 Moreover in this $ O ( 1)  $  term  the  only way to  get  a  non-zero term in the limit as $ N \to  \infty $ 
	 is to put all derivatives on $ \ph,  $ which then gives from the induction 
	 hypothesis a   term  converging  to  $  f^{(n)} (x), $   as  $  N  \to   \infty.$         
	 From these  observations  and the product formula for derivatives we conclude   that    
\begin{align}   
\label{3.44}     
  \lim_{ N \to \infty}    \frac{ \ar^{l + 1} \ti u (z) }{  \ar x^{l +1}  }  =  \lim_{N \to \infty}  \left[ (1+y/N)^2  + (x/N)^2 \right]^{\la /2} \, g^{(l + 1 )}  (x)  =   e^{ - \ga y } f^{(l + 1) }(x). 
  \end{align}  
  From \eqref{3.44}, L' Hospital's rule, and induction    we see that Lemma \ref{lem3.3} is true.  
\end{proof}
    
In  order to use  Lemma  \ref{lem3.3}   we  briefly outline  Wolff's  proof  of  Theorem  \ref{thmA} for $  p > 2$ 
and also  the extension to $ 1 < p < 2 $ of this theorem  in  \cite{L1},    
tailored  to    $ 2\pi $  periodic rather than one periodic  $p$-harmonic functions on $  \rn{2}_+. $   
Let  $ F, f, \ga $ be as in  \eqref{3.29a}, $ 2 < p < \infty$,   and   for $ z  \in \rn{2}$ set   
\begin{align}  
\label{3.45}   
\begin{split}
A  (z) &= A ( x + i  y )\\ 
& =( (f')^2  +  \ga^2 f^2 )^{(p-4)/2}  e^{- \ga (p-2)y}   \left(   \bea{ll}       \ga^2  f^2 +  (p-1) (f')^2   &  - (p-2) \ga   f'  f \\
 - (p-2) \ga  f'  f  & (p-1)   \ga^2 f^2 +   (f')^2    \ea \right) (x).
 \end{split}
 \end{align}     
Note that $ A $ is  $ 2 \pi $ periodic in the $ x $   variable.  Moreover, if $  A (z) = ( a_{ij} (z) )$ for  
$z = x + i y \in \rn{2}_+ $ and 
$  \xi  = \xi_1 + i  \xi_2 \in \rn{2}, $  then   
\begin{align}  
\label{3.46}  
c^{-1} |\xi|^2 e^{- \ga  (p-2) y }   \leq   \sum_{i, j = 1}^2   a_{ij}  \xi_i \xi_j    \leq   c |\xi|^2   e^{ - \ga ( p - 2) y} 
\end{align} 
whenever $  \xi \in \rn{2}.$  Here \eqref{3.46}  follows from \eqref{3.29a}, Harnack's inequality for $F$, 
as well as  the analogue of   \eqref{3.36} $ (\be)$  for $ f.$    For the rest of  this section we regard
   $  \nabla  \psi  $ in rectangular coordinates,  as  a  $ 2 \times 1 $  column matrix whose top 
    entry is  $ \psi_x. $  Also,   $ \nabla \cdot $ is  a  $ 1 \times 2 $  row matrix whose first or 
    leftmost entry is  $  \frac{\ar}{\ar x}. $  Finally  if  $ \xi $ is a $ 2 \times 1 $  column matrix 
    and  $ \xi^t $  is the transpose of $ \xi, $ then  $  \lan A^* \nabla \psi,   \xi   \ran   =  \xi^t \, A^*  \nabla \psi $  
    whenever $ A^* $  is  a  $ 2 \times 2$ matrix with real entries.   
    
\begin{lemma} 
\label{lem3.4}
 Given  $ p$, $2 < p < \infty, $  there exists  $ \ze_i  =  \ze_i ( \cdot, p )   \in  C^{\infty} (  \bar {\mathbb R}_+^2 )$ for  $i  = 1, 2,  $   with  $  \nabla \cdot ( A \nabla \ze_i ) = 0 $ in  $  \rn{2}_+ $   satisfying  
\begin{align} 
\label{3.47}  
\begin{split}
&(\bar a) \hs{.2in} \ze_i  ( z + 2 \pi)  = \ze_i ( z ), \, \,  z \in  \rn{2}_+, \quad \mbox{and}\quad {\ds  \max_{\re} | \ze_i |}  = 1. \\
&(\bar b) \hs{.2in}   { \ds \int_{S(2\pi) }   \lan A \nabla \ze_i, \nabla \ze_i  \ran dx dy      \approx  \int_{S(2\pi)}  e^{- \ga ( p - 2) y}\,  | \nabla \ze_i  |^2 dx dy}  < \infty \, . \\
& (\bar c) \hs{.14in}  \mbox{There exist  $  \de = \de(p)  \in (0, 1]$ and $  \mu_i  \in \re$  with}\, \,       {\ds  \lim_{y \to \infty }    \ze_i ( x + i y )  = \mu_i \, \, \mbox{for} \, \, x\in\re}  \\   
&\hs{.4in}  \mbox{and}\, \,     | \ze_i  ( z ) - \mu_i   |  =  | \ze_i ( x + i y) - \mu_i   |  \leq   2  \, e^{- \de y}  \, \, \mbox{for}  \,  \, y \geq 0.  \\
&    (\bar d)   \hs{.2in}   {\ds   \max_{ \rn{2}_+ } } \,  |\nabla \ze_i| \leq M < \infty  \quad \mbox{and} \quad   {\ds \int_{S(2\pi)}   | \nabla \ze_i |^q \, dx dy}  \leq   M_q   < \infty   \mbox{ for  }  q \in ( 0,  \infty).  \\ 
&    (\bar e)  \hs{.2in}   \mbox{There exist  $ y_0  \in (0, 1), $  $  c_+$, and $c_{++}  \geq 1, $    with } \, c_+^{-1}  \leq  {\ds \int_{- \pi}^{\pi} }   {\ts\frac{\ar \ze_1}{\ar y}} (x + i y) \, dx  \leq  c_+  \mbox{ and }  \\ 
& \hs{.4in} c^{-1}_{++}  \leq  {\ds \int_{-\pi}^{\pi} } \lan  A   \nabla  \ze_2,   e_1  \ran     (x + i  y )  \,  dx   \leq c_{++}  \mbox{ for }  0 \leq y \leq y_0,    \mbox{  where $ e_1 = 
\left( \bea{l} 1 \\ 0 \ea \right) $.}  
\end{split}
\end{align}
 \end{lemma}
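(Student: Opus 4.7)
The plan is to produce $\zeta_1$ and $\zeta_2$ as weak solutions to $\nabla\cdot(A\nabla\zeta)=0$ in the $A$-weighted Sobolev space of $2\pi$-periodic (in $x$) functions on $\bar{\rn{2}}_+$, obtained by solving two Dirichlet problems with carefully chosen smooth periodic boundary data $h_1,h_2$ on $\re$. The strategy mirrors Wolff's construction of the linearized solution in \cite{W} (and Varpanen's adaptation in \cite{V}), with $(\bar c)$ playing the role of Lemma \ref{lem2.5}. Let $\mathcal H$ be the space of $2\pi$-periodic $\phi\in W^{1,2}_{\mathrm{loc}}(\bar{\rn{2}}_+)$ with $\int_{S(2\pi)}e^{-\gamma(p-2)y}|\nabla\phi|^2\,dxdy<\infty$, and let $\mathcal H_0\subset\mathcal H$ have vanishing trace on $\{y=0\}$. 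By the comparability \eqref{3.46} the bilinear form $B(\phi,\psi)=\int_{S(2\pi)}\lan A\nabla\phi,\nabla\psi\ran\,dxdy$ is bounded and coercive on $\mathcal H_0$, so Lax-Milgram produces a unique $\zeta_i\in\mathcal H$ with trace $h_i$ that weakly solves $\nabla\cdot(A\nabla\zeta_i)=0$. Properties $(\bar a)$ and $(\bar b)$ are immediate from this setup and the maximum principle applied to bounded data, so we can enforce ${\ds\max_{\re}}|\zeta_i|=1$ by rescaling.

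For smoothness and $(\bar d)$, I would observe that on any compact subset of $\bar{\rn{2}}_+$ the matrix $A$ is $C^\infty$ and uniformly elliptic, since $|\nabla F|=e^{-\gamma y}\sqrt{(f')^2+\gamma^2f^2}$ is bounded above and away from zero on compact strips by \eqref{3.36}$(\be)$ and the analogue of $(\be)$ for $f$. Schauder theory as in \cite{GT}, together with smoothness of $h_i$, then gives $\zeta_i\in C^\infty(\bar{\rn{2}}_+)$, the pointwise bound on $\nabla\zeta_i$ follows from De Giorgi-Nash-Moser estimates near the boundary combined with the decay established in $(\bar c)$, and the $L^q$ bound then comes from integrating the exponential decay. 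To obtain $(\bar c)$ I would integrate the PDE over $[-\pi,\pi]\times\{y\}$ and use $x$-periodicity to conclude that $y\mapsto\int_{-\pi}^{\pi}(A\nabla\zeta_i)_2\,dx$ is constant in $y$, and this constant vanishes since $A$ decays exponentially while $|\nabla\zeta_i|$ stays bounded. Setting $\bar\zeta_i(y)=\frac{1}{2\pi}\int_{-\pi}^{\pi}\zeta_i(x+iy)\,dx$, a Caccioppoli estimate on dyadic slabs plus the Poincar\'e inequality for zero-mean $2\pi$-periodic functions produces geometric decay of the weighted energy on half-strips $\{y\geq Y\}$, which standard Moser iteration upgrades to pointwise exponential decay of $\zeta_i-\bar\zeta_i(y)$; the Cauchy property of $y\mapsto\bar\zeta_i(y)$ follows from controlling $\bar\zeta_i'(y)$ by the same bound, yielding $\mu_i$ and the rate $\delta$.

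The main obstacle, and the real content of the lemma, is arranging $(\bar e)$ by a suitable choice of $h_1$ and $h_2$. Since both integrals depend $y$-continuously on $y$ by $(\bar d)$, it suffices to secure strict nonzero values at $y=0$ and then propagate the bound to $[0,y_0]$ for some small $y_0$ by continuity. For the first integral in $(\bar e)$, I would take $h_1$ with a nonzero first Fourier coefficient (e.g.\ $h_1(x)=\cos x$ up to rescaling): the weighted Dirichlet-to-Neumann map sends this mode to a trace whose $x$-mean value $\int_{-\pi}^{\pi}\partial_y\zeta_1(x)\,dx$ is nonzero, as can be verified either from the parity symmetries of $f$ coming from \eqref{3.29a} or by a Hopf-type boundary argument ruling out accidental cancellation. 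For the second integral, I would exploit symmetry of $A$: the translation-invariance of $F$ makes $F_x=\partial_x F$ a nontrivial periodic solution of $\nabla\cdot(A\nabla\cdot)=0$, and integration by parts identifies $\int_{-\pi}^{\pi}\lan A\nabla\zeta_2,e_1\ran\,dx\big|_{y=0}$ with an $L^2$-pairing of $h_2$ against the conormal trace of a test function built from $F$; choosing $h_2$ not orthogonal to this conormal trace forces the integral to be nonzero. If either direct choice turned out to be degenerate, the fallback is a Fredholm alternative argument: failure of non-degeneracy of the bottom-boundary functional on all smooth periodic $h$ would make the kernel of the Dirichlet-to-Neumann map infinite-dimensional, contradicting ellipticity of $A$ on compact strips.
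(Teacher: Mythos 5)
Your setup for $(\bar a)$--$(\bar d)$ is sound and matches the strategy behind the paper's citation to Wolff's Section 3: construct $\ze_i$ by a weighted variational/Lax--Milgram argument in the $2\pi$-periodic setting, get smoothness and uniform gradient bounds from Schauder estimates (noting that after factoring out $e^{-\gamma(p-2)y}$ the equation becomes uniformly elliptic with $x$-dependent coefficients, which is needed to avoid a degenerating ellipticity ratio as $y\to\infty$), and prove the exponential stabilization of $(\bar c)$ by Caccioppoli--Poincar\'e energy decay on slabs. Your observation that $y\mapsto\int_{-\pi}^{\pi}(A\nabla\ze_i)_2\,dx$ is a constant that must vanish is correct and useful.

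The genuine gap is at $(\bar e)$, which you rightly identify as the real content of the lemma but do not actually prove. For $\ze_1$, the claim that $h_1=\cos x$ forces $\int_{-\pi}^{\pi}\partial_y\ze_1\,dx\neq 0$ ``by parity or a Hopf-type argument'' does not stand: the reflection symmetry of $A$ (coming from $f$ even, $f'$ odd) together with even data gives $\ze_1(-x,y)=\ze_1(x,y)$, so $\partial_y\ze_1$ is even in $x$, which says nothing about the sign or nonvanishing of its mean; and Hopf's lemma controls the pointwise sign of $\partial_y\ze_1$ at points where $\ze_1$ attains a boundary extremum, not the sign of a weighted average of $\partial_y\ze_1$ over a full period. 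For $\ze_2$, the reciprocity pairing with $F_x$ that you sketch relates boundary traces to the \emph{conormal} flux $(A\nabla\ze_2)_2$, whereas $(\bar e)$ concerns the \emph{horizontal} flux $\lan A\nabla\ze_2,e_1\ran$; these are not interchangeable, and in fact your own observation shows the period-mean of $(A\nabla\ze_2)_2$ is identically zero. Finally, the Fredholm fallback is not correct as stated: if the bottom functional $L(h)=\int_{-\pi}^{\pi}\partial_y\ze[h]\,dx$ vanished for all $h$, that is a single linear constraint on the Dirichlet-to-Neumann map and does not make its kernel infinite-dimensional, so ellipticity alone does not rule it out. The nondegeneracy in $(\bar e)$ is exactly what Wolff in \cite{W} (for $\ze_1$) establishes via the separated-variable ODE structure of $A$ and an explicit Fredholm-alternative computation, and what Lewis in \cite{L1} establishes for $\ze_2$ via the conjugate (stream-function) formulation; neither reduces to the soft considerations you offer.
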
     
 
 \begin{proof}[Proof of Lemma \ref{lem3.4}]    
 The proof of  Lemma \ref{lem3.4} for $ \ze_1$  and essentially also for  
    $ (\bar a)-(\bar d)$ of  $ \ze_2, $  is given in section 3 of \cite{W}.   
     The proof  of  $(\bar e)$ in  Lemma  \ref{lem3.4} for  $ \ze_2  $   is in  \cite{L1}.    
 \end{proof} 

Next   for for  fixed $ p$, $ 2<p<\infty$ and $\la = \la ( N, p )$, let $    T^p ( S ( 2 \pi )) $  be equivalence classes of   
functions  $  h  $ on $ \rn{2}_+ $   with  $ h ( z + 2 \pi )  = h ( z)$ for $z \in \rn{2}_+$,   distributional partial derivatives $ \nabla h,  $  and norm, 
  \[ 
  \| h \|_{+,p}   =   \int_{S ( 2 \pi)}  e^{ -  ( \la - 1) (p-2) y/N} \, | \nabla h |^2  ( x + i y)  dx dy  <  \infty.  
  \]
Also let  $ T_0^p ( S ( 2 \pi ))  \subset  T^{p} ( S ( 2 \pi ) ) $ be functions in this space that can be  
approximated arbitrarily closely in the above norm by  $ C^\infty $  functions in $ T^p (S (2 \pi))$  
that vanish in an open neighbourhood of  $  \re. $  
For  $g = g_N  $   as in  \eqref{3.36}  and $ z  \in \rn{2}_+$ set

\begin{align}
 \label{3.48}
 \begin{split}   
 \breve A  (z) =& \breve A_N  ( x + i  y )  \\ =&
( (g')^2  +  (\la/N) ^2 g^2 )^{(p-4)/2}  e^{- (\la - 1 ) (p-2)y/N}   \quad        \\
&  \times\left(   \bea{ll}       (\la/N)^2  g^2 +  (p-1) (g')^2   & - (p-2) (\la/N)   g'  g \\ \\
 - (p-2) (\la/N)   g'  g  & (p-1)   (\la/N)^2 g^2 +   (g')^2    \ea \right) (x) .
\end{split}
\end{align}
From   \eqref{3.36}  we observe that  $ \breve A ( z + 2 \pi)  =  \breve A ( z )$ for  $z \in \rn{2}_+ $ and from \eqref{3.35},  
\eqref{3.36}, Lemma  \ref{lem3.3},   that  if $  \breve A (z)  =  ( \breve a_{ij} (z)), $ then    \eqref{3.46} holds with  $ a_{ij} $  
replaced by  $ \breve a_{ij} $   provided  $ N \geq N'  $ and $  N' $ is large enough.  
   Let   $  \breve \ze_i  = \breve \ze_i ( \cdot, N ) $  be  the weak solution to   $  \nabla \cdot  (\breve A   \nabla \breve \ze ) = 0  $ in $ \rn{2}_+ $ with  
  $ \breve \ze_i -  \ze_i  \in  T^p_0 ( S ( 2 \pi ) ). $   Existence and uniqueness  of  $ \breve \ze_1,  $     
for example,   follows from \eqref{3.46} for   $  \breve A $  and  a  slight modification of   the usual  
calculus of  variations minimization argument  often given  for  bounded domains.   To  indicate this modification,   let  
\[ 
I ( h) =   \int_{ S ( 2\pi) }  \sum_{i, j = 1}^2  \breve a_{ij} \, h_{x_i}   h_{x_j} dx dy   
\] 
where the functional $I(\cdot)$  is  evaluated at functions  in  
\[   
\mathcal{F} : =   \{h:\, \,  h  \in  T^p ( S( 2 \pi ) )  \mbox{ with } h -  \ze_1 \in  T_0^p ( S (2 \pi ) ) \}.   
\]   
For fixed   $ \rho  > > 2 \pi, $  one  can   choose  $   h_j  \in  \mathcal{F}$ for $j = 1, 2, \dots $  so that  
\begin{align} 
\label{3.45a} 
\begin{split}
\begin{cases}
\, \,  {\ds \lim_{j \to \infty}  I  ( h_j ) =  \inf \{ I ( h) : h \in  \mathcal{F} \}},    \\
\, \,   {\ds h_j |_{S ( 2 \pi) \cap B(0, \rho )}    \rightharpoonup \ti h  \quad  \mbox{weakly in}\, \,  L^2 ( S ( 2 \pi) \cap B(0, \rho ) ), }   \\  
\, \,  \mbox{Each component of  $ \nabla  h_j $  tends  weakly  to  a function in  $L^2 ( S ( 2 \pi)) .$ } 
\end{cases}
\end{split}
\end{align}
Integrating by  parts  and using the definition of  a  distributional derivative,  it follows from  
\eqref{3.45a} that   $  \nabla  \ti h $ exists in the  distributional sense and   $ \nabla h_j  \rightharpoonup  \nabla \ti h $  
weakly  on  $ S ( 2 \pi) \cap B(0, \rho ).  $   Moreover  using     
$ h_j - \ze_1  \in T_0^p ( S ( 2 \pi)) $   it  follows that   $ \ti h $  can be chosen independent of  $  \rho $.  
 Using  lower semicontinuity of the functional we conclude that   $  \ti h  \in \mathcal{F} $  
 and  $ I (\ti  h )  =  \min_{h \in \mathcal{F}} I ( h). $    
The  rest of the proof is unchanged from the usual one for  bounded domains.      

Using  \eqref{3.46} for   $ \breve A, $       elliptic  
regularity theory, and  Lemma  \ref{lem3.3},  we also  find for $ N  \geq N'$ and $i=1,2$ that
\begin{align} 
\label{3.49}  
\begin{split}           
&\breve \ze_i  =  \breve \ze_i ( \cdot, N )   \in  C^{\infty} (  \bar {\mathbb R}_+^2 )   \, \, \mbox{with} \, \,  \breve \ze_i ( z + 2 \pi ) =  \breve \ze_i ( z )\,\, \mbox{and}  \, \, {\ds  \max_{\rn{2}_+}}\,  | \breve \ze_i  | = 1 , \\ 
&\mbox{   and  $ \nabla \cdot ( \breve  A \nabla  \breve \ze_i ) = 0 $ in           $ \bar {\mathbb R}_+^2 $ .  }  
          \end{split}
          \end{align}
Furthermore,  arguing as in section 3  of  \cite{W}    we  deduce the  existence of   $  \breve  \de  \in (0, 1], $ 
depending only on $p, $  and   $  \breve \mu_i  \in [-1,1]$ for $i = 1, 2,  $   satisfying      
\begin{align} 
\label{3.50}  
\lim_{y \to \infty}   \breve \ze_i  ( x + i y)  =    \breve  \mu_i \, \, \mbox{for}\,   \, x  \in \re \quad  \mbox{and}\quad    
     |  \breve  \ze_i \, ( x + i y)  \, -  \, \breve \mu_i  |  \, \leq  e^{- \breve \de y}  \, \,  \mbox{for}  \,  \, y \geq 0.  
\end{align}  
From Lemma \ref{lem3.3} and \eqref{3.35}  we see that   $  D^{(l)}  A   \to   D^{(l)} \breve A$ for $l=0,1,  \dots, $ 
 uniformly on $  \bar {\mathbb R}_+^2 $ as $ N  \to \infty,  $  where 
$ D^{(l)}  A $ denotes an arbitrary $l$-th partial derivative of $ A.$  From this observation,   
\eqref{3.50},  \eqref{3.49},  elliptic regularity theory,  and  Ascoli's theorem it follows  that  
\begin{align}  
\label{3.51}  
\lim_{N \to \infty}  D^{(l)}  \breve  \ze_i ( \cdot, N )    \to     D^{(l)} \ze_i  \mbox{ uniformly in $ \bar {\mathbb R}_+^2 $ as $ N \to \infty, $   for 
$ l = 0, 1, \dots $ }  
\end{align}

In view of    \eqref{3.51} and  \eqref{3.47} $ (\bar d), (\bar e ), $ we see for $ N' $  large enough and $ N \geq N', $   that     
\begin{align}
\label{3.52}  
\begin{split}
&(\al)  \hs{.2in}     {\ds  \max_{\rn{2}_+}  | \nabla \breve \ze_i|  \leq \breve M < \infty \mbox { and }  \int_{S(2\pi)} }  | \nabla \breve  \ze_i |^q dx dy  \leq  \breve M_q   < \infty   \mbox{ for  }  q \in ( 0,  \infty).   \\ 
&(\be) \hs{.2in}   c_*^{-1}   \leq  {\ds  \int_{-\pi}^{\pi}  \,  {\ts\frac{\ar \breve \ze_1}{\ar y}} (x + i  y) \, dx  }  \leq c_*  \mbox{ for } 0 \leq y \leq y_0,   \\
&(\ga) \hs{.2in} c_{**}^{-1}  \leq   {\ds  \int_{-\pi}^{\pi}  \lan \breve A \nabla \breve \ze_2 ,  e_1 \ran  (x + i  y) \,  dx  } \leq  c_{**} \mbox{ for }  0 \leq y  \leq y_0  \ .
\end{split}
\end{align}  
 Constants  in  \eqref{3.52}  are independent  of $ N \geq N'$ and $ \breve M,  \breve M_q,  c_* , c_{**}, $ 
 depend only on  $ p,  $  as well as the corresponding constants for $ \ze_1, \ze_2, $   in   
 (3.47) $ (\bar d), (\bar e) $  of Lemma \ref{lem3.4}. 

  To continue the proof of Theorem \ref{thmA}  for $ 1 < p\neq 2 < \infty, $ given $ z  = r e^{i\he}  \in B ( 0, 1), $  $ N \geq N', $    
  we follow   \cite{V}   and let  $ w = w (z) =     N  \he  -  i  N \log r  \in  \rn{2}_+ . $ 
  Then  $ w  $  maps  $ \{  z = r e^{i\he} : 0 <  r  <   1, |\he| < \pi/N  \}   $   one-one  and  onto  $ S ( 2 \pi ). $  
     If $  z = x+ i y =  r e^{i\he},$   put    $   \ti A (z)  = \breve A (w (z) ),$ when $    z \in B (0, 1) \sem \{0\}, $ and $ \ti A (0) = 0.$  
We note  from \eqref{3.48} for    $ z  \in B ( 0, 1)$ that 
  \begin{align}
  \label{3.53} 
  \begin{split} 
   \ti  A  (z) &= \breve A_N  (  N \he - i N \log r ) \\  
 &= \tau ( re^{i\he} ) 
     \left(   \bea{ll}       
     \la^2  \ph^2 +  (p-1) (\ph')^2   & - (p-2) \la   \ph' \ph  \\ 
 - (p-2) \la   \ph'  \ph  & (p-1)   \la^2 \ph^2 +   (\ph')^2    
 \ea \right) 
 ( \he )  
\end{split}
\end{align}  
 where 
 \[   
 \tau ( r e^{i\he} )     \, =    \,   N^{(2-p)} r^{ (\la - 1 ) (p-2)} ( (\ph')^2  +  (\la) ^2 \ph^2 )^{(p-4)/2}  (\he) .   
 \] 
Here  $   \la =  \la(2\pi/N) $  and  $ \ph =  \ph (  \cdot,  N  )$  is the extension of  $  \ph = \ph ( \cdot, 2\pi/N) $
  in \eqref{3.1}  to  $   \re.$   Let  $ \ti \ze_i (z) = \breve \ze_i ( w (z)  )$ for  $z \in  B ( 0, 1 )  \sem \{0\} $ 
  and observe that $ \ti \ze_i $ is  $ 2 \pi/N$  periodic in the $  \he $  variable.  
  From  the chain rule,  \eqref{3.49} - \eqref{3.52} and   elliptic interior regularity estimates  
  for $ \breve \ze_i,  i = 1, 2, $ akin to  \eqref{2.4} $ ( \hat a) $ of   Lemma  \ref{2.3}      
  we  see for $ i = 1, 2, $   that 
\begin{align}
\label{3.54}    
\max_{\ar B ( 0, r )}  [ ( r/N)  \, |\nabla \ti \ze_i| +  | \ti \ze_i  - \breve \mu_i  | ]    \leq  \ti M  \,  r^{ N \de }  \quad \mbox{for}\, \,  0 < r \leq 1, 
\end{align} 
where $ \ti M $  is independent of  $ N $  for  $ N   \geq N' . $    
Put  $ \ti \ze_i (0) = \breve \mu_i. $ Then  \eqref{3.54},   \eqref{3.52} $(\al)$, and the chain rule imply that    
\begin{align} 
\label{3.55}    
 {\ds \int_{B (0, 1) } }  | \nabla \ti   \ze_i |^q r dr d \he   \leq    N^{q-1}  \ti M_q   < \infty  \quad  \mbox{for}\, \,  q \in ( 0,  \infty), 
 \end{align} 
 where  $ \ti M_q $ is independent of   $ \ti \ze_i $  for  $ N \geq N' . $   Also  from 
 \eqref{3.52} $(\be), (\ga),  \eqref{3.53}, $ we  deduce for $ N \geq N', $    that  
\begin{align}
\label{3.56} 
\begin{split} 
&(+) \hs{.32in} 
 (2 c_*)^{-1} N   \leq  {\ds  \int_{-\pi}^{\pi}  \,  {\ts\frac{\ar \ti \ze_1}{\ar r}} (r e^{i\he}) \, d \he }  \leq 2 c_*  N  
\quad \mbox{for}\, \,   1  - \frac{y_0}{2N}  \leq r   \leq 1,  \\ 
& (++)   \hs{.2in} (2 c_{**})^{-1} N \leq   {\ds  \int_{-\pi}^{\pi}   \lan  \ti A \nabla' \ti \ze_2, \, e_1\ran  (r e^{i\he} ) \,  d\he   } \leq  2 c_{**} N \quad \mbox{for}\, \, 1  - \frac{y_0}{2N}  \leq r   \leq 1,
\end{split}
\end{align}  
where 
\[  
\nabla'  \ti  \ze_2 ( r e^{i\he})  =   \left( \bea{l}\hs{.03in}  r^{-1}  \frac{\ar \ti \ze_2}{\ar \he} \\-  \frac{\ar \ti \ze_2}{\ar r}  \ea \right).
\]    
Next  we observe from  $  \nabla \cdot ( \breve A  \nabla \breve \ze_i ) = 0 $ for $ i = 1, 2, $  
and the  change of  variables formula   that     if $  \chi \in C_0^{\infty}  ( B ( 0, 1 ) \sem \{0\} )$  then    
\begin{align}  
\label{3.57} 
I   =  \int_{B(0, 1)}    \lan \ti  A  \nabla' \ti \ze_i,  \nabla' \chi \ran r dr d \he  = 0  
\end{align}
From   \eqref{3.54}  and the usual limiting arguments  we see  that   \eqref{3.57} still holds if $  \chi \in C_0^{\infty}  ( B (0, 1) ). $  
Finally, if  $  \bar v (z) =  \bar v (re^{i\he})    =      r^{\la}   \ph ( \he, N ),   $    and   
\begin{align} 
   \label{3.58}
  \begin{split}   
   \bar  A  (z) =      |\nabla \bar v |^{p-4}  \left(   \bea{ll}      (p-1)  \bar v_x^2  + \bar v_y^2     &  (p-2) \bar v_x  \bar v_y  \\
  (p-2)    \bar v_x  \bar v_y  & (p-1)   \bar v_y^2 +  \bar v_x^2    \ea \right) ( z)  
\end{split}
\end{align}
  when  $ z = r e^{i\he} \in B ( 0, 1)$. Then   \eqref{3.57} can be rewritten as   
\begin{align}  
\label{3.59}   
I   =    \int_{B ( 0, 1)}   \lan  \bar A  \nabla \ti \ze_i,  \nabla \chi \ran  dx  dy  = 0  
\end{align}   
so  $  \nabla \cdot ( \bar A \nabla \ti \ze_i  ) = 0 $ in  $ B ( 0, 1). $ Here \eqref{3.59} can be verified  
by using the chain rule to switch  \eqref{3.57} from polar to rectangular coordinates
 but also as in \cite{V}   by  noticing   that  if 
   $  a ( \cdot, \ep )  =  \bar v  +  \ep   \ti l$ for    $\ti l  \in  \{ \ti \ze_i ,   i = 1, 2 \},  $    
    then   
     \[  
     \frac{\ar}{ \ar \ep}  \left( \nabla \cdot ( |\nabla a|^{p-2} \nabla a ) \right)_{\ep = 0}  =  \nabla \cdot  ( \bar  A  \, \nabla \ti l  ) = 0.  
     \]   
     The left hand side of  this  equation  can be  evaluated independent  of  the coordinate system,   
     so  letting  $ \bar v_{\xi}$ and $\bar v_{\eta}$ denote directional derivatives of   $ \bar v $ at $ z,$  where 
    $ \xi  = i e^{i\he}$ and $\eta = - e^{i\he}$,   we obtain
    \[
    \bar v_{\xi} = r^{-1}  \bar v_{\he} \quad \mbox{and} \quad \bar v_{\eta} = - \bar v_r .
    \]
    Using this fact,  replacing $ \bar v_x$ and $\bar v_y$ in   \eqref{3.58} and \eqref{3.59}    
    by $ \bar v_{\xi}$ and $\bar v_{\eta}, $   and computing  $ \nabla \ti \ze_i$ and $\nabla \chi, $ in the  $ \xi$ and $\eta$ 
    coordinate system, we arrive  at  \eqref{3.57}. Moreover,   \eqref{3.56} $(++)$ can be rewritten as   
\begin{align} 
 \label{3.60}     
 \hs{.2in} (2c_{**})^{-1} \la^{p-2} N \leq   {\ds  \int_{-\pi}^{\pi}   \lan  \bar  A \, \nabla \ti \ze_2, \, e_{\he} \ran  ( r e^{i\he}  ) \,  d\he  } \leq  2 c_{**}  \la^{p-2} N   
 \end{align} 
 for $1 -  { \ts \frac{y_0}{2N}} \leq r \leq 1$ where  
 \[
  e_{\he}  =  \left(  \bea{l} - \sin \he \\ \hs{.05in} \cos \he \ea \right).  
\]

Armed with   \eqref{3.54}-\eqref{3.56} and   \eqref{3.59},   we can now essentially copy  
the proof of  Lemmas 3.16-3.19 in \cite{W} for $2<p<\infty$  and  the argument leading to    (12)-(13)  in \cite{L1} for $1<p<2$. 
Thus  the reader should have these papers  at hand.  Since  constants now depend on  $ N, $  
we briefly  indicate  the slight changes in lemmas and displays.  In the proof we let $ C \geq 1  $ be a constant, 
not necessarily the same at each occurrence,  which may depend on   other  quantities besides $p, $  
such as   $ c_*, c_{**}, $  but is independent of  $ N $  and $ \ep, $   for  $ N  \geq N'$,  $0 < \ep \leq  \ep'. $     
 Given $ p$, $1 < p\neq 2 < \infty, $ and $ \ep > 0$  small, for $i=1,2,$  let  $  k_i  =  k_i ( \cdot, N ) $  be the  $ p $-harmonic function in  $ B ( 0, 1 ) $ with  
$ k_i  = \bar v  + \ep \, \ti \ze_i$  on $ \ar B ( 0, 1)   $    in the  $ W^{1,p} $  Sobolev sense.     From Lemma  
\ref{lem2.2}   we see that  $ k_i  $  is H\"{o}lder continuous in $ \bar B (0, 1). $  Also from 
the boundary maximum principle for $ p $-harmonic functions we deduce for $z = r e^{i \he} \in \bar B ( 0, 1)$ that  
\[
k_j (r e^{i\he}) = k_j  ( r e^{ i (\he + 2 \pi/ N)}) \quad \mbox{for}\, \, j = 1, 2.
\]
We note that  $ f$, $v$, and $g $ in  Wolff's notation in \cite{W} corresponds to our  $ \bar v$,  $\ti \ze_i$, and $k_i$ respectively.  
 If   $ q \in W_0^{1, p} ( B (0, 1 ) )$ and $2<p<\infty$  then the  analogue of 
the display in Lemma 3.16 of  \cite{W}  in our notation relative to  $ B (0, 1) $  is      
\begin{align} 
\label{3.61}   
 \left| \int_{B(0, 1)}    \lan \nabla q, \nabla ( \bar v + \ep \ti \ze_i  ) \ran | \nabla ( \bar v + \ep \ti \ze_i)|^{p-2}   dx dy  \right|  \leq  C  \ep^{\si}\,  N^{ (p - 1)/p' } \, \| | \nabla q |\|_p  
 \end{align}
for $ N \geq N', $ where  $ p' = p/ (p-1)$,  $\si = \min ( 2, p - 1),  $ and  $ \| | \nabla  q | \|_p $ is the Lebesgue  $p$ norm of  $  | \nabla q | $ on $ B ( 0, 1). $  
 To get this estimate  we   use H\"{o}lder's inequality,  \eqref{3.54},  and our knowledge of  $  \bar v $  
  to estimate the  term in brackets in  display  (3.17) of  \cite{W}.

Lemma  3.18  of this paper  follows easily from Lemma 3.16  with 
 $ q =     \bar v  +  \ep  \, \ti \ze_i  -  k_i$ for  $i = 1, 2,  $  and now reads, 
\begin{align}   
\label{3.62}      
\| |  \nabla \bar v  + \ep \nabla \ti \ze_i | \|_p^p   \leq   \| | \nabla k_i | \|_p^p      +  C  \ep^{\si} \,  N^{(p-1)/p'} \,  \| | \nabla  k_i  -  \nabla \bar v - \ep  \nabla \ti \ze_i  | \|_p    
\end{align}    
where all norms are relative to $ B ( 0, 1). $   
 
The  new version of the  conclusion in Lemma 3.19 of  \cite{W}  is: {\em There exists $  \ep' \in ( 0, 1/2)$ and     $  C  \geq  1 $    such that},    
\begin{align}  
\label{3.63}  
\int_{ B (0, 1  ) \sem B ( 0,  1 - \frac{y_0}{2N} ) }  | \nabla ( \bar v   + \ep  \ti \ze_i  ) -  \nabla  k_i   | \, dx dy
\leq   C  \ep^{\ti \tau}  
\end{align}   
{\em for $ 0 < \ep \leq \ep'$   where  $  \ti \tau =  \si p'/2 > 1. $}   To get  this new conclusion  first  
replace  $  \ep^{\si}$ by    $  \ep^{ \si }  N^{(p-1)/p' } $  and $ S^\la $ by $ B (0, 1), $ in the last 
display on page 392 of \cite{W}, as follows from  the new version of  Lemma 3.18.  Second   argue as 
in Wolff to get  the top  display  on page 393  of his paper  with  $ \ep^{\si/(p-1)}$  replaced by
   $  \ep^{\si/(p-1)} N^{1-1/p} $.  Using this  display one gets the second display from the 
   top on page 393 with 
$ \ep^{ \si p'} $ replaced by  $  N^{(p-1)}   \ep^{ \si p'} $  and $f, v, g$ 
replaced by $  \bar v,  \ti \zeta_i, k_i, $  respectively. 
To  get the  next display  choose   $ 0 <  \ep' ,  $ in addition to the 
above requirements,    so that   
\begin{align} 
\label{3.64}   
| \nabla \bar v  +  \ep \nabla  \ti  \ze_i  |^{p-2}   \geq   \hat C^{-1}  N^{p-2}
\end{align}  
for  $  N  \geq N' ,  $   $  0 <  \ep  \leq \ep' , $  and $ 1 - \frac{y_0}{2N} \, \leq r \leq 1. $     
   This choice is  possible as we see from  \eqref{3.35}, \eqref{3.36} $(\be),$   and  \eqref{3.54}.    
   We can now  estimate the integral  in  \eqref{3.63},  using  Schwarz's    inequality and  \eqref{3.64} as in \cite{W}. 
 We  get   the  conclusion of   Lemma 3.19   in  Wolff's  paper \cite{W},   except the integral in this display is   now taken over  
 $   B ( 0, 1)  \sem B ( 0, 1 -  {\ts \frac{y_0}{2N}} ) . $  Now    \eqref{3.63},  \eqref{3.56} $(+)$,  and  the  fact  that   
$   \bar  v  $  has average 0  on  circles with  center at the origin,  are  easily  seen to  imply   as  in  \cite{W} that         
\begin{align} 
\label{3.65}  
\int_{- \pi }^{\pi}  k_1( (1-  {\ts \frac{y_0}{2N}}) e^{i\he} )  d \he - \int_{- \pi }^{\pi}  k_1( (1- {\ts \frac{ y_0}{4N})} e^{i\he} ) 
  d \he    \geq   C^{-1} \ep  
\end{align}  
provided   $ 0 <  \ep   \leq \ep'  $  and  $ \ep' $  is  small  enough. 
   From the triangle inequality we conclude that there is a 
   $ d  \in \{1 -  \frac{y_0}{2N},   1 -  \frac{y_0}{4N} \}, $  
   for which   if       $   \ti V ( z )  =   k_1 ( d z ) - k_1 (0 )$ for $z \in B (0, 1),  $  
 then either  $ V = \ti V $  or  $  V  =  - \ti V $  satisfies \eqref{1.3} $(c)$ in 
 Theorem \ref{thmA}.    Also  the usual  calculus of  variations argument giving  
 $ k_1  $  and the maximum principle for $ p $-harmonic functions,  as well as either
   \eqref{3.54} or  \eqref{3.55} and  \eqref{3.36} $(\be)$,   give  \eqref{1.3} $(a), (b)$ in 
   Theorem \ref{thmA} with $ c $  replaced by $ C$.  Finally  \eqref{1.3} $(d)$  of   
   Theorem \ref{thmA}  follows from these inequalities   and   Lemma \ref{lem2.3}.   
The proof  of  Theorem \ref{thmA}  is now complete for $2<p<\infty$.    

To avoid  confusion we prove  Theorem \ref{thmA}, for  $ 1 < p' < 2, $  rather than  
$ 1 < p < 2, $  where as usual $ p'  = p/ (p -1)  $  and  $ p >  2. $  To do this we  
first  replace  the  right-hand side  in  display (13) of  \cite{L1}  by   $ C \ep^{\ti\tau}  N^{p-1}, $  
as  we deduce  in view of   the   new  second  display   from the top on page 393 of  \cite{W}. 
Second we use (13)   and  Schwarz's  inequality in the second line of   display (12) in  \cite{L1}   
(with  $ Q $  replaced by $ B ( 0, 1 ) \sem B ( 0, 1 - \frac{y_0}{2N}),$   $ q = p$), 
and   either  \eqref{3.54} or \eqref{3.55}  to get      
\begin{align} 
\label{3.66} 
\begin{split}
 &{\ds \left| \int_{B ( 0, 1 ) \sem B ( 0, 1 - \frac{y_0}{2N})} r^{-2} \left[ |\nabla k_2 |^{(p-2)} (k_{2})_{\he}  - | \nabla \bar v + \ep \nabla \ti \ze_2 |^{(p-2)} ( \bar v +  \ep  \ti \ze_2 )_{\he}  \right] dx dy \right|^2 } \\
 & \hs{1.3in} \leq  C N^{2 (p-2) } \ep^{2 \ti \tau},  
 \end{split}
 \end{align} 
 where $ \ti \tau $ is as in    \eqref{3.63}.
     Taking  square roots  in   \eqref{3.66},  using  \eqref{3.60},  the fact that  
     $ |\nabla \bar v |^{p-2}  \bar v_{\he} $  has average  0 on circles with center 
     at the origin,    and    arguing as in  \cite{L1}  we get    
\begin{align} 
\label{3.67}   
\int_{ B (0, 1 -  \frac{y_0}{4 N} ) \sem B (0, 1 -  \frac{y_0}{2 N} )   } r^{-1}  | \nabla k_2 |^{p-2} ( r e^{i\he} )    (k_2)_{\he} ( r e^{i\he} )    dr  d  \he  \geq 
  C^{-1} N^{p-2}  \ep
\end{align}  
    for    $  N  \geq  N' $  and  $ 0 < \ep  \leq \ep' .  $     Let  $ k $  be the 
    $ p' $-harmonic function in  $  B ( 0,  1 )  $   with   $k(0)=0$ satisfying
\[ 
  k_r  = N^{2-p}      r^{-1} |\nabla( k_2)|^{p-2} (k_2)_{\he} \quad \mbox{and} \quad r^{-1} k_{\he}  =  -  N^{2 - p}  |\nabla   k_2|^{p-2}  (k_2)_{r}.   
  \]  
  Existence of $k$ follows from simple connectivity of $ B (0, 1) $ and the 
  usual existence theorem for exact differentials. 
 Then  \eqref{3.67}  implies   
\begin{align} 
\label{3.68}   
\int_{ - \pi }^{\pi}     k ( 1 - {\ts  \frac{y_0}{4 N}}  e^{i\he} )  d  \he  -    \int_{ - \pi }^{\pi}     k ( 1 - {\ts  \frac{y_0}{2 N}} e^{i\he} )  d  \he  \geq  
  C^{-1} \ep   
\end{align}  
  for   $ N \geq N' $  and $  0 < \ep \leq \ep'.$  Finally  \eqref{3.68}  and  a similar  
  argument  to the one   from  \eqref{3.65}  on in the first  case considered,  
  give  Theorem  \ref{thmA}  for $ 1 < p'  < 2. $      This completes the proof of Theorem \ref{thmA} for $1<p\neq 2<\infty$.

\section{Proof of Theorem \ref{thmB}} 
\label{sec4}  
\setcounter{equation}{0} 
 \setcounter{theorem}{0} 
In this section we first state   Wolff's  main lemma  for applications (Lemma 1.6 in  \cite{W}),    
in the  unit  disk  setting  and  then  use it  to  prove  Theorem  \ref{thmB}.   
The  proof  of Theorem \ref{thmB}  is   essentially unchanged from  Wolff's proof of  Theorem  \ref{thm1.1}. 
However for the readers  convenience  we outline his proof, indicating how  
to  resolve a   few   problems  in converting this proof from  a half space to  $ B (0, 1).$  
  We  also  note  that  if    $  V $ as  in Theorem \ref{thmA}  is $ 2\pi/ N  $  
  periodic in the $ \he $  variable,  where $ N  = k N_0, $  $ k = 1, 2, \dots, $    
  then  $ V $  is  $ 2 \pi/k $ periodic in this variable.  Also since  $ N_0 $   
  depends  only on $p$  in the wider context  discussed  below the statement of  
  Theorem \ref{thmA} in  section  \ref{sec1},      we may as well   assume   $ N_0 = 1. $     
  Finally in the proof of  Theorem \ref{thmB},  we let   $ c \geq 1,  $    
  denote  a  positive constant  depending only on $ p$  in this  wider context.

  \subsection{Main Lemma for applications  of   Theorem \ref{thmA}}        
Given  $ h  \in  W^{1,p} ( B ( 0, 1) ), $ let $  \hat h $  be  the  
$p$-harmonic  function  in  $ B ( 0, 1 ) $  with  boundary values  $ \hat h = h $  on 
$  \ar B ( 0, 1 ) $   in the 
$ W^{1,p} ( B (0, 1 ) )  $  Sobolev sense.  We  also let  $  \|  h  \breve \| $   
denote the Lipschitz norm  of     $ h $ restricted to  $ \ar  B ( 0, 1 ) $  
and  $ \|  h \|_{\infty}  =  {\ds \max_{ \ar B ( 0, 1)} | h |}. $  
Next we state an analogue   Lemma 1.6 in \cite{W}. 

\begin{lemma} 
\label{lem4.1}  
Let $ 1 < p < \infty. $ Define $ \al = 1 - 2/p$  if  $ p \geq 2, $ and $ \al = 1 - p/2, $ if $ p < 2.$  
Let $ \ep > 0$ and  $0<M < \infty.$  Then there are $ A = A ( p, \ep, M ) > 0  $
and $ \nu_0 = \nu_0 ( \ep, p, M) < \infty, $  such that if  $  \nu  >   \nu_0 \geq 1  $ is  an integer, 
$ f$, $g$, and $q $  are  periodic  on  $ \ar B (0, 1 )$  in the $ \he $ variable with periods,   
$2\pi$, $2\pi$, and $2\pi \nu^{-1}$, respectively and if      
\begin{align}
 \label{4.1}   
 \max ( \| f \|_{\infty}, \| g \|_{\infty}, \| q \|_{\infty},  \| f \breve \|, \| g \breve \|,  \nu^{-1} \| q \breve \| )   \leq M, 
\end{align} 
   then for $ z = r e^{i \he} \in B ( 0, 1), $  
\begin{align}  
\label{4.2}   
|  \widehat {q f +  g} ( r e^{i \he} ) -  f ( e^{i\he})  \hat q (r e^{i \he} ) - g ( e^{i \he} ) |   <  \ep  \quad \mbox{for}\, \,   1 -  r  < A \nu^{ - \al}.   
\end{align}
   If, in addition,  $  \hat q (0)  = 0, $  then  
\begin{align}
\label{4.3} 
|  \widehat{ q f + g } \, ( ( 1 - A \nu^{- \al} )  e^{i\he} )  -   g ( e^{i\he}) | < \ep    
\end{align}    
and   
\begin{align} 
\label{4.4} |  \widehat{ q f + g } ( r e^{i\he} )  -   \hat g (r e^{i\he}) | < \ep    \quad \mbox{if} \, \, r  <   1 -  A  \nu ^{-\al}.
\end{align}      
\end{lemma}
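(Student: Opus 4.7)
The plan is to mimic Wolff's half-space proof (Lemma 1.6 of \cite{W}), using Lemma \ref{lem2.6} in place of Lemma \ref{lem2.5} to pin down the localization of $\hat q$ to a boundary layer, and then running a variational comparison argument to identify $\widehat{qf+g}$ with the explicit trial function $f(e^{i\theta})\,\hat q(re^{i\theta}) + g(e^{i\theta})$ near $\partial B(0,1)$.

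First I would apply Lemma \ref{lem2.6} to $\hat q$ with $\eta = 2\pi/\nu$. Because $q$ is $2\pi/\nu$-periodic in $\theta$ with $\|q\|_\infty \leq M$, we obtain
\[
  |\hat q(re^{i\theta}) - \hat q(0)| \leq c\,M\, r^{\nu/c},
\]
so on any circle $\{r=1-A\nu^{-\alpha}\}$ with $\alpha<1$ this difference is at most $cM\exp(-A\nu^{1-\alpha}/c)$, exponentially small in $\nu$. Thus $\hat q$ is genuinely concentrated in a boundary layer of width $\ll \nu^{-\alpha}$, and in particular, in the case $\hat q(0)=0$, $\hat q$ is already negligible on the circle $\{r=1-A\nu^{-\alpha}\}$. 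With this localization in hand, \eqref{4.4} would then follow from \eqref{4.3} plus a standard modulus-of-continuity estimate for $p$-harmonic functions (Lemma \ref{lem2.2} applied to both $\widehat{qf+g}$ and $\hat g$, whose boundary-circle values on $\{r=1-A\nu^{-\alpha}\}$ are within $\varepsilon$ of each other after choosing $A$ large).

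The heart of the lemma is \eqref{4.2}, and this is where I would reproduce the Wolff perturbation scheme adapted to the disk. Set the trial function $w(z) = f(e^{i\theta})\,\hat q(re^{i\theta}) + g(e^{i\theta})$ on $B(0,1)$. The idea is to test the weak formulation \eqref{2.1} for $|\nabla w|^{p-2}\nabla w$ against $\theta = \widehat{qf+g} - w \in W^{1,p}_0(B(0,1))$ (after truncation, since $f,g$ are Lipschitz on $\partial B(0,1)$ and extend by their radial lift, this is the correct boundary matching). Because $\nabla\hat q$ dominates $\nabla f,\nabla g$ only in the thin layer where $\hat q$ is nontrivial, the error $\mathcal L_p w$, when paired against a test function, produces a bound of the form $C\,\nu^{-\sigma}\,\|\nabla\theta\|_p$, with $\sigma$ determined by the interplay between the $\nu$-scale of $\nabla\hat q$ and the $O(1)$-scale of $\nabla f, \nabla g$. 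Monotonicity of the $p$-Laplacian vector field then converts this into an $L^p$ bound on $\nabla\theta$, and Morrey/Hölder embedding gives the pointwise conclusion \eqref{4.2}. The same argument, with the trivial trial $w = g(e^{i\theta})$, yields \eqref{4.3} once we know $\hat q$ is tiny on $\{r=1-A\nu^{-\alpha}\}$.

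The main obstacle is tracking the correct exponent $\alpha$. The value $\alpha=1-2/p$ (for $p\geq 2$) and $\alpha=1-p/2$ (for $p<2$) arises from the standard dichotomy in $p$-Laplacian perturbation inequalities: for $p\geq 2$ one has $\langle |a|^{p-2}a - |b|^{p-2}b, a-b\rangle \gtrsim |a-b|^p$, while for $p<2$ one has a reversed inequality $|a-b|^2(|a|+|b|)^{p-2}$ on the left. Balancing this with the size of $|\nabla w|^{p-2}$ (which in the boundary layer is of order $\nu^{p-2}$ for $p\geq 2$) produces exactly the asymmetric exponent in the statement. The bookkeeping is delicate but formally identical to Wolff's \cite{W} and Lewis's \cite{L1}; the only disk-specific modification is the use of Lemma \ref{lem2.6} for the boundary-layer localization, together with replacing the half-space cutoffs by radial cutoffs adapted to $\{1-r\sim A\nu^{-\alpha}\}$.
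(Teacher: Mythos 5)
Your outline correctly identifies the role of Lemma~\ref{lem2.6} in pinning $\hat q$ to a boundary layer, and your treatment of \eqref{4.3} and \eqref{4.4} (decay of $\hat q$ plus Lemma~\ref{lem2.2} continuity estimates) matches the paper. However, for the central estimate \eqref{4.2}, you are running a different argument than the paper does, and yours has genuine gaps.

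\textbf{Wrong key tool.} You invoke ``the Wolff perturbation scheme,'' i.e.\ the monotonicity/variational argument from Lemmas 3.16--3.19 of \cite{W}. That machinery is what the present paper uses to prove Theorem~\ref{thmA}, not Lemma~\ref{lem4.1}. The proof of Lemma~\ref{lem4.1} instead rests on Wolff's Lemma~1.4, a one-sided Caccioppoli-type estimate, stated here as \eqref{4.5}: for two $p$-harmonic functions $u,v$ with $u\leq v$ on a boundary arc $\{|\theta-\theta_0|\leq 2\eta\}$ and $0<t\leq 1/2$,
\begin{align*}
\int_{1-t}^1\int_{\theta_0-\eta}^{\theta_0+\eta}\bigl|\nabla(u-v)^+\bigr|\,r\,dr\,d\theta
\leq c\,\eta^{-1}t^{1/p'}\bigl(\||\nabla u|\|_p+\||\nabla v|\|_p\bigr)^{\alpha}\Bigl[\max_{\partial B(0,1)}(u-v)^+\Bigr]^{1-\alpha},
\end{align*}
with the same $\alpha$ as in the lemma. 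Your proposal never surfaces this inequality, yet this is where the exponent $\alpha$ actually comes from in the paper.

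\textbf{Wrong comparison function.} You take the trial function $w(re^{i\theta})=f(e^{i\theta})\hat q(re^{i\theta})+g(e^{i\theta})$ globally. That $w$ is not $p$-harmonic (since $f,g$ vary in $\theta$), so you must estimate $\mathcal L_p w$, which gets messy precisely where the action is. The paper avoids this entirely: it argues by contradiction, fixes a point $\theta_0$ where $|J|(te^{i\theta_0})>\epsilon$, and compares $\widehat{qf+g}$ with $v(re^{i\theta})=\hat q(re^{i\theta})f(e^{i\theta_0})+g(e^{i\theta_0})$ --- which \emph{is} $p$-harmonic because $f(e^{i\theta_0})$ and $g(e^{i\theta_0})$ are constants (and the $p$-Laplacian is homogeneous and translation-invariant). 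Both $u$ and $v$ being $p$-harmonic is exactly the hypothesis needed for \eqref{4.5}. The resulting quantitative lower bound on a set $W$ of positive measure, fed into \eqref{4.5}, gives $t\gtrsim \nu^{-\alpha}$ and hence \eqref{4.2} for $\beta\nu^{-1}<1-r<A\nu^{-\alpha}$; the remaining range $1-r<\beta\nu^{-1}$ is handled by the boundary continuity estimate \eqref{2.3a}.

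\textbf{The Morrey step fails for $p<2$.} You close the argument by ``Morrey/H\"older embedding'' from an $L^p$ gradient bound to a pointwise one. In $\mathbb R^2$, $W^{1,p}$ does not embed in $C^0$ when $1<p<2$, so this step is invalid on exactly half the stated range. The paper's contradiction argument through \eqref{4.5} does not need Sobolev embedding and covers all $1<p<\infty$. Even for $p>2$, your argument gives a bound on $\nabla\varphi$ over all of $B(0,1)$, whereas the statement \eqref{4.2} is a pointwise estimate only in the thin layer $1-r<A\nu^{-\alpha}$; the identification $\widehat{qf+g}\approx w$ is simply false in the interior, so some localization (which the paper builds in via the set $W$ on a short arc and local oscillation estimates) is required that your proposal omits.
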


\begin{proof}   Lemma \ref{lem4.1}  is  just  a  restatement  for  $ B (0, 1), $  of  Lemma 1.6
   in   \cite{W}.  To briefly outline the proof  of   Lemma \ref{lem4.1},  we note that   Lemma 1.4 in \cite{W}
     is used to prove    Lemma   1.6  in  \cite{W}. This lemma relative to  $ B (0, 1) $  
     states for fixed $ p$, $1 < p < \infty, $  that if   $ u$ and $v $ are  $ p $-harmonic in $ B (0, 1),$  
     bounded,   $ u, v   \in  W^{1, p}  ( B(0, 1) ), $    and  if    $ u \leq    v $ on  
     $  \{ e^{i\he} :  | \he  - \he_0 |  \leq  2 \eta \}$ for $0  < \eta < 1/4$   
     in the $ W^{1,p} ( B ( 0, 1) )$ Sobolev sense,   then for   $ 0 < t \leq 1/2, $     
\begin{align} 
\label{4.5}   
\int_{1-t}^1 \int_{\he_0 - \eta}^{\he_0 + \eta}  | \nabla ( u  - v  )^{+}   | \,  r dr d \he   \leq   c \eta^{-1}  t^{1/p'} ( \| | \nabla  u |  \|_p  +  \| | \nabla v | \|_p )^{\al } \, \,   [  \max_{\ar B ( 0, 1)}  ( u  - v )^{+} ]^{1- \al}
\end{align}
    where $  a^+  =  \max (a, 0) $. It follows  from   a Caccioppoli  type  inequality for   $  ( u - v)^{+}$ that \eqref{4.5} holds.

To   begin the    proof  of  Lemma \ref{lem4.1},   if $ z = r e^{i\he} \in B ( 0, 1), $  let   
\[      
J ( r e^{i\he} )  =    \widehat {q f +  g} ( r e^{i \he} )  -   \hat q (r e^{i \he} )  f ( e^{i\he})   -   g ( e^{i \he} ) .      
\]  
The first step in the proof of  Lemma \ref{lem4.1} is to show for given $  \be  \in 
(0, 10^{-5})  $ that   there is  a  $   A  =  A ( p, \ep,  M,  \be  ) $  for which  
\eqref{4.4} holds (so $ |J| (t e^{i\he})  < \ep$)    when   $  \be \nu^{-1} < 
     1 - t  <   A  \nu^{-\al},   $  for  $  \nu   \geq  \nu_0 = \nu_0 ( p, \ep,  M,  \be). $   
     Indeed if   $  J ( t  e^{i \he_0 } )   >   \ep, $   then \eqref{4.1},  
     Lemmas \ref{lem2.1},   \ref{lem2.2},  and invariance of  $p$-harmonic 
     functions under a  rotation,    are used   in \cite{W} to show that if    
     $  \eta =  \frac{\ep}{ 10^5 (M^2 + M)}, $  then  there is  a  set  
     $ W \subset  \{ t  e^{i\he} : | \he - \he_0 |      \leq \eta  \}   $  of   
     Lebesgue measure  $ \de \geq    \rho \be \eta/100,  $  where $ \rho = \rho ( p, M, \ep) , $   with  
\begin{align} 
\label{4.6}   \widehat {q f +  g} ( t e^{i \he} )  -    \hat q (t e^{i \he} )  f ( e^{i\he_0})  -   g ( e^{i \he_0} )
     >   \ep/2 . 
\end{align}     
         Also  \eqref{4.1}  and the choice of  $ \eta $  yield  
\begin{align}  
\label{4.7}   
| \widehat {q f +  g} (  e^{i \he} )  -    \hat q ( e^{i \he} )  f ( e^{i\he_0})  -   g ( e^{i \he_0} )| < \ep/200   \quad \mbox{when}\, \,  | \he  - \he_0 |  <  \eta.
\end{align}
Using    \eqref{4.7}, our  knowledge of $ W,  $  and     \eqref{4.5}   it  follows that if  
$ u (r e^{i\he})  =  \widehat{ q f + g} (r e^{i\he})  $  and  
$ v ( r e^{i\he} )   = q (r e^{i\he})  f ( e^{i\he_0} ) + g ( e^{ i \he_0} ), $  then    
\begin{align} 
 \label{4.8}   
\begin{split} 
 \de  \ep/4   &\leq  {\ds \int_0^t \int_{\he_0 - \eta}^{\he_0 + \eta}  | \nabla ( u  - v  )^{+}   | \,  r dr d \he }   \\
 & \leq   c (M, \ep )  t^{1/p'} ( \| | \nabla  u |  \|_p  +  \| | \nabla v | \|_p )^{\al} \, \,  \\ 
 &   \leq    c'(M, \ep ) t^{1/p'}  \nu^{  \al/p'}.
 \end{split}
 \end{align}
     The  estimate on   $ \|  | \nabla u | \|_p$ and $\| |\nabla v| \|_p,  $  in the second
        line  of \eqref{4.8}   follows from  \eqref{4.1} and  the minimization property
         of  $p$-harmonic functions using, for example,
\[   
\psi  ( r e^{i \he}  )   = u( e^{i\he})  \chi ( r ) \quad \mbox{where} \quad  \chi \in  C_0^{\infty} ( 1 - 2/\nu,  1 + 2/\nu)
\]
 with  $ \psi = 1$  on  $( 1 - 1/\nu, 1 + 1/\nu ) $  and  $ |\nabla \psi | \leq c \nu. $    Now \eqref{4.8} 
 yields after some arithmetic that  $ t >  \ti A ( \ep, M, \be ) \nu^{ -  \al   }. $   Thus   \eqref{4.2} of Lemma  
\ref{lem4.1}  is true  when 
$  \be \nu^{-1}  <  1 - r   <  A  \nu^{- \al } , $  subject to fixing $ \be = \be ( \ep, M). $   
To do this  we apply   \eqref{2.3a} of    
Lemma  \ref{lem2.2}    with   $ \hat v = \widehat{qf+g}$,  $ q,  $  and 
with $ \rho =  \be^{1/2} \nu^{-1}$, $\si = 1$,  $M'  =  \nu$,   to get   
for $ 1 - r  <  \be \nu^{-1},$       
\begin{align} 
\label{4.9}  
  | J ( r e^{i \he} )    -  J ( e^{i\he} ) |  \leq    c ( M ) \left(  \nu \,  ( \be^{1/2} \nu^{-1}    )    +     ( \frac{ \be \nu^{-1}  }{\be^{1/2}  \nu^{-1}}   )^{\si_1}\right)   \leq  c' ( M ) \,   \be^{\si_1/2} .  
  \end{align} 
 Choosing $ \be = \be ( \ep, M ) > 0 $ small enough and then fixing  $ \be $ we obtain   \eqref{4.2} for     
$ 1 - r  <  \be \nu^{-1}.$         

To prove  \eqref{4.3} we note from \eqref{2.8} of  Lemma  \ref{lem2.6} that   
\begin{align} 
\label{4.10}   
| q ( r e^{i\he}) -  q (0)  |  \leq  c M    r^{\nu/c}
\end{align}    
where  $ c = c(p).$   Using  \eqref{4.10} with  $ q(0) = 0$,   $r  =  1 - A \nu^{-\al}, $ and  
choosing $ \nu_0, $  still larger if necessary  we get  \eqref{4.3}. Now  \eqref{4.4} 
follows from  \eqref{4.3} and  \eqref{2.3a} of Lemma \ref{lem2.2}  with  $ \hat v = \hat g$ and  $\rho =
 A \nu^{- \al/2}   $  in the same way as  in the  proof of    \eqref{4.9} for $ \nu_0 $ large enough.    This finishes the sketch of proof of Lemma \ref{lem4.1}. 
\end{proof}

\subsection{Lemmas   on  Gap Series}  
The  examples  in  Theorem  $ \ref{thmB} $ will be  constructed  using   Theorem    \ref{thmA} 
as   the  uniform limit on compact subsets of  $  B (0, 1)  $ of a sequence of  $p$-harmonic  functions
  in  $  B (0, 1), $     whose boundary values are  partial sums     of       $  \Ph_j $ in  Theorem \ref{thmB}
   with  periods  $ 2\pi/ N_j $   where  $ N_{j+1}/N_j  > > 1$.   Lemma \ref{lem4.1}  will be used to make estimates on this sequence.   
   Throughout this subsection we let $ |E|  $  denote the Lebesgue measure of  a  measurable set  $  E \subset \re. $    We  begin with  
\begin{lemma} 
\label{lem4.2} 
For $j = 1, 2,  \dots,$  let  $  \psi_j$  be   Lipschitz functions defined  on  $  \ar B  ( 0, 1) $  with    
\begin{align} 
\label{4.11}    
\int_{-\pi}^{\pi}  \psi_j ( e^{i\he} ) d \he  =  0  \quad \mbox{and}\quad      \| \psi_j   \|_{\infty}  +  \| \psi_j  \breve \|     \leq    C_1 < \infty.
\end{align}
For $j = 1, 2, \dots$, let  $  (N_j)_1^{\infty}  $ be  a  sequence of  positive integers   with  
$ N_{j+1}/N_j  \geq  2$.  Also let  $ (a_j)_1^{\infty} $ be a sequence of real numbers with 
$ {\ds \sum_{j=1}^\infty  a_j^2  <  \infty} . $  

 If        
  \[ 
  s^* (  e^{i\he} ) : =  \sup\limits_{k}   \left|   \sum_{ j = 1}^k   a_j  \psi_j ( e^{ i N_j \he} )  \right|
  \]
 then        
\begin{align} 
\label{4.12}  
\int_{- \pi}^{\pi}  ( s^*)^2 ( e^{i\he} )  d \he  \leq  c \, C_1^2  \sum_{j =1}^\infty a_j^2 
\end{align}
  where $ c $  is  an  absolute constant. Consequently, 
\begin{align}
\label{4.13}  
\begin{split}
&(a) \hs{.2in} s ( e^{i\he} ) : =  {\ds \lim_{k\to \infty}  \sum_{j=1}^k }  a_j   \psi_j ( e^{ i N_j \he} ) \quad \mbox{exists for  almost every}\, \,   \he  \in  [- \pi, \pi], \\
&(b) \hs{.2in}             | \{  \he \in [- \pi , \pi] :   s^* ( e^{i\he} ) > \la \} |   \leq     \frac{ c \, C_1^2}{\la^2}. 
\end{split}
\end{align}
\end{lemma}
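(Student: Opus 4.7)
The plan is to obtain the maximal $L^2$ bound \eqref{4.12} via Fourier analysis combined with the gap condition $N_{j+1}/N_j \geq 2$, and then derive \eqref{4.13} as a routine corollary. The Lipschitz hypothesis on $\psi_j$ will yield strong Fourier-coefficient decay, and the gap condition will force near-orthogonality of the rescaled pieces $\psi_j(e^{iN_j\theta})$.

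First I would expand $\psi_j(e^{i\theta}) = \sum_{n\neq 0} c_{j,n} e^{in\theta}$; the mean-zero hypothesis kills the $n=0$ coefficient, and integration by parts together with $\|\psi_j\breve\| \leq C_1$ yields $|c_{j,n}| \leq cC_1/|n|$. Hence $\psi_j(e^{iN_j\theta})$ has Fourier spectrum in $N_j\mathbb{Z}\setminus\{0\}$. For $i<j$, Parseval expresses $\langle \psi_i(e^{iN_i\cdot}),\psi_j(e^{iN_j\cdot})\rangle_{L^2}$ as a sum over pairs $(n,m)$ with $nm\neq 0$ and $nN_i = mN_j$; writing $d=\gcd(N_i,N_j)$, $N_i=d\alpha$, $N_j=d\beta$ with $\gcd(\alpha,\beta)=1$, these pairs are exactly $(k\beta,k\alpha)$, $k\neq 0$, and the coefficient bound gives $|\langle\cdot,\cdot\rangle| \leq cC_1^2 d^2/(N_iN_j) \leq cC_1^2 N_i/N_j \leq cC_1^2\cdot 2^{-(j-i)}$. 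Summing the cross terms by AM--GM then gives $\|S_k\|_2^2 \leq cC_1^2 \sum_{j\leq k} a_j^2$ uniformly in $k$, so the formal sum $T := \sum_j a_j\psi_j(e^{iN_j\cdot})$ is a bona fide $L^2$ function with $\|T\|_2^2 \leq cC_1^2 \sum a_j^2$.

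The main step is upgrading this $L^2$ bound on partial sums to the maximal estimate \eqref{4.12}, and this is where the argument is delicate. The crucial identity is $\hat S_k(m) = \hat T(m)$ for $|m| < N_{k+1}$: any $j$ contributing to $\hat T(m)$ requires $N_j \mid m$ and $n\neq 0$, forcing $N_j \leq |m| < N_{k+1}$ and hence $j \leq k$. Thus $S_k = T_{N_{k+1}} + R_k$, where $T_M(\theta) := \sum_{|m|<M} \hat T(m) e^{im\theta}$ is the Dirichlet partial sum of $T$ at level $M$ and $R_k$ collects the Fourier modes of $S_k$ at $|m| \geq N_{k+1}$. Using the uniform Dirichlet-kernel estimate $\|\psi_j - \psi_j^{<M}\|_\infty \leq cC_1 \log M/M$ for Lipschitz functions, together with $M_j := N_{k+1}/N_j \geq 2^{k+1-j}$ and Cauchy--Schwarz in $j$, one checks that $\|R_k\|_\infty \leq cC_1 (\sum a_j^2)^{1/2}$ uniformly in $k$. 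For the principal term I would invoke the Carleson--Hunt theorem on $L^2$-maximal Fourier partial sums --- or, more economically, a lacunary maximal estimate for Dirichlet partial sums, derivable from Littlewood--Paley theory and the gap $N_{k+1}/N_k \geq 2$ --- to obtain $\|\sup_k|T_{N_{k+1}}|\|_2 \leq c\|T\|_2 \leq cC_1 (\sum a_j^2)^{1/2}$. Combining these two bounds gives \eqref{4.12}. The log-free maximal inequality is the main obstacle, and it uses the gap structure in an essential way, not merely the almost-orthogonality.

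Given \eqref{4.12}, part (b) of \eqref{4.13} is immediate by Chebyshev. For (a), the uniform $L^2$ bound on partial sums makes $(S_k)$ Cauchy in $L^2$, converging to some $s \in L^2$; applying \eqref{4.12} to the tail series $\sum_{j>N} a_j \psi_j(e^{iN_j\cdot})$ gives $\|\sup_{k>N}|S_k - S_N|\|_2 \to 0$ as $N\to\infty$, and a standard Borel--Cantelli argument then yields $S_k \to s$ a.e.\ on $[-\pi,\pi]$.
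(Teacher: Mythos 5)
Your proof is correct, but it takes a genuinely different route from the paper's, and it is worth comparing the two.

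The paper works mode by mode in the Fourier variable of $\psi_j$: after expanding $\psi_j(e^{i\theta}) = \sum_{n\neq 0} b_{jn}e^{in\theta}$, it bounds $s^*$ by $\sum_n l_n^*$ where $l_n^*$ is the maximal partial sum of the single-mode series $\sum_j a_j b_{jn}e^{inN_j\theta}$. For each fixed $n\neq 0$ the frequencies $nN_j$ are lacunary in $j$ with ratio $\geq 2$, so the classical maximal inequality for lacunary trigonometric series (Zygmund) gives $\|l_n^*\|_2^2 \leq c'\sum_j(a_jb_{jn})^2$. The proof then sums over $n$ via Minkowski and Cauchy--Schwarz with the weight $1/n$, exploiting $\sum_n n^2 b_{jn}^2 \leq cC_1^2$ (the Lipschitz hypothesis). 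This argument never needs to realize the formal sum $T$ as an $L^2$ function, never introduces Dirichlet partial sums, and never needs a high-frequency remainder; its only analytic input is the maximal estimate for a lacunary series with given coefficients.

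Your approach instead keeps each composite $\psi_j(e^{iN_j\theta})$ whole, first establishes $T\in L^2$ via almost-orthogonality (the $\gcd$ computation is fine), then writes $S_k = T_{N_{k+1}} + R_k$. The identification $\hat S_k(m) = \hat T(m)$ for $|m|<N_{k+1}$ is correct, as is the $\|R_k\|_\infty$ bound via the Lebesgue rate $\|f-S_Mf\|_\infty = O(C_1\log M/M)$ for Lipschitz $f$ together with Cauchy--Schwarz over $j$ (the numbers $M_j = N_{k+1}/N_j$ grow geometrically, so $\sum_{m\geq 1}(m2^{-m})^2$ controls the weight). The cost is the invocation of the lacunary Carleson maximal theorem $\|\sup_k|S_{N_{k+1}}f|\|_2\leq c\|f\|_2$ for arbitrary $f\in L^2$ --- a result that is classical and not deep (Littlewood--Paley suffices; full Carleson--Hunt is overkill, as you note), but it is a statement about general $L^2$ functions along a lacunary sequence of Dirichlet partial sums, whereas the paper only needs the weaker and more elementary statement about a single lacunary trigonometric series. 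In short: both proofs use the same Lipschitz coefficient decay and the same gap structure, but the paper localizes the lacunary maximal estimate to one Fourier mode at a time and thereby avoids both the $L^2$ realization of $T$ and the remainder $R_k$. Your derivation of \eqref{4.13} from \eqref{4.12} via Chebyshev and a tail Borel--Cantelli argument is standard and matches the paper's unstated ``standard arguments.''
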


\begin{proof}   
Using  elementary properties of  Fourier series  
(see  \cite{Z}) and  $ \| |\frac{d\psi_j}{d\he} | \|_{\infty}     \leq  C_1 $  we  find  that    
\begin{align} 
\label{4.14}      
\psi_j ( e^{i\he} )  =  \sum_{ n = - \infty}^{\infty}  b_{j n } e^{ i  n  \he}  \mbox{ where }  b_{j 0} = 0  \quad \mbox{and}\quad   \sum_{ n = - \infty}^\infty   n^2  b_{j n}^2  \leq  c\,  C_1^2.
\end{align}
Now     
\begin{align}  
\label{4.15}  
s^* (e^{i\he} )  \leq    \sum_{n=-\infty}^{ \infty}  \sup\limits_k  \left|   
\sum_{j=1}^k  a_j  b_{j n} e^{i n N_j \he }  \right| =    \sum_{n=-\infty}^{ \infty} l_n^* ( e^{i\he}) 
\end{align}  
where $ l_n^* $ is the maximal function of   $  {\ds \sum_{j=1}^\infty   a_j  b_{j n} e^{i n N_j \he }.}$    It is well known  (see \cite{Z}) that  
\begin{align}
\label{4.16}  \int_{-\pi}^{\pi} (l_n^*)^2  (e^{i\he}) d \he   \leq \, c' \,  \sum_{j=1}^{\infty} (a_j b_{jn})^2.
\end{align}     
Using  \eqref{4.15}, \eqref{4.16}, and Cauchy's inequality we  get   
\begin{align}
\label{4.17}   
\begin{split}
 \int_{-\pi}^{\pi}  (s^*)^2 ( e^{i\he} ) d \he  &\leq      \left(  \sum_{n=-\infty}^\infty ( \int_{-\pi}^{\pi}  (l_n^*)^2 ( e^{i\he} ) d \he)^{1/2}  \right)^2
\leq     {\ds   c' \left( \sum_{n=-\infty}^\infty   ( \sum_{j=1}^\infty  a_j^2  b_{j n}^2 )^{1/2}  \right)^2  } \\ 
&\leq  {\ds  2 c' \,  ( \sum_{n=1}^\infty n^{-2}    ) \sum_{j=1}^\infty  \sum_{n=-\infty}^{\infty} } (a_j \, n  b_{j n} )^2 \leq  c \, C_1^2  {\ds  \sum_{j=0}^\infty  a_j^2 }.
\end{split}
\end{align}
Therefore, \eqref{4.12} is valid. Now  \eqref{4.13} follows from   standard  arguments, using  \eqref{4.12} (see  \cite{Z}).       

To prove Theorem  \ref{thmB}, let  $ N_j  $  be a  sequence of positive integers 
with  $ N_{j+1}/N_j $  a  positive integer $ > 2.$    Let  $  \Ph_j $  be  
the  $p$-harmonic function in  Theorem \ref{thmA} with period  $  2 \pi/ N_j $   and set 
$   \ti \Ph_j   =  \frac{\Ph_j}{\| \Ph_j\|_{\infty}}.$   Also for $  \he \in \re  $    and   
$ j = 1, 2, \dots $, we set 
\begin{align} 
\label{4.17a}    
\begin{split}
&\ph_j ( e^{i\he} ) = \ti \Ph_j ( e^{i\he/N_j}),  \\
& d_j    =    \frac{1}{2\pi}  \int_{-\pi}^{\pi}  \ti  \Ph_j ( e^{i\he} )  d \he, \\
&   \psi_j  =  \ph_j -  d_j.
\end{split}
\end{align}
Note from Theorem \ref{thmA}  that   $ c^{-1}  \leq d_j  \leq  1$  and that $\psi_j$   
satisfies  \eqref{4.11} of    Lemma    \ref{lem4.2} for $j = 1, 2,  \dots,  $.  For $j = 1, 2,  \dots,  $ set  
\[
G _j  :=  \{  [ \pi k/ N_j,  \pi (k + 2)/N_j ], \, \,  \mbox{$k$ an integer} \}    
\]
and let $   \{ L_j ( e^{i\he} ) \} $  be continuous functions on $  \ar B (0, 1 ) $ satisfying          
\begin{align}
\label{4.18} 
\begin{split}
\begin{cases}
L_1 \equiv  1, \, \, 0 < L_ {j+1} \leq L_j, \, \, \mbox{and} \\
L_{j+1}/L_j\, \,  \mbox{considered as  a function of  $  \he $ on  $ \re  $  is linear on the  intervals in $ G_j$}.
\end{cases}
\end{split}
\end{align}
Let $  \si ( e^{i\he}  )$ for  $\he \in \re,  $  be the formal series defined by
\begin{align} 
\label{4.19}   
\si (e^{i\he} ) :=  R +  \sum_{j=1}^\infty  a_j \,  L_j ( e^{i\he} ) \,  \ti \Ph_j  ( e^{ i \he} ) 
\quad \mbox{and}\quad   \ti s ( e^{i\he} ) :=  \sum_{j = 1}^\infty  a_j  \ti \Ph_j  ( e^{i\he})
\end{align}
with
\begin{align}
 \label{4.20} 
  0  \leq  |R|  \leq 1  \quad \mbox{and}  \quad  \sum_{j=1}^\infty a_j^2  <  1  \, .  
\end{align} 
Finally let  
$ \ti s_n$ and $\si_n$  denote corresponding  $n$-th partial sums  of  $  \ti s$ and $\si$ respectively.   

Given  $ I   \in G_j $, let $ \ti  I $  denote the interval with the same center as $ I $  
and three times its length.   Using  the gap assumption on $(N_j), $  \eqref{4.18}, and induction  we find  that 
\begin{align}
  \label{4.21} \| L_{j+1} \breve \|   \leq  c  N_j.
 \end{align}
Using the gap assumption on $ (N_j ), $  Theorem  \ref{thmA},   and  \eqref{4.21},  \eqref{4.20}, we deduce for  $ n = 1, 2, \dots, $  that 
 \begin{align} 
 \label{4.22}  \be_n  =  N_n^{-1}  ( \| s_n \breve \| +  \| \si_n \breve \| ) \leq  c 
 \quad \mbox{and} \quad  \lim_{n \to \infty} \be_n =  0  \mbox{ as }  n \to \infty. 
\end{align} 
   Moreover, from \eqref{4.13}$(b)$ of    Lemma \ref{lem4.2} and  \eqref{4.17a}  we have   
\begin{align} 
\label{4.23}   
|  \{ \he  \in [- \pi, \pi ] :    \sup\limits_n  |  \ti s_n ( e^{i \he } )   -   \sum_{j = 1}^n  d_j  a_j |   > \la  \}  |  \leq  c  \,  \la^{ - 2 }  \sum_{j=1}^{\infty} a_j^2  .  
\end{align}
    First let $ R = 0 $  and choose  $ (a_n) $  satisfying 
 \eqref{4.20},   so   that   
$  \sum_{ j = 1}^{\infty}  d_j a_j  $  is  a  divergent series  whose partial sums are  bounded.   Then  from \eqref{4.23} we deduce that 
\begin{align} 
\label{4.24}  
\sup\limits_n   | \ti s_n ( e^{i\he} ) |  <  \infty  \quad \mbox{and} \quad     \ti s ( e^{i\he} )  \mbox{ does not exist  for almost every }  \he  \in [ - \pi, \pi ]. 
\end{align}
  Using   \eqref{4.18}-\eqref{4.24},    Wolff (see \cite[Lemma 2.12]{W}) essentially proves 
  \begin{lemma}  
  \label{lem4.3}   
If   $ N_{j+1}  >  N_j  ( \log ( 2 + N_j ) )^3 $ for $j  = 1, 2, \dots,  $ then there is  a  choice of 
$ ( L_j ) $ satisfying     \eqref{4.18}   such that   $   \sup\limits_{j}  \|  \si_j \|_{\infty} < \infty  $  and  $  \si $  
diverges for almost every  $  \he  \in [-\pi, \pi]. $   
\end{lemma}

\begin{proof}   
To  outline the proof of  this lemma, for $n  = 1, 2, \dots,$ let   $ \Upsilon_n$ denote all intervals $ I \subset  \re $  that are maximal
 (in length) with the property   that  $ I \in   G_j $  for some $j$ and   $ {\ds  \max_{I} } \,  |s_j|  > n. $     
From   \eqref{4.21},   \eqref{4.22},  and   \eqref{4.20}   we  see that  
if   $ I  \in  \Upsilon_n \cap G_j , $ and  $ \ti c $  is  large enough 
(depending only on $p$),   then  $ |s_j| > n  -  \ti c  $ on $ \ti I $ 
where $ \ti c $ depends only on $p.$    Using \eqref{4.23}   with  $  \la = n   - \ti c $  
and boundedness of the partial sums of   $  \sum_{j=1}^{n} a_j d_j $      
we get   $ c  \geq 1 $  depending only on  $ p, $  and the  choice of  
$ (a_j) $    such that   
\begin{align}
\label{4.25}    
\sum_{ \ti I  \in \Upsilon_n}    |  \ti I   \cap [- 3 \pi, 3 \pi]  |    \leq  c \, n^{-2} \quad  \mbox{for} \,\,  n = 1, 2, \dots.
\end{align}
Thus,  from  the usual measure theory argument,  
\begin{align}  
\label{4.26}  
\left|  \{ \he  \in  [-\pi, \pi]  :    \mbox{ for  infinitely many $ n$,   $\he \in   \ti I$  with $  I \in \Upsilon_n$\}   }   \right|  =  0.
\end{align} 

 Finally, for  $j = 2, \dots,$   define $ L_j$  by induction as follows 
\begin{align} 
\label{4.27}  
\begin{split}
&(a) \hs{.1in} \mbox{ If  $ L_k $ has  been defined  and  $ I  \in   G_{k} $ is also in  $ \cup  \Upsilon_n, $ put $  L_{k+1} = \frac{1}{2} L_k  $ on   $ I. $}  \\  
&(b) \hs{.1in}   \mbox{    If   none   of  the three intervals in  $ G_{k}  $  contained  in  $ \ti I  $   are  in   $  \cup  \Upsilon_n,  $}   \\
& \hs{.47in}  \mbox{ set    $  L_{k+1} =  L_k  $ on   $ I. $ }  \\  
&(c) \hs{.1in} \mbox{ If  neither  $ (a) $  nor  $(b)$  holds for $ I \in G_{k} $  use   \eqref{4.18} to define $ L_{k+1}. $ }
\end{split}
\end{align}
From    \eqref{4.26}  and the definition of  $ L_j $ we see for almost  every $ \he \in [ - \pi, \pi] $  
that  there exists a positive integer  $ m  = m ( \he) $  such that   
 $ L_j  ( \he)  = L_m ( \he )  $  for  $  j \geq m. $   From \eqref{4.24} 
 we conclude that   $ ( \si_k )  $ diverges for almost every $ \he  \in [ - \pi,  \pi]. $  
 Also  if    $   | \ti s_k (e^{i\he} ) |   >  n ,   $     then    since  $  |a_k | \| \ti \Ph_k \|_{\infty}  \leq  1, $ 
 we see from \eqref{4.27} that  there exist   $ n   $ distinct integers,
   $ j_1  < j_2 <  \dots  j_n  \leq k $  with $   L_{j_i + 1} (e^{i\he} ) = \frac{1}{2}  L_{j_i} ( e^{i\he} ) .   $   Thus  $ 
L_{k+1} ( e^{i\he} )  \leq 2^{ - n}. $  Using this fact and summing by parts Wolff gets,  
 $  \sup\limits_k  \| \si_k \|_{\infty}  <  \infty.  $   
\end{proof}  

Next we state 

\begin{lemma} 
\label{lem4.4}  
If   $ N_{j+1}  >  N_j  ( \log ( 2 + N_j ) )^3 $ for $j  = 1, 2, \dots,  $ then there is  a  choice of 
$ ( L_j ) $ satisfying     \eqref{4.18}   such that   $     \si_j    > 0$ for $j = 1, 2, \dots   $  and  $ \sup\limits_j \| \si_j \|_{\infty} < \infty$    on  $ \re. $   Also,   
\[
\si ( e^{i\he} ) ={ \ds \lim_{j \to \infty}  \si_j }  ( e^{i\he} ) = 0   \quad  \mbox{for almost every}\, \,  \he  \in [-\pi, \pi]. 
\] 
 \end{lemma} 
 
\begin{proof}  
Lemma \ref{lem4.4}     is   essentially   Lemma
 2.13   in \cite{W}.  To   outline  his  proof  let  
\begin{align} 
\label{4.28}   
R = 1   \mbox{ and }   a_j     =   -  \frac{1}{4j } \,  \mbox{   for $ \,   j =1,2, \dots  $  in     \eqref{4.20}. }   
\end{align}
We also set
\[
 \Upsilon_{kn}    :=  \{  I  \in G_k  :    \max_I   \ti s_k   > n   
\mbox{   and  $  I  \not  \subset J \in  \Upsilon_{jn}$   for any $ j < k$ } \}.
\]   
Define   $ \mathcal{F}_{kn}$ and $\mathcal{H}_{kn},$ by induction   as follows : Let 
 $  \si_1 =   1 +   a_1 \ti \Ph_1 $  be the first partial  sum of  $ \si  $  in  \eqref{4.19}.      
 By induction,  suppose    $ L_j $   
   and corresponding  $  \si_j $   have  been defined for  $ j  \leq  k. $  
   Assume also that  
   $ \mathcal{F}_{jn}, \mathcal{H}_{jn}  \subset  G_j $ have been 
   defined  for $ j < k $  and all positive integers $n$ 
  with $ \mathcal{F}_{0n} = \es  =   \mathcal{H}_{0n}.$      If  $ n $ is  a 
  positive integer and  $ I  \in G_k,  $  we  put  $ I  \in \mathcal{F}_{kn} $  
  if  $  \min_{I} \si_k  <  2^{-n} $  and  this interval is not in $  \mathcal{F}_{jn} $ 
  for some $  j < k.$    Moreover we put  $ I  \in  \mathcal{H}_{kn} $ 
  if     $  \min\limits_{I} \ti s_k  <  - \frac{ 2^{n}}{n+1}$  and 
  $ \max\limits_{I} L_k > 2^{-n}. $     Then   
\begin{align} 
\label{4.29}   
\begin{split}
&(a)  \hs{.2in}  L_{k+1} =  \frac{1}{2} L_k   \mbox{ on    $   I  \in G_k $  if  $ I  \in  \cup_n  ( \mathcal{F}_{kn}  \cup  \mathcal{H}_{kn}  \cup    \Upsilon_{kn}  )   $ }  \\   
&(b)  \hs{.2in}  L_{k+1} = L_k   \mbox{  on  $ I $  if  none of  the three intervals in  $ \ti I $ are  in     $ \cup_n  ( \mathcal{F}_{kn}  \cup  \mathcal{H}_{kn}  \cup    \Upsilon_{kn} ). $ } \\
& (c) \hs{.2in}  \mbox{If neither $ (a)$ nor $(b)$  hold for $ I \in G_k, $ use  \eqref{4.18} to define $  L_{k+1}. $ }  
\end{split}
\end{align}
This definition together with \eqref{4.19} define  $ L_{k + 1}$ on  $  G_k $   so by induction we get  $ (L_m )$, $(\si_m), $  and also  
  $  (\mathcal{F}_{mn})$ ,  $(\mathcal{H}_{mn})$, $(\Upsilon_{mn}) $ whenever  $  m, n  $ are positive integers.

  As in Lemma \ref{lem4.3} we have $ L_{k+1}  < 2^{-n} $ on   $ I  \in  \Upsilon_{kn}$.  Also if  
  $ 2^{-(n+1) }   \leq \min_I \si_j  <  2^{-n} $  and  $ L_{j+1} \leq 2^{-n} $  on $ I \in G_j, $ then from  \eqref{4.28}  we see that  
  \[  
  \si_{ j + 1}  \geq  \si_j   -  2^{-(n+2) }  \geq 2^{-(n+2)} \quad \mbox{on}\, \,  I.
   \]    
   Using   this   observation and induction  on  $ n  $ one can show  for all  positive integer $k$ and $n$ that    
\begin{align}  
   \label{4.30}  
   \mbox{  if  $ I  \in G_k $   and $ \min\limits_I \,  \si_k < 2^{ - n}$  then $ L_{k+1} < 2^{-n} $ on $ I \in G_k$}.  
\end{align}   
 Now    \eqref{4.30}   implies that 
\begin{align}  
\label{4.31}      
\si_k   > 0 \, \, \mbox{for}\, \, k = 1, 2, \dots,  \mbox{ on } [- \pi, \pi ]
\end{align}
 since  $ \si_1 > 0 $  and  if    $   2^{- ( n + 1) }  \leq   \si_{k} <   2^{-n} $  
 on $ I  \in G_k$.  Using this observation and \eqref{4.30} again we  have     
 \[
 \si_{ k + 1}  >   \si_k   -  2^{ - ( n + 2)} > 0   \quad  \mbox{on}\, \, I.
 \]    
Thus   to show that  $ (\si_j) $ is bounded it suffices to show that  $  \max\limits_{k}\,  \si_k <  c  < \infty. $  
Using this fact and  repeating the argument for boundedness of $ ( \si_j ) $ in   
Lemma \ref{lem4.3}  we obtain boundedness of    $  (\si_j) $.  It remains to prove that 
\begin{align} 
\label{4.32}  
\begin{split}
\ti s_k ( e^{i\he} )  \to 0 \quad  \mbox{for almost every}\, \,  \he \in [-\pi, \pi]. 
\end{split}
\end{align}
We shall need
\begin{align}
\label{4.32a} 
c^{-1}   \, L_{k+1} (e^{i\he_2} ) \leq  L_{k+1} ( e^{i\he_1} )  \leq  c  \, L_{k+1} (e^{i\he_2} ) 
\end{align}
whenever $\he_1,\he_2  \in \ti I$ and $I  \in G_k$  for $k = 1, 2 \dots$. 
This  follows easily from \eqref{4.29} and the gap assumption on  $ (N_j) .$ 
        To prove  \eqref{4.32}   let  $ E_n $  denote the  set of  all  $  \he \in \re $  
        for which there exist  $ k$ and $l  $  positive integers with 
        $ k  < l $  satisfying
\[
\ti s_l  >  -  \frac{ 2^n}{2(n+1)} \quad      \mbox{while} \quad  \ti s_k < -  \frac{ 2^n}{n+1}. 
\]
        From $ a_j  < 0 $ and   $ c^{-1}  \leq d_j \leq 1$ for $j = 1, 2, \dots$,  we obtain  that   
 \begin{align}  
 \label{4.33}   
 \max \left[  | \ti s_l ( e^{i\he})   -  \sum_{j=1}^l a_j d_j |\, , \, |  \ti s_k ( e^{i\he} ) -  \sum_{j=1}^k a_j d_j |\right ]     \geq   \frac{ 2^n}{ 8 (n+1)}
\end{align} 
 for $n\geq 100$. If we let 
  \[    
   \La  :=   \{ \he \in \re : \he \in E_n \mbox{ for infinitely many $ n $}  \} \cup 
  \{ \he \in \re  :  \limsup_{j\to \infty}  \ti s_j (e^{i\he})  > -  \infty \}
  \]
  then using   \eqref{4.33} and  \eqref{4.23} we deduce 
\begin{align} 
\label{4.34}     
|  \La  |  = 0.
\end{align}
Next from induction on  $ m  $   and   the definition of  $ \mathcal{H}_{km}, $  it follows    that
    if   $  \ti s_k ( e^{i\he} )  <  -  \frac{2^m}{m+1}$ on $ I \in G_k$   
    then   $ L_{k+1 } ( e^{i\he} )  \leq 2^{-m}. $  Therefore if  $\he_0  \not \in \La$  
    then 
\[    
    \lim_{k \to \infty} \ti s_k \,  (e^{i\he_0} ) = - \infty  \quad \mbox{and} \quad \lim_{k \to \infty} (\ti s_k \,  L_{k +1} ) (e^{i\he_0} ) = 0. 
\]    
 
     These    equalities  and  
\begin{align} 
\label{4.34a}   
0  \leq  \si_j   ( e^{i \he_0})   = 1 +  \sum_{l \leq  j }  ( L_l  - L_{l+1} ) \ti  s_l  ( \he_0 )   +   s_{j} L_{j+1} ( \he_0)
\end{align}
 imply  that if    $ \he_0  \not \in \La $  then  it must be true that $ \lim_{j \to \infty } \si_j (e^{i\he_0} )$ exists and is non-negative.   

 Suppose this limit is positive.  Then from \eqref{4.22}  we find that  
\begin{align}  
\label{4.35}  
\sup\limits_{j} \{  \max_{\ti I }  \ti s_j  : \he_0 \in I  \in G_j \}   < \infty \quad \mbox{and} \quad
   \inf\limits_{j} \{   \min_{\ti I} \si_j  : \he_0 \in I \in G_j \}   >  0 . 
\end{align}   
      So   $  \he_0 $ belongs to at most a finite number of  $ \ti I $  with   
$ I \in \Upsilon_{kn}   \cup \mathcal{F}_{k, n}  $  for  $ k, n = 1, 2, \dots $. Then since $ L_k ( e^{i\he_0} )  \to   
   0$ and $\ti s_k ( e^{i\he_0} )  \to - \infty  $ as $ k \to \infty $  
   we deduce that given  $ m  $  a  sufficiently large positive integer, 
   say  $ m \geq m_0, $  there exists  $ m'  < m $  with   
\[
    m' = \max\{ j : j< m \mbox{ and }  L_j (\he_0)\neq   L_{m} ( \he_0 ) \} 
\] 
such that  $ \he_0 \in \ti I$,    $I  \in \mathcal{H}_{m'n},   $ 
for some  positive integer $ n $.  This  inequality  and   \eqref{4.32a}  yield  
that  if   $ L_{k} ( \he_0 ) <  2^{-l}, $ then  $ \ti s_k ( \he_0 ) 
<  - c  \, \frac{ 2^{l}}{ l + 1}  $ for $  l \geq l_0 $  where $ c \geq 1  $ is 
independent of  $k$ and $l$.    Using this fact and choosing an increasing  
sequence $ (i_l)   $ for $ l \geq l_0 $  so that 
$ L_{i_l} ( \he_0)  = 2^{ - l }$ for $l \geq l_0,   $  it  follows from \eqref{4.34a} that 
$ \si ( e^{i\he_0} ) = - \infty $  which contradicts  $  \eqref{4.31}$.
This first shows that $\si ( e^{i\he_0} )=0$ and this completes the proof of Lemma \ref{lem4.4}. 
\end{proof}   
\subsection{Construction of Examples} 
        To  finish the  proof  of  Theorem \ref{thmB}  we again follow   Wolff in \cite{W} closely  and use  
        Lemmas  \ref{lem4.1}, \ref{lem4.3}, and  \ref{lem4.4} to construct examples.    
        Let  $ N_1 = 1$ and  by  induction suppose  $ N_2,  \dots, N_k $ have been chosen,    
        as in  Lemmas  \ref{lem4.3} and \ref{lem4.4}, with  $ \si $  as in  \eqref{4.18}-\eqref{4.20}.     
        Let $ g = \si_k$, $f = a_{k+1} L_{k+1}$,    $ q   = \Ph_{k+1} ,   $  and suppose     
\begin{align} 
\label{4.36}   
\max ( \| f \|_{\infty}, \| g \|_{\infty}, \| q \|_{\infty},  \| f \breve \|, \| g \breve \|, N_{k+1}^{-1} \| q \breve \| )   \leq M 
\end{align}
  where $ M = M ( N_1, \dots, N_{k}) $ is a constant  and  $  \Ph_{k+1} $ is  $p$-harmonic in  $ B (0, 1) $ 
  with Lipschitz  continuous boundary values  and  $ \Ph_{k+1} (0) = 0. $  
  Next   apply  Lemma \ref{lem4.1}  with  $ M $ as in 
    \eqref{4.36} and  $  \ep = 2^{-(k+1)} $ obtaining $ A = A_k $  and $  \nu_0 $ 
    so that \eqref{4.1}-\eqref{4.4} are valid.  We also choose  $ N_{k+1} > \nu_0 $ 
    and so that $ A_{k} N_{k+1}^{- \al }  <  \frac{1}{2} A_{k-1} N_k^{- \al}  $      
        where $  \al = 1 - p/2 $ if $ p < 2$ and $ \al = 1  - 2/p$ if $ p > 2.$   
        By induction we now get  $ \si $ as in 
        Lemma \ref{lem4.3} or  Lemma \ref{lem4.4}. Then                      
\begin{align} 
\label{4.37}   
| \hat \si_{j+1}  ( r e^{i\he} )  -  \hat \si_j ( r e^{ i \he} ) | < 2^{ - (j+1) }  
      \quad  \mbox{ when }  r < 1 - A_{j} N_{j+1}^{-\al},  \,        
\end{align}  
and      
\begin{align}
\label{4.38}   
| \hat \si_{j+1}  ( r e^{i\he} )  -  \si_j (  e^{ i \he} ) | < 2^{ - j }  + |a_{j+1}| \quad   \mbox{ when }  r  > 1 - A_{j} N_{j+1}^{-\al} \,  .     
\end{align}
  From   \eqref{4.37}   we see that $ (\hat \si_{j+1}) $  converges  uniformly  on compact 
  subsets of $ B (0, 1) $   to  a  $p$-harmonic  function  $  \ti   \si$   satisfying  
\begin{align}
 \label{4.39}   
 | \ti \si  ( r e^{i\he} )  -  \hat \si_k ( r e^{ i \he} ) | < 2^{ - k }      \quad    \mbox{ when }  r < 1 - A_{k} N_{k+1}^{-\al}.  
\end{align}
Using  \eqref{4.37}, \eqref{4.39},   and the triangle inequality we also have  
for $ 1 -  A_{k}  N_{k+1}^{-\al} < r  <     1 -  A_{k+1}  N_{k+2}^{-\al}    $    that      
\begin{align}
\label{4.40}   
\begin{split}
|\ti \si (r e^{i\he} )  -   \si_k  ( e^{ i \he})|  &\leq     | \ti \si (re^{i\he})  -   \hat \si_{k+1} ( r e^{i\he})|   +  |   \hat \si_{k+1} ( r e^{i\he} )   -   \si_k  (  e^{ i \he} ) |  \\ 
&  < 2^{ - (k+1)} +  2^{-k}  +  |a_{k+1}| .
\end{split}
\end{align} 
          From \eqref{4.40}  and  our choice of  $ (a_k) $  we see for $  (\si_k) $  as in Lemma \ref{lem4.3} that  
          $\ds \lim_{r \to 0}   \ti \si ( r e^{i \he} ) $  does not exist for  almost  every  $  \he  \in [-\pi, \pi]$  while 
          if $ (\si_k) $  is as in Lemma \ref{lem4.4},   $\ds \lim_{r \to 0}   \ti \si ( r e^{i \he} ) = 0 $  almost everywhere. 
                   Moreover from  boundedness of  $ (\si_k)   $  and  the maximum principle for $p$-harmonic  functions we deduce that 
          $  \ti \si $ is bounded in  Lemma  \ref{lem4.3} or  \ref{lem4.4},  as well as  non-negative in Lemma  \ref{lem4.4}.     
          To conclude  the proof of  Theorem \ref{thmB},  put  $  \ti \si =  \hat u $  and  $ \ti \si = \hat v $ if   
          Lemma  \ref{lem4.3} and  Lemma  \ref{lem4.4}, respectively,  was used to construct $ \ti \si. $  
\end{proof} 

\section{Proof of Theorem \ref{thmC}} 
\label{sec5}  
\setcounter{equation}{0} 
\setcounter{theorem}{0}

In this section we  use Theorem \ref{thmA} to  prove Theorem \ref{thmC}. Except for minor glitches 
we shall  essentially copy  the proof in  \cite{LMW}. Once again     Lemma 4.1  plays an  
important role in the estimates.     Let $  (N_j)_1^{\infty} $   be a sequence  
of positive integers  with  $ N_1 = 1$  and  with $ (N_j)_2^{\infty}$  to be chosen later
 in order to satisfy several conditions.  For the moment  we assume only that 
    $ N_{j+1}/N_j  \geq  2.$    Let   $  \Ph_j  $  for $  j = 1, 2, \dots $ be the  $p$-harmonic function in $ B (0, 1) $  
    with  period $ 2 \pi/N_j $  constructed in   Theorem \ref{thmB}    with  $  \Ph_j = V. $     
    Following  \cite{LMW},  we assume as we may,   that 
\begin{align}  
\label{5.1}    \| \Ph_j \|_{\infty}  \leq  1/2   \quad \mbox{and} \quad   \int_{-\pi}^{\pi}  \log ( 1 + \Ph_j ) ( e^{i\he}) d \he    \geq  c_2^{-1}
\end{align}
      for $ j = 1, 2, \dots, $  where $ c_2 \geq  1  $  depends only on $p. $   
      Indeed otherwise,  we replace  $ \Ph_j  $  by  $ \ti \Ph_j = c^{-1}    \Ph_j $ 
      and observe from Theorem \ref{thmA}, elementary facts about power series that for  $ c > > c_1,  $    
\[  
\int_{-\pi}^{\pi}  \log (  1 +  \ti  \Ph_j (e^{i \he} ) ) d \he   \geq   \int_{-\pi}^\pi  \ti \Ph_j ( e^{i\he} )  d\he   -   2\pi  ( c_1/c)^2  \geq  ( 2 c_1 c )^{-1}. 
\]   
Thus we assume \eqref{5.1} holds. 
   We  claim that there exists a positive integer $ \kappa  > > 1 $   and a 
   positive constant $  C  = C ( \kappa)  >  1 $  such that     
   for $ j = 1, 2, \dots, $ 
\begin{align}
    \label{5.2}       
    \sum_{l = 1}^{\kappa} a_{l j}  \geq C^{-1}  \quad \mbox{and}\quad   \prod_{l=1}^{\kappa} ( 1 + a_{lj} )  > 1 +  C^{-1} 
\end{align}
where    for $ l = 1,  \dots, \kappa$,
\[
a_{lj} : =   \min \{ \Ph_j ( e^{i \he/N_j } )  : \, \, \he  \in  [ -  \pi   +  \frac{(2l - 2) \pi}{ \kappa },    -  \pi   +  \frac{2l  \pi}{ \kappa }]\}.
\]
   To prove  \eqref{5.2}, let  $\ph_j ( e^{i\he} )  =  \Ph_j  ( e^{i\he/N_j } )$ for $\he \in \re. $  
Then from Theorem  \ref{thmA} we see that   $ \ph_j $ is continuous  and $ 2\pi $ periodic on $ \re $ with  
$  \| \ph_j   \breve \|   \leq c_1,  $  where $ c_1 $ depends only on $p.$    Using these facts we get 
   
 \[  
 2 \pi    \kappa^{-1}    \sum_{l=1}^{\kappa} a_{lj} \geq   \int_{-\pi}^{\pi}  \ph_j ( e^{i\he} ) d \he  
-  \hat c \,  \kappa^{-1}   \geq  \frac{1}{2 c_1} > 0   
\] 
for $ \kappa $ large enough  thanks to  \eqref{1.3} $(c)$ .   Likewise,   from Theorem \ref{thmA} and \eqref{5.1}  it follows that
 \[  
 2 \pi       \sum_{l=1}^{\kappa} \log (  1  +  a_{lj} )  \geq   \kappa  \int_{-\pi}^{\pi} \log ( 1 +  \ph_j ) ( e^{i\he} ) d \he  -   c'  \,    >  \frac{\kappa }{2 c_2}   \,   
 \]  
 for  $ \kappa $ large enough where $ c_2 $   depends only on $p.$      
 Dividing    this inequality  by $ 2 \pi $  and exponentiating    we get the 
 second inequality in  \eqref{5.2}.    Hence  \eqref{5.2} is valid. 
From   \eqref{5.2}     we deduce for  $ j = 1, 2, \dots, $  the existence of  $  \La$ and $\ti N_0$   so that  
\begin{align} 
\label{5.5}  
\begin{split}
&  (a)  \hs{.2in} {\ds   1 <   \La <    ( 1 +  C^{-1} )^{ 1/ \kappa}  < \prod_{l=1}^{\kappa} ( 1 + a_{lj} )^{1/\kappa}}, \\
&  (b)  \hs{.2in}  3^{ - \ti N_0}    <  {\ds \min_{j}   \left[ 1 + \max_{1\leq l \leq \kappa} a_{lj}  -  \La,\,     ( c_1 \kappa)^{-1}     \right]}.
\end{split}
\end{align}
Fix $ \kappa $  subject to the above  requirements.  For $  \he \in \re$ and $k=1, \ldots, \kappa$, we let      
\begin{align}
 \label{5.6} 
 \begin{split}
q_1^k ( e^{i\he} )    :=  \Ph_1 ( - e^{ i  ( \he +  2k\pi/\kappa) })\quad \mbox{and} \quad  f_1^k ( e^{i\he}  )  :=  1  +  q_1^k ( e^{i\he} ).  
\end{split} 
 \end{align}
 Moreover,  for $\he \in \re$,  $ j = 2, 3, \dots, $  and  $  k = 1, \dots, \kappa$  set    
\begin{align} 
\label{5.7} 
q_j^k ( e^{i\he} )   : =  \Ph_j ( - e^{ i  ( \he +  2k\pi/ \kappa) }  )  \quad \mbox{and}\quad  f_j^k ( e^{i\he}  )  :=  ( 1  +  q_j^k ( e^{i\he} ) )  f_{j-1}^k  (e^{i\he} ).  
\end{align}
   Observe from  \eqref{5.1}, \eqref{5.5}, \eqref{5.7} that         
\begin{align} 
\label{5.8}    
\prod_{k=1}^{\kappa}   f_j^k ( e^{i\he} )   =   \prod_{l=1}^j \prod_{k=1}^{\kappa}  \left( 1  + \Ph_l  ( -  e^{ i ( \he +  2k\pi/\kappa) } ) \right)  > \La^{\kappa j}.
\end{align}   
Let 
\begin{align} 
\label{5.9} 
E_k :=  \{ e^{ i \he}  \in  \ar B ( 0, 1)  :  f_j^k ( e^{i\he} ) >  \La^j  \mbox{ for infinitely many }  j    \}.  
\end{align}
 From \eqref{5.8} we see that 
\begin{align}  
\label{5.10}    
\bigcup_{k = 1 }^{\kappa}  E_k    =  \ar B ( 0, 1). 
\end{align}
From  \eqref{5.10} we conclude that to  finish the proof of Theorem  \ref{thmC}  it suffices to show $k = 1, \dots, \kappa$ that 
\begin{align}    
\label{5.11}  
\om_p (0,  E_k ) = 0, \quad  \om_p (0,  \ar B ( 0, 1) \sem E_k ) = 1, \quad \mbox{and} \quad | \ar B ( 0, 1 ) \sem E_k | = 0  
\end{align}
where $ \om_p $ is defined after \eqref{1.4}.  To do this we   use   Lemma  \ref{lem4.1}   
and  an inductive type argument to  choose $  (N_j)_2^{\infty} . $  First   we  require that  
$ N_1  = 1$  and   $   N_{j+1}/N_j $   is divisible  by  $ \kappa $  for $ j =1, 2, \dots $. 
Second  for fixed $k$   and $ j = 1, 2, \dots $   we apply  Lemma \ref{lem4.1}   
with   $  f = g =  f^k_j$ and  $q = q^k_{j+1}$  From \eqref{5.1}  and Theorem \ref{thmB} 
    we see that     $ \| q^k_j \|_{\infty}  \leq 1/2,  $ and $ \| q^k_j  \breve \| \leq c _1 N_j $ for  $ j = 1, 2, \dots $  Thus, 
\begin{align}
\label{5.12}     
2^{-j}  \leq  \| f^k_j \|_{\infty} \leq (3/2)^j  \quad \mbox{and} \quad  \| f^k_j  \breve \|  \leq  c_1 \, 2^j   N_j .
\end{align}
     Let   $   M_j  =  c_1  4^j  N_j $  and  $  \ep  =  \ep_j  =  3^{-j - 1}. $   
     Then  there exists   small $ A_j = A_j (p,  \ep_j, M_j ), $   and large  $ \nu_0  (p, \ep_j,  N_j )  $ such that  if  $ N_{j+1} > \nu_0, $ then  
\begin{align}
\label{5.13}    
|  \hat f^k_{j+1} ( r e^{ i \he} )  -  f^k_{j} ( e^{i\he} ) ( q_{j+1} ( r e^{i\he} )  + 1 )  | < 3^{- (j+1) } \quad \mbox{for} \, \, 1 - A_j  N_{j+1}^{- \al} < r  < 1
\end{align}
and
\begin{align} 
\label{5.14}
|  \hat f_{j+1} (r e^{i \he} )    -   \hat  f_j (r e^{i\he} |   <  3^{- (j+1)} \quad \mbox{for}\, \,   r  \leq   1      -  A_j  N_{j+1}^{-\al} .
\end{align}
   Now   using    \eqref{2.3a}   as in the  derivation of  \eqref{4.4}  from  
     \eqref{4.3}   we see that   we may also  assume 
\begin{align}  
\label{5.15}     
|  \hat f_{j} (r e^{i \he} )    -   \hat  f_j ( e^{i\he} |   <    
    3^{- (j+1)} \quad \mbox{for}\, \,   r   \geq    1      -  A_j  N_{j+1}^{-\al}.
    \end{align}
  Finally,  we  may  choose  $  (A_j)$ and $( N_{j})  $      so that              
\begin{align}  
\label{5.16} 100 \, N_{j+1}^{-1}   <   t_j   =  A_j N_{j+1}^{-\al}  <  (c_1 N_j 6^{j+1} )^{-1} \quad \mbox{and}\quad      t_{j+1}  <   \frac{t_j}{  \kappa}
\end{align}
for $j=1,2, \ldots,$.   From  \eqref{5.14} and \eqref{5.16},    we deduce    for  $ m > j, $  a positive integer, and for $ k =1,2, \dots, \kappa, $  that  
\begin{align} 
\label{5.17}     
|  \hat f^k_m (r e^{i \he} )    -   \hat  f^k_j (r e^{i\he} |   \leq 3^{- j} \quad \mbox{for}\, \,   r  \leq   1     -  t_j  . 
\end{align}
  From \eqref{5.17} and Lemmas \ref{lem2.1} - \ref{lem2.3}  we  obtain that  
   $ \hat f^k_j$ and  $\nabla  \hat f^k_j$  converge uniformly as  $ j \to \infty $  to  a   locally  $p$-harmonic  $ \hat f^k, \, \nabla \hat f^k,  $   on compact subsets of  $ B (0, 1)$ satisfying  \eqref{5.17} with $ \hat f^k_m $ replaced by  $ \hat f^k.$    Also  from  \eqref{5.13},    \eqref{5.1},  and  \eqref{5.17}  with $ j $ replaced by $ j + 1, $    it follows that 
\begin{align} 
\label{5.18}       
\hat f^k_m (r e^{i \he} )   \geq   \frac{1}{2}  \hat  f^k_j (r e^{i\he} )  -  3^{- j} \quad \mbox{for}\, \,    1 - t_j  \leq   r  \leq   1   -  t_{j+1}   . 
\end{align}    
 Next   for  fixed  $ k$, $1 \leq  k  \leq \kappa, $  let   $  G_j^k  =  \{ e^{i\he}:  f^k_j ( e^{i\he} ) >  \La^j \} . $  
 Then   
 \begin{align}  
 \label{5.19}  
 E_k   =  \bigcap_{n=1}^{\infty}    \left( \bigcup_{j=n}^{\infty} G^k_j  \right) \mbox{ where $ E_k $ is  as in   \eqref{5.9}.} 
\end{align} 
   By monotonicity of  $p$-harmonic measure  it  suffices  to  show that 
\begin{align}  
\label{5.20}  
\om_p  \left( 0,   \bigcup_{j=n}^{\infty} G^k_j  \right)    \leq  \ti C  \La^{-n} \quad \mbox{for}\, \, n = 1, 2, \dots  
\end{align}
 where $ \ti C  \geq 1 $  does not depend on $n.$   
Moreover, from   Theorems 11.3-11.4  and  Corollary  11.5 in  \cite{HKM}  applied to   $   \om_p  \left(  0, \ar B ( 0, 1 )  \sem \bigcup_{j = n}^N  G^k_j \right)$ we  see  that  
   \[    
\lim_{N \to \infty}  \om_p \left ( 0,  \bigcup_{j = n}^N  G^k_j  \right) = \om_p  \left( 0,   \bigcup_{j=n}^{\infty} G^k_j  \right).
\]   
Therefore,  instead of  proving  \eqref{5.20},   we  need only show that 
\begin{align} 
\label{5.21}  
\om_p  \left( 0, \bigcup_{j=n}^{ N } G^k_j  \right)    \leq \ti C  \La^{-n} \quad   \mbox{for}\, \,  N  > n    
\end{align}
 in order to  conclude    
       that $ \om_p ( 0, E_k ) = 0. $ This  conclusion  and  Theorem   11.4 in  \cite{HKM}  then yield  
  $  \om_p ( 0, \ar B ( 0, 1)\sem E_k ) = 1 $      for   $ k = 1, 2, \dots, \kappa .$   
  
  To prove  \eqref{5.21} we temporarily drop the $k$ and write  $ f_j, G_j $ for $  f_j^k, G_j^k. $  Let 
  \[ 
  H_j :=   \bigcup \{  I \subset   \re  : I  \mbox{ is a closed interval of  length }  t_j,    \max_{\he \in I} f_j (e^{i\he}) 
\geq \La^{j}  -  3^{-j - 1} \} 
\]  
and let  $\mathring H$  denote the interior of $ H $ relative to $ \re.$    Clearly, 
\begin{align}  
\label{5.22}     f_j  (e^{i\he})   <  \La^j   -  3^{-(j+1)} \quad \mbox {if}\, \,  \he  \in H_j  \sem \mathring H_j. 
\end{align}
   Hence   
$ \{ \he : e^{i\he} \in   \bar  G_j \}  \subset \mathring H_j. $  From   \eqref{5.12},  \eqref{5.16} we see that 
\begin{align} 
\label{5.23}  | f_j (e^{i\he_1} )   -  f_j (e^{i\he_2} )  |  \leq c_1  2^{j}  t_j  \leq 3^{- j} 6^{-1} \mbox{ if }  
     | \he_1 - \he_2 |  \leq t_j .
\end{align}     
Thus  
\begin{align} 
\label{5.24}  
\min_{\he \in H_j}  f_j (e^{i\he} ) \geq \La^{-j}  -  3^{-j} 2^{-1}.
\end{align}
  Let 
\[
T_j  =  \bigcup \, \{ I \times [0, t_j] : I \in H_j \}  \subset   \mathbb{\bar R}_+^2 \mbox{ for }  j  = 1, 2, \dots  
\]  
Using  \eqref{5.24}, \eqref{5.15}, \eqref{5.16},  we conclude   that 
\begin{align}  
\label{5.25} 
\hat f_j ( r e^{i \he} )  >   \La^{j}  - 3^{-j}   \quad \mbox{if}\, \,   ( \he, 1 - r ) \in T_j .
\end{align}
     At this point the authors in \cite{LMW} note that  if it  were true  that   
     \[  
     \hat f_N ( re^{i\he} )  >  \bar  C^{-1} \La^j   \mbox{ for $ ( \he, 1 - r ) $  in the  closure of  $ \rn{2}_+  \cap    \ar  T_j    $  for    $ N  \geq j \geq n >   \ti N_0, $ }  
     \]   then it  would follow from the  boundary  maximum principle for  $p$-harmonic  functions  applied to   
    $ \bar   C  \La^{- n}  f_N $  in   
\[   
B ( 0, 1) \sem \{ r e^{i\he} :  ( \he,   1 - r) \in  \bigcup_{j=n}^N  \bar T_j \} 
\]   
and  convergence of     
     $ (\hat f_j) $  to  $  \hat f $   that \eqref{5.21} is valid.       
     Unfortunately, this  inequality need not hold so the authors modify 
     the components of  $ T_j $  as follows.      Observe that $ T_j $  
     has a finite number of  components having a non-empty intersection with  $ [- \pi, \pi ]. $   
     If $ Q = [a, b] \times [0, t_j] $ is one of  these components then  
\begin{align} 
\label{5.26}
 f_j ( e^{ia} )  , f_j ( e^{ib} )  <  \La^j  - 3^{- j - 1} \mbox{ thanks to \eqref{5.22}.}
\end{align}
     If   $ {\ds \max_{ \he \in [a, b]}}  f_j ( e^{i\he} )  \leq  \La^j, $  
     then from \eqref{5.26} and the definition of  $ G_j $    we deduce that   
\[
\{e^{i\he} : \he \in [a, b] \} \cap \bar G_j = \es
\]  so in this case  put  $ Q^* = \es.$    
Otherwise 
${\ds  \max_{ \he \in [a, b]}}  f_j ( e^{i\he} )   >  \La^j, $  and 
from    \eqref{5.12}, \eqref{5.16}, \eqref{5.26},\eqref{5.25},   
we see that if $ I_j^Q =   [a, a + t_j ]$  and  $  J_j^Q  = [b - t_j, b ], $  then    
\begin{align} 
\label{5.27}  
\La^j  -  3^{- j }         <  \,  f_j ( e^{i\he} )  \,  <  \La^j  - 3^{- j - 2} \quad \mbox{on}\, \,   I_j^Q \cup J_j^Q. 
\end{align}
  We note from  \eqref{5.16}  that  $ N_{j+1} t_j > 100$   and  $ q_{j+1} (e^{i\he} )   $ is 
$  \frac{2\pi}{N_{j+1}} $  periodic in  $ \he $   so from the definition of  $ (a_{l(j+1)})$   
and \eqref{5.16}  we can  find intervals 
 $ I_{j+1}^Q$ and $J_{j+1}^Q $  with 
\begin{align} 
\label{5.28}   
I_{j+1}^Q \subset  I_{j}^Q ,    J_{j+1}^Q \subset  J_{j}^Q,  \mbox{ and }  \max_{I_{j+1}} q_{j+1},   \,    \max_{J_{j+1}} q_{j+1}  \geq   \max_{1\leq l \leq \kappa} \,  a_{l(j+1)} .  
\end{align}
Moreover  $  I_{j+1}$ and $J_{j+1} $  each have length  $ t_{j+1}. $ Then  from \eqref{5.5} $(b)$ 
and \eqref{5.28} we get  for  $ L =  I_{j+1}$  or $ J_{j+1} $ 
 and $  j  > \ti N_0 $ that       
\begin{align} 
\label{5.29}  
\max_{L} f_{j+1} ( e^{i\he} )  \geq ( 1 +  \max_{1\leq l \leq \kappa} a_{l(j+1)} ) ( \La^j  - 3^{-j} )   >   \La^{  j + 1}. 
\end{align}
  From  \eqref{5.29}, \eqref{5.12}, \eqref{5.16},  with  $ j $ replaced by  $ j  + 1, $ 
 we  deduce  that 
\begin{align}
\label{5.29a}   
\min_{L} f_{j+1} ( e^{i\he} ) \geq \La^{j+1}  - 3^{- j - 2}.
\end{align}
   We  can   now  argue by induction    to get   nested closed intervals 
   $ ( I^Q_l )_j^{\infty}$ and $( J^Q_l )_j^{\infty}, $  for which  
 $  I^Q_l $ and $ J^Q_l$   have  length $ t_l$   and   \eqref{5.29},   \eqref{5.29a}, 
 are valid  with  $ j + 1 $ replaced by $ l. $   Then     
\begin{align}
\label{5.30}   
a  <  a^*  :=   \bigcap_{l=j}^{\infty} I_l^{Q} \quad \mbox{and}\quad  b^* :=      \bigcap_{l=j}^{\infty}  J_l^Q  < b.
\end{align}
 Set   $ Q^*  = [a^*, b^*] \times [0, t_j] $  and 
 \[  
 T_j^* =  \bigcup \{ Q^* :  Q \mbox{ is a component of $ T_j$} \}.   
 \]    
 Then by  construction and \eqref{5.27}
 \[ 
 \bar G_j \subset \mathring H_j^*  \subset  H_j^* = \ar T_j^* \cap \re .    
 \] 
 Finally the  authors show   
  \begin{align} 
  \label{5.31}  
  \hat f_N ( re^{i\he}) > \frac{1}{3} \La^j  \mbox{ for $ ( \he, 1 - r)$ in }  \ar T_j^*  \sem  \mathring H_j^* \mbox{ and } N \geq j. 
  \end{align}
 For $ N = j $ this inequality is implied by  \eqref{5.25} while if $t_{j+1}  \leq  1 -  r   \leq  t_{j} $ 
 we see from 
  \eqref{5.18} and  \eqref{5.25} that \eqref{5.31} is valid  for 
  $ ( \he, 1 - r ) $  in   $ \ar T^*_j  \cap [t_{j+1}  \leq 1 -  r \leq  t_{j}] .$  
   The  only remaining segments  of  $ \ar T_j^* \cap \rn{2}_+ $      are of the 
   form  $  \{a^*\}  \times  [0, t_{j+1}],   \{b^*\}  \times  [0, t_{j+1}], $ where $ a^*, b^* $  are as in  \eqref{5.30}.                     
 If  $  (\he, 1 - r ) \in  \{a^*\} \times  [ t_{l+1}, t_{l}] $  or  $  \{b^*\} \times  [ t_{l +1}, t_{l}] $ for $ j+1 \leq l < N $  
 we can   use    \eqref{5.18} with $m = N,  j = l, $ \eqref{5.15} with $ j = l, $  
 and  \eqref{5.29a} with $ j + 1  = l$  to get  that \eqref{5.31} is valid on
   $ \ar T^*_j  \cap [  t_{l+1}  \leq 1 - r \leq   t_{l}] .$     
   If  $  (\he, 1 - r ) \in  \{a^*\} \times  [ 0, t_{N}] $  or  $  \{b^*\} \times  [ 0, t_{N}] $   
    then  from  \eqref{5.15}  with $ j = N $  and  \eqref{5.29a} with  
 $ j + 1  = N,  $  we   obtain \eqref{5.31}  on    
 $ \ar T^*_j  \cap (0 <  1 - r \leq   t_{N}] .$   
 Thus \eqref{5.31} is valid and from the discussions after  \eqref{5.25},   
 \eqref{5.21}, we  conclude   $  \om_p (0,  E_k ) = 0 $ for $ 1 \leq k \leq \kappa. $   
 
 It remains  to   prove  $ | \ar B (0, 1) \sem E_k |  = 0$ for $1 \leq k  \leq  \kappa. $  
 To do this,  for  $ j = 1, 2, \dots,$  let  $  \tau_j (e^{i\he} )  = \log( 1 + a_{lj})    $ 
 when $  \he \in   [ - \pi + \frac{(2l-2) \pi }{\kappa}, - \pi 
+  \frac{2l \pi}{\kappa}) $ and $ 1 \leq l \leq \kappa.$  We  regard  $ \tau_j $  as  a  $ 2 \pi $ 
periodic function on $ \re. $   For $ \he \in \re$,  $ j = 1, 2, \dots,$ and $k = 1, \dots, \kappa, $  let  
\[  
h^k_j  (e^{i\he} )  =  \tau_j (-  e^{i  (N_j \he + 2k\pi/\kappa)})   - m_j   \quad  \mbox{ where } \quad m_j =  \frac{2\pi}{\kappa} \sum_{l=1}^{\kappa} \log ( 1 + a_{l j}).  
    \]        
    Then for fixed $k$,    $h^k_{j+1}  $ is  $ 2 \pi/N_{j+1}  $  periodic  and has  average 0   on intervals  where  $ h^k_j $ is constant  since  
$ N_{j+1}/N_j $ is divisible by $ \kappa. $   Thus  for fixed $k, $   the  
functions        $ h_j^k ( e^{i\he} )$  are orthogonal
 in  $ L_2 ( \ar B (0, 1) ) $  and also  uniformly bounded for $j = 1, 2, \dots$.   Using this fact one can 
show  (see  page 182 in  \cite{KW})  that          
\begin{align} 
\label{5.32}    
\sum_{l = 1}^j   h_l^k   =  O (j^{3/4})  \mbox{ for almost every $ e^{i\he} \in \ar B (0, 1) $ }
\end{align}
 with respect to Lebesgue measure on $  \ar B ( 0, 1). $ 
  Since   
  \[  
  \log f_j^k  \geq   \sum_{l=1}^j  ( h_l^k  +   m_l ) 
  \] 
  it follows from \eqref{5.5} $(a)$, \eqref{5.32} that for 
  almost every $ e^{i\he} \in \ar B ( 0, 1) $ there exists $ j_0 ( \he ) $ 
  such that  for $ j \geq j_0, $        
$ f_j^k ( e^{i\he} ) >  \La^j $ .  From the definition of $ E_k $  
we arrive at  $  | \ar B ( 0, 1) \sem  E_k | = 0 $ for $ 1 \leq j \leq k. $

\section{Closing Remarks} 
\label{sec6}  
\setcounter{equation}{0}  
\setcounter{theorem}{0}     
We  note  that  in \cite[section 4]{LMW},  the  authors  discuss  some  interesting  open  questions   for  $p$-harmonic measure.   
Theorems \ref{thmA}, \ref{thmB}, \ref{thmC}   were  inspired  by  these  questions.     
One  natural  question  is  to  what  extent     Theorem \ref{thm1.1}  or  Theorem \ref{thmB}
has  an  analogue   in other domains?   For     example,  can  one   prove  similar  
theorems  in     the  unit  ball,  say   $ \breve B ,$   of    
$  \rn{n}  =  \{ x =  (x_1, \dots,  x_n) :   x_i \in \re,  1 \leq i \leq n\}, $   $   n  \geq 3,  $  
when $ 1 < p < \infty,  p  \not = n$?     For  $ p  = n, $    
one  can map $  \rn{n}_+ =  \{ x \in \rn{n} : x_n > 0 \} $ by way  of  a  linear  fractional  transformation,   
conformally    onto   $ \breve B $  and  use invariance of  
the  $ n$-Laplacian under conformal  mappings  to  
conclude  that   the conclusion in  Theorem \ref{thm1.1}   extends to    $  \breve  B.  $      
Theorems \ref{thmB} and \ref{thmC}   generalize  to     $  B (0, 1 ) \times  \rn{n-2},   n   \geq 3,   $  
by  adding  $ n - 2 $  dummy  variables.  
We note that for  $ p >2, $  the Martin function  in  $ \rn{n}_+  =   \{  x \in \rn{n} : x_n > 0 \}  $   relative to 
 0 is  homogeneous  of  degree   $  - \la $  where  $ 0 < \la  <  N - 1 $  as follows from  
 Theorem 1.1  in 
   \cite{LMTW}  (see also  \cite{DB1}).    Using  this  fact  one  can   
   construct examples in $ \rn{n}_+ $ satisfying  Theorem \ref{thm1.2}  similar to 
   the hands on examples  constructed in  $ \rn{2}_+. $  
     
Another  interesting question  is   whether   the   set in  Theorem \ref{thm1.1}  or  
Theorem \ref{thmB}  where radial  limits  exist  can have  Hausdorff  dimension  $ <  1?$    
 This  set  has  dimension   $ \geq    a = a (p)  > 0 $   thanks  to  work  of   \cite{MW}  and 
\cite{FGMS}.  

 Also   an   interesting question to us is whether Theorem \ref{thm1.1} or  Theorem \ref{thmB}  have    
 analogues for  solutions to  more  general  PDE  of  $p$-Laplace type.  
 To  give  an  example,   given $  p$,  $1 < p < \infty, $    suppose  
 $ f : \rn{n} \sem  \{0\}    \to  (0, \infty) $  has  continuous  third  partials on $ \rn{n} \sem \{0\} $  with 
\begin{align}
\label{6.1}
\begin{split}
(a)&\, \,   f  (  t   \eta )  =  t^p   f ( \eta ) \quad \mbox{when}\, \,   t  > 0 \, \, \mbox{and}\, \,  \eta \in  \rn{n},
\\
(b)& \, \,   \mbox{There exists $  \ti a_1 = \ti a_1 (p)   \geq 1 $  such that if  $ \eta, \xi   \in  \rn{n} \sem \{0\},$  then } \\
&  \hs{.44in}      \ti a_1^{-1}  \,   | \xi  |^2     | \eta |^{p-2}  \,  \leq   \,  
\sum_{i, j = 1}^n  \frac{ \ar^2  f }{ \ar \eta_i \ar \eta_j}  ( \eta )   \,  \xi_i   \,  \xi_j     \leq             \ti a_1  \,  | \xi  |^2     | \eta |^{p-2}.    
\end{split}
\end{align}
 Put    $  \mathcal{A} = \nabla f  $   for fixed $ p > 1. $ Given an   open set 
 $ O  $ we say that $v  $ is $  \mathcal{A}$-harmonic in $ O $ provided $ v \in W^ {1,p} ( G ) $ for each open $ G $ with  $ \bar G \subset O $ and   
	\begin{align}
	\label{6.2}
		\int \lan    \mathcal{A}
(\nabla v(y)), \nabla \he ( y ) \ran \, dy = 0 \quad \mbox{whenever} \, \,\he \in W^{1, p}_0 ( G ).
			\end{align} 
Note that if   $ f (\eta  )  =  | \eta |^p$  in \eqref{6.1} then $ v  $  as in \eqref{6.2} is $p$-harmonic in  $ O.$      
Also observe that if  $ v $  is $ \mathcal{A} = \nabla f $-harmonic   
in  $ \rn{2}_+ $ then   $  \ti  v  (  z  )  =  v ( N z  + z_0 )  $  is also
  $  \mathcal{A} = \nabla f $  harmonic in $ \rn{2}_+ $  for  $ z, z_0  \in \rn{2}_+  $ and $ N \in \re.$     
  As  mentioned  earlier    Wolff made   important use of    similar    translation,  
  dilation  invariance  for   $ p $-harmonic functions.   Thus we believe  Theorems \ref{thm1.1}  
  stands a good  chance of  generalizing to the $  \mathcal{A} $-harmonic  setting.  
  On the other  hand     we  made  important use of  rotational  invariance of   
  $ p $-harmonic  functions in our  proof  of  Theorem \ref{thmB}.   Since  this  invariance
   is  not  true in general  for   $ \mathcal{A} = \nabla f $-harmonic  functions
     on  $  B (0, 1 ), $  an  extension of  Theorem  \ref{thmB}  to   
the $ \mathcal{A}$-harmonic setting would  require new techniques.

Finally, we  note that   in   a  bounded  domain  $ D \subset \rn{n}  $ with $ 0 \in D $   
and  for  $ p  = 2 $   one can  show  $  \om_2 (  \cdot ) $ (known as harmonic measure)  is  a  positive  Borel  measure  on  $ \ar D$,  
associated with  the  Green's function of $ D $  having  a  pole  at    $ 0. $   
This notion of  harmonic measure  led  the  authors  with  various  co-authors   in   \cite{BL,LNP,L2,LNV,ALV} 
to    study     the  Hausdorff  dimension of     a positive Borel  measure with support in  $  \ar D,  $  
associated with a  positive   $ p$-harmonic  function  defined  in   $  D \cap N $ and   
with  continuous  boundary value  0 on $ \ar  D. $  Here $ N $  is an open  neighbourhood of  $  \ar  D.$     
Moreover,  many of   the  dimension results we obtained for these ``$p$  harmonic  measures''  
in the above papers  were  also  shown in  \cite{Akman,ALV1}  to  hold  for  
the  positive  Borel measures  associated with  positive
  $  \mathcal{A} =   \nabla  f $-harmonic functions in $ D \cap N, $ vanishing on  $ \ar D. $     

\nocite{*}
\bibliographystyle{amsbeta}
\bibliography{jobname}    

     \end{document}